\title{Marked length rigidity for one dimensional spaces}
\author{David Constantine\thanks{Department of Mathematics and Computer Science, Wesleyan University, 
265 Church St.,
Middletown, CT 06459. E-mail: \texttt{dconstantine@wesleyan.edu} } 
\and 
Jean-Fran\c{c}ois Lafont\thanks{Department of Mathematics, Ohio State University, 
231 West 18th Avenue, Columbus, OH 43210. E-mail: \texttt{jlafont@math.ohio-state.edu}}
}
\theoremstyle{definition}
\newtheorem{Def}{Definition}[section]
\theoremstyle{proposition}
\newtheorem{Lem}[Def]{Lemma}
\newtheorem{Prop}[Def]{Proposition}
\newtheorem{Claim}{Claim}
\theoremstyle{plain}
\newtheorem{Thm}[Def]{Theorem}
\newtheorem*{Theorem}{Main Theorem}
\newtheorem{Cor}[Def]{Corollary}
\begin{document}

\maketitle

\begin{abstract}

In a compact geodesic metric space of topological dimension one, the minimal length of a loop in a free homotopy class is well-defined, and provides a function $l:\pi_1(X) \longrightarrow \mathbb{R}^+\cup \infty$ (the value $\infty$ being assigned to loops which are not freely homotopic to any rectifiable loops). This function is the {\it marked length spectrum}. We introduce a subset $Conv(X)$, which is the union of all non-constant minimal loops of finite length. We show that if $X$ is a compact, non-contractible, geodesic space of topological dimension one, then $X$ deformation retracts to $Conv(X)$. Moreover, $Conv(X)$ can be characterized as the minimal subset of $X$ to which $X$ deformation retracts.
Let $X_1,X_2$ be a pair of compact, non-contractible, geodesic metric spaces of topological dimension one, and set $Y_i=Conv(X_i)$.  We prove that any isomorphism $\phi:\pi_1(X_1)\longrightarrow \pi_1(X_2)$ satisfying $l_2\circ\phi=l_1$, forces the existence of an isometry $\Phi:Y_1 \longrightarrow Y_2$ which induces the map $\phi$ on the level of fundamental groups. Thus, for compact, non-contractible, geodesic spaces of topological dimension one, the marked length spectrum completely determines the subset $Conv(X)$ up to isometry.

\end{abstract}

\section{Introduction.}

This paper is motivated by a long-standing conjecture concerning negatively curved manifolds: that the length of closed geodesics on a closed negatively curved Riemannian manifold determines the space up to isometry.  More precisely, in a closed negatively curved manifold $(M^n,g)$, there are unique geodesics in free
homotopy classes of loops. Assigning to each element in $\pi_1(M^n)$ the length of the corresponding minimal geodesic yields the function $l:\pi_1(M^n)\longrightarrow \mathbb{R}^+$, which is called the 
marked length spectrum.  The marked length spectrum conjecture states that, if we have a pair of negatively curved Riemannian metrics on $M^n$ which yield the same length function $l$, then they are in fact isometric.  In full generality, the conjecture is only known to hold for closed surfaces, which was independently established by Croke \cite{croke} and Otal \cite{otal} (see also Paulin and Hersonsky \cite{paulin-hersonsky} for some extensions to singular metrics on surfaces). In the special case where one of the Riemannian metrics is locally symmetric, this result is due to Hamenst\"adt \cite{ham} (see also Dal'bo and Kim \cite{dalbo-kim} for analogous results in the higher rank case).  


In this paper, we consider compact geodesic spaces of topological dimension one.  The starting observation is that these spaces share a lot of the properties of closed negatively curved manifolds.  In particular, they are aspherical (see Curtis and Fort \cite{curtis-fort2}), and have unique minimal length representatives in each free homotopy class of loops (by Curtis and Fort \cite{curtis-fort1}, also shown by Cannon and Conner \cite{cannon-conner}).  As such, it is reasonable to ask whether the marked length spectrum conjecture holds in the setting of compact geodesic spaces of topological dimension one.  We define a subset $Conv(X)$ of any compact geodesic space $X$ of topological dimension one. When $X$ is non-contractible, we show that $X$ deformation retracts to $Conv(X)$ (and the latter is the minimal subset with this property). We establish:

\begin{Theorem}\label{MainThm}
Let $X_1,X_2$ be a pair of compact, non-contractible, geodesic spaces of topological dimension one, and set $Y_i=Conv(X_i)$. Assume the two spaces have the same marked length spectrum, that is to say, there exists an isomorphism $\phi:\pi_1(X_1)\longrightarrow \pi_1(X_2)$ such that the following diagram commutes: 
\begin{displaymath}
	\xymatrix{ \pi_1(Y_1) \cong  \pi_1(X_1) \ar[rr]^{\phi} \ar[rd]_{l_1} & &\pi_1(X_2) \ar[ld]^{l_2} \cong \pi_1(Y_2) \\
				 &  \mathbb{R} & 
			}
\end{displaymath}
Then $Y_1$ is isometric to $Y_2$, and the isometry induces (up to change of basepoints) the isomorphism $\phi$.
\end{Theorem}

Let us provide an outline of the proof, with reference to the next section for appropriate definitions.  The idea behind the argument is to look at a certain subset of $Conv(X_1)$ consisting of {\it branch points.}  For a pair of branch points, we consider a minimal geodesic joining them, and construct a pair of geodesic loops with the property that they intersect precisely in the given minimal geodesic.  Now using the isomorphism of fundamental groups, we obtain a corresponding pair of geodesic loops in $Conv(X_2)$.  Using the fact that the lengths are preserved, we show that the corresponding pair in $Conv(X_2)$ likewise intersects in a geodesic segment, and that furthermore, the length of the intersection is exactly equal to the length of the original geodesic segment.  We then proceed to show that this correspondence is well-defined (i.e. does not depend on the pair of geodesic loops one constructs), and preserves concatenations of geodesic segments.  This is used to construct an isometry between the sets of branch points.  Using completeness, we extend this to an isometry between the closures of the sets of branch points.  Finally, we consider points in $Conv(X_1)$ which are {\it not} in the closure of the set of branch points. It is easy to see that each of these points lies on a unique maximal geodesic segment, with the property that the only branch points occur at the endpoints of the segment.  The correspondance between geodesic segments can be used to see that there is a unique, well-defined, corresponding segment in $Conv(X_2)$ of precisely the same length, allowing us to extend our isometry to all of $Conv(X_1)$. 

We conclude this introduction with a few remarks. If the spaces $X_i$ are tame (i.e. are semi-locally simply connected), then our {\bf Main Theorem} can also be deduced from some work of Culler and Morgan \cite{c-m} (see also Alperin and Bass \cite{alperin_bass}). But of course, there are numerous examples of geodesic length spaces of topological dimension one which are not semi-locally simply connected (Hawaiian Earrings, Menger curves, Sierpinski curves, etc.), for which our result does not {\it a priori} follow from theirs.  Another nice class of examples are Laakso spaces with Hausdorff dimension between one and two \cite{Laakso}.  These spaces have nice analytic properties, and work regarding the spectra of the Laplacian has been carried out on them \cite{romeo-steinhurst}.  In view of the connections between such spectra and the length spectrum in other contexts, this seems like a most interesting example.

\vskip 10pt

\centerline{\bf Acknowledgments.}

\vskip 10pt

A preliminary version of this paper was posted by the second author on the arXiv in 2003. Many thanks are due to J. W. Cannon, who found a gap in the original argument, and provided many helpful suggestions (in particular, the idea for Proposition \ref{closed}). The second author would also like to take this opportunity to thank the first author for resurrecting this project, which would otherwise have probably remained indefinitely in limbo.

\vskip 5pt

The second author was partially supported by the NSF, under grants DMS-0906483, DMS-1207782, and by an Alfred P. Sloan Research Fellowship.

\section{Preliminaries.}

In this section, we will define the various terms used in this paper, as well as quote certain results we will use in our proofs.  We start by reminding the reader of a few basic notions on length spaces (and refer to Burago, Burago and Ivanov \cite{burago} for more details on the theory).

\begin{Def}
A {\it path} in a metric space $(X,d)$, is a continuous map $f:[a,b]\longrightarrow X$ from a closed interval into $X$.  A {\it loop} in $X$ is a path $f$ satisfying $f(a)=f(b)$.  Observe that we can always view a loop as a based continuous map from $(S^1,*)$ to $(X,f(a))$.
\end{Def}

\begin{Def}
Let $(X,d)$ be a metric space.  The {\it induced length structure} is a function on the set of paths, denoted by $l_d$, and defined as follows:
    $$l_d({\bf \gamma}):=\sup \sum_{i=1}^n d({\bf \gamma}(x_{i-1}),
    {\bf \gamma}(x_i))$$
where ${\bf \gamma}:[a,b]\longrightarrow X$ is a path, and the supremum ranges over all finite collections of points $a=x_0<x_1<\ldots <x_n=b$.  A path $\gamma$ is {\it rectifiable} provided $l_d(\gamma)<\infty$.
We will often suppress the subscript and write $l(\gamma)$ if the original metric $d$ is clear from the
context. 
\end{Def}

Observe that rectifiability is preserved under finite concatenation of paths, and under restriction to subpaths. As a convenience, we will always parametrize rectifiable curves by their arclength.

\begin{Def}
Let $(X,d)$ be a metric space, and $l_d$ the induced length structure.  We define the {\it intrinsic pseudo-metric} $\hat{d}$ induced by $d$ as follows.  Let $p,q\in X$ be an arbitrary pair of points, and define the $\hat{d}$ distance between them to be $\hat{d}(p,q):=\inf l_d({\bf \gamma})$, where the infimum ranges over all paths ${\bf \gamma}:[a,b]\longrightarrow X$ satisfying $\gamma(a)=p$, $\gamma (b)=q$.
\end{Def}

Note that the function $\hat d$ actually maps $X\times X$ to $[0,\infty]$, where two points $p,q\in X$ have $\hat d(p,q)=\infty$ if and only if there are no rectifiable paths joining $p$ to $q$.

\begin{Def}\label{lengthspace}
Let $(X,d)$ be a metric space, $\hat{d}$ the corresponding intrinsic pseudo-metric.  We call $(X,d)$ a {\it length space} if $d=\hat{d}$ (in particular, $\hat d$ has image in $(0, \infty)$).
\end{Def}

\begin{Def}\label{geodspace}
Let $(X,d)$ be a length space.  We say that $(X,d)$ is a {\it geodesic space} if, for every pair of points $p,q\in X$, there is a path $\gamma_{p,q}$ joining $p$ to $q$, and having length precisely $d(p,q)$.  Such a path is called a {\it distance minimizer}. Note that such a curve is clearly locally distance minimizing.
\end{Def}

The usual terminology for such a path is geodesic.  However, we will use that term for paths which minimize length in their homotopy class (see, e.g. Definition \ref{geod} below), even though such paths need not be locally length-minimizing. In the literature, geodesic spaces are also sometimes referred to as {\it complete length spaces}.

\vskip 5pt

In topology, one of the most important concepts is that of dimension.  While there are many different notions of dimension, the one which will be of interest to us is that of {\it Lebesgue covering dimension.}
We remind the reader of the definition.

\begin{Def}
Let $X$ be a topological space.  We say that $X$ has topological (Lebesgue covering) dimension $\leq n$ if, for any open covering $\{U_i\}$ of $X$, there is a refinement $\{V_i\}$ with the property that every $x\in X$ lies in at most $n+1$ of the $V_i$.  We say that $X$ is $n$-dimensional if $X$ has dimension $\leq n$, but does not have dimension $\leq n-1$.
\end{Def}

We will denote the topological dimension of a space $X$ by $\dim(X)$.  Observe that any path connected topological space with at least two points has $\dim(X)\geq 1$.  The spaces we will be interested in are those satisfying $\dim(X)=1$.  Examples of such spaces are plentiful.  In particular, we have the following criterion (see Chapter VII in Hurewicz and Wallman \cite{hurewicz-wallman}):

\begin{Thm}
Let $(X,d)$ be a metric space, with Hausdorff dimension $\dim_H(X)$.  Consider the induced topology on $X$.  One has the inequality $\dim(X)\leq \dim_H(X)$.
\end{Thm}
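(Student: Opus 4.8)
The statement is the classical inequality (due to Szpilrajn) that Lebesgue covering dimension is dominated by Hausdorff dimension, and the plan is to prove it by a co-area induction. If $\dim_H(X)=\infty$ there is nothing to prove, so I assume $\dim_H(X)<\infty$. Then $\mathcal{H}^{s}(X)=0$ for any $s>\dim_H(X)$, and any such vanishing supplies, for every $\delta>0$, a countable cover of $X$ by sets of diameter $<\delta$; choosing one point per set gives a countable dense subset, so $X$ is separable. For separable metric spaces the covering dimension $\dim$ coincides with the small inductive dimension $\operatorname{ind}$ (a standard result, also in \cite{hurewicz-wallman}), so it suffices to bound $\operatorname{ind}(X)$. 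The core of the proof is the implication, for every integer $n\ge 0$, that $\dim_H(X)<n+1$ implies $\operatorname{ind}(X)\le n$; granting this and setting $n=\lfloor \dim_H(X)\rfloor$ yields $\dim(X)=\operatorname{ind}(X)\le n\le \dim_H(X)$, which is the theorem. I would prove the implication by induction on $n$.

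For the base case $n=0$, I use that the distance function $f=d(\cdot,x_0)$ from a fixed point is $1$-Lipschitz, so $\mathcal{H}^{1}(f(X))\le \mathcal{H}^{1}(X)=0$ (since $\dim_H(X)<1$); hence $f(X)\subseteq[0,\infty)$ is Lebesgue-null and its complement is dense. Thus for every $\varepsilon>0$ there is $r<\varepsilon$ with $S_r:=\{x: d(x,x_0)=r\}=\varnothing$, and then the ball $\{x:d(x,x_0)<r\}=\{x:d(x,x_0)\le r\}$ is a clopen neighborhood of $x_0$ of diameter $<2\varepsilon$. Since every point has arbitrarily small clopen neighborhoods, $\operatorname{ind}(X)\le 0$.

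The inductive step (the heart of the argument) treats $n\ge 1$. Fix $s$ with $\dim_H(X)<s<n+1$, so $\mathcal{H}^{s}(X)=0$. With $f=d(\cdot,x_0)$ as above, the key tool is the Eilenberg co-area inequality for Lipschitz maps of metric spaces, which gives a constant $C$ with
$$\int_0^{\infty}\mathcal{H}^{s-1}(S_r)\,dr \;\le\; C\,\mathcal{H}^{s}(X) \;=\; 0 .$$
Therefore $\mathcal{H}^{s-1}(S_r)=0$, so $\dim_H(S_r)\le s-1<n$, for Lebesgue-almost every $r$. Since $s-1<(n-1)+1$ and $S_r$ is again a separable space of finite Hausdorff dimension, the inductive hypothesis gives $\operatorname{ind}(S_r)\le n-1$ for almost every $r$. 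For each $x_0$ and $\varepsilon>0$ I choose such an $r<\varepsilon$; the open ball $B_r(x_0)$ then has small diameter and topological boundary $\partial B_r(x_0)\subseteq S_r$, whence $\operatorname{ind}(\partial B_r(x_0))\le n-1$ by monotonicity of $\operatorname{ind}$ on subspaces. Every point thus has arbitrarily small neighborhoods with boundary of inductive dimension $\le n-1$, which is exactly $\operatorname{ind}(X)\le n$.

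The main obstacle is the co-area estimate: one must establish or carefully cite the Eilenberg inequality $\int^{*}\mathcal{H}^{s-1}(f^{-1}(r))\,dr\le C(\operatorname{Lip}f)\,\mathcal{H}^{s}(X)$ for the $1$-Lipschitz distance function, including the measurability technicalities that force the use of an upper integral $\int^{*}$. The remaining inputs — separability from finite Hausdorff dimension, the identity $\dim=\operatorname{ind}$ for separable metric spaces, and the monotonicity of $\operatorname{ind}$ under passage to subspaces — are standard facts of dimension theory that I would simply invoke.
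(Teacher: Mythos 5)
Your outline is correct, and in fact the paper offers no proof of this statement at all --- it simply cites Chapter VII of Hurewicz and Wallman --- while your argument is precisely the classical Szpilrajn argument presented there: pass to the small inductive dimension via separability, then induct on $n$ using the almost-everywhere vanishing of $\mathcal{H}^{s-1}$ on the distance spheres $S_r=\{x:d(x,x_0)=r\}$. The co-area step you rightly flag as the main obstacle is elementary for the $1$-Lipschitz function $f=d(\cdot,x_0)$: given a countable cover $\{E_i\}$ of $X$ by sets of small diameter with $\sum_i(\operatorname{diam}E_i)^{s}<\varepsilon$, each $E_i$ meets $S_r$ only for $r$ in an interval of length at most $\operatorname{diam}E_i$, so $\int^{*}\sum_{i:E_i\cap S_r\neq\varnothing}(\operatorname{diam}E_i)^{s-1}\,dr\le\sum_i(\operatorname{diam}E_i)^{s}<\varepsilon$, and Fatou's lemma then gives $\mathcal{H}^{s-1}(S_r)=0$ for almost every $r$, which is all your induction needs.
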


Hence any path-connected metric space with Hausdorff dimension less than two (and which is not a single point) has covering dimension one.  We will henceforth focus exclusively on geodesic spaces of topological dimension one.  Note that, in general, a one dimensional geodesic space might not be negatively curved, and indeed, might not even be locally contractible.\footnote{Recall that negatively curved metric spaces are defined by the property that their universal cover satisfies the $CAT(-\delta ^2)$ condition (a metric version of negative curvature, see Bridson and Haeflinger \cite{b-h}).  Naturally, the problem in our
setting is  that spaces which are very singular might not have a universal cover.} However, such spaces often exhibit properties which are quite similar to those of negatively curved spaces.  

\begin{Def}
Let $\gamma:(S^1,*)\longrightarrow (X,p)$ be a loop.  We say that the loop is {\it reducible} provided that there is an open interval $I=(x,y)\subset S^1-\{*\}$ such that $\gamma(x)=\gamma(y)$ and the loop $\gamma|_{[x,y]}$ is nullhomotopic relative to its endpoints.  We say that $\gamma$ is {\it cyclically reducible} if the interval $I$ is allowed to include the basepoint $*$.  A loop which is not reducible (resp. cyclically reducible) will be said to be {\it reduced} (resp. {\it cyclically reduced}).  We define a constant loop to be cyclically reduced.

Similarly, let $\gamma:[a,b]\longrightarrow X$ be a path. We say that the path is {\it reducible} provided that there is an open interval $I=(x,y)\subseteq (a,b)$ such that $\gamma(x)=\gamma(y)$ and the loop $\gamma|_{[x,y]}$ is nullhomotopic relative to its endpoints. We define a constant path to be reduced.
\end{Def}

\begin{Def}\label{geod}
Let $(X,d)$ be a $1$-dimensional geodesic space.  We say that a path is {\it geodesic} provided that it is rectifiable and  minimizes the length in its homotopy class (rel. endpoints).  We say that a loop $\gamma$ is a {\it geodesic loop} provided $\gamma$ is rectifiable and has minimal length in its free homotopy class.  As per our convention, all geodesics will be parametrized by arclength. 
\end{Def}

\noindent {\bf WARNING:} As mentioned earlier, the reader who is more familiar with the Riemannian setting should beware  that, for our highly singular spaces, geodesics might not be locally length-minimizing (or even locally injective). 

\vskip 10pt

We now state Cannon and Conner's theorem (Theorem 3.9 in \cite{cannon-conner}), which establishes the uniqueness
of geodesic loops within homotopy classes:

\begin{Thm}\label{loop_unique}
Let $(X,d)$ be a compact, path-connected, $1$-dimensional metric space.  Then
        \begin{itemize}
        		\item every loop is homotopic to a reduced loop, which is unique up to reparametrization,
        		\item every loop is freely homotopic to a cyclically reduced loop, which is unique up to cyclic reparametrization.
        \end{itemize}
The homotopies are taken within the image set of the loops, so that the reduced loop (or cyclically reduced loop) always lies inside the image of the original curve.
\end{Thm}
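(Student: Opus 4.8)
The plan is to establish the two bullet points together, since the cyclic (free homotopy) statement reduces to the based statement after an argument handling the basepoint. I would begin with existence of a reduced representative. Given a loop $\gamma:[0,1]\to X$ with $\gamma(0)=\gamma(1)$, consider the collection of all pairs $(s,t)$ with $s<t$, $\gamma(s)=\gamma(t)$, and $\gamma|_{[s,t]}$ nullhomotopic rel endpoints; these are exactly the subintervals witnessing reducibility. I would define an equivalence relation $\approx$ on $[0,1]$ generated by identifying such $s$ with $t$ (and, for the cyclic statement, allowing the witnessing interval to straddle the basepoint), and declare the reduced representative to be the loop induced on the quotient $[0,1]/\approx$. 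Since this quotient only collapses portions of the domain, the resulting loop automatically lies inside the image of $\gamma$, as the theorem requires.

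The first technical step is to show that $\approx$ is a closed equivalence relation whose classes are subcontinua that do not separate $[0,1]$. This is where one-dimensionality enters decisively: because the image of $\gamma$ is a compact, path-connected, $1$-dimensional metric space, any nullhomotopy has $1$-dimensional image, so there is no room for a ``thick'' family of independent cancellations. I would use this to prove that a limit of reducible subintervals is again reducible (upper semicontinuity of the decomposition) and that the non-separating condition holds. Granting this, Moore's theorem on monotone decompositions of the arc and the circle guarantees that $[0,1]/\approx$ is again an arc (respectively a circle), so the induced map is a genuine loop $\bar\gamma$; by construction $\bar\gamma$ is reduced and homotopic to $\gamma$ within the image of $\gamma$.

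For uniqueness I would argue that if two reduced loops $\gamma_1,\gamma_2$ are homotopic rel basepoint, then they differ only by a reparametrization. The strategy is to pass to a tree model: a nullhomotopy of the concatenation $\gamma_1\overline{\gamma_2}$ has $1$-dimensional image, and in dimension one each path-homotopy class carries a reduced representative that behaves like a geodesic segment in an $\mathbb{R}$-tree. I would exploit the asphericity of $1$-dimensional spaces together with the fact that two reduced paths in a tree sharing endpoints coincide, transferring this rigidity back to $X$ through the comparison map to the tree and concluding that $\gamma_1$ and $\gamma_2$ agree after reparametrization. The cyclic case then follows by removing the basepoint dependence: a free homotopy between cyclically reduced loops can be promoted, after a suitable cyclic reparametrization, to a based homotopy, reducing matters to the based uniqueness just established.

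The main obstacle I anticipate is the closedness of $\approx$ and the control of cancellations in the \emph{wild} (non-semi-locally-simply-connected) setting. Spaces such as the Hawaiian earring admit infinitely many nested or accumulating reducible subloops, so the naive ``cancel one backtrack at a time'' procedure need not terminate and cannot be used directly. The crux is therefore to perform all reductions \emph{simultaneously} as a single quotient and to verify that one-dimensionality forbids the equivalence classes from being pathological. Once the decomposition is shown to be monotone and upper semicontinuous, the topological machinery delivers existence, and the same dimensional rigidity yields the uniqueness of the reduced representative.
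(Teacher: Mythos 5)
First, a point of orientation: the paper does not prove this statement at all. It is quoted verbatim as Theorem 3.9 of Cannon and Conner (with Curtis and Fort also credited for the underlying facts), and everything downstream treats it as a black box. So there is no in-paper argument to compare yours against; the question is whether your sketch stands on its own, and at present it does not.

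Two concrete gaps. First, the relation $\approx$ as you generate it --- identify $s$ with $t$ whenever $\gamma|_{[s,t]}$ is a nullhomotopic subloop --- does not have connected equivalence classes: a point $u\in(s,t)$ need not be identified with $s$, so the quotient of $[0,1]$ is not an arc (each such identification wedges on a circle) and Moore's theorem on monotone decompositions does not apply. What you need is to collapse the entire interval $[s,t]$ to a point, i.e.\ a genuinely monotone decomposition whose nondegenerate elements are built from reducible intervals; this raises the issue that overlapping reducible intervals need not union to a reducible interval, so even defining the decomposition takes work. More seriously, the upper semicontinuity you correctly identify as the crux --- that a nested limit of reducible intervals is again reducible, and that the countably many simultaneous nullhomotopies assemble into one continuous homotopy staying inside the image of $\gamma$ --- is precisely the hard content of the Cannon--Conner and Curtis--Fort theorems. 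The assertion that ``one-dimensionality leaves no room for a thick family of cancellations'' is not an argument: in the Hawaiian earring the analogous closedness statement is essentially equivalent to the injectivity of $\pi_1$ into the inverse limit of free groups, which is a nontrivial theorem, not an observation. Second, the uniqueness step invokes a ``tree model'' that is never constructed. These spaces need not have universal covers, so there is no ambient $\mathbb{R}$-tree to compare against; the standard route is to take a path-homotopy $H:[0,1]^2\rightarrow X$ between the two reduced representatives, form its monotone--light factorization, and prove the middle space is a dendrite (using that light maps do not lower dimension and that a simply connected one-dimensional Peano continuum is a dendrite), after which the two reduced boundary paths are forced to coincide up to reparametrization. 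Without that construction the uniqueness claim is unsupported. Only the final reduction of the cyclic case to the based case, via the trace of the basepoint under a free homotopy, is a standard argument that would go through once the rest is in place.
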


Using this result, it is easy to show that in every free homotopy class of loops that contains a rectifiable curve, there is a unique minimal length representative.  Note that any length space is automatically a path-connected metric space.

\begin{Cor}\label{semunique}
Let $(X,d)$ be a compact $1$-dimensional length  space, $\gamma$ a loop.  Then one of the following two possibilities holds:
    \begin{itemize}
	    \item there is no rectifiable loop freely homotopic to $\gamma$,
	    \item there is a unique (up to reparametrization) minimal length rectifiable loop freely homotopic to $\gamma$.
    \end{itemize}
\end{Cor}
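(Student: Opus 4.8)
The plan is to deduce Corollary~\ref{semunique} directly from the second bullet of Theorem~\ref{loop_unique}, using the observation that a cyclically reduced loop realizes the minimal length in its free homotopy class. First I would dispose of the trivial dichotomy: either the free homotopy class of $\gamma$ contains a rectifiable loop, or it does not. In the latter case we are in the first bullet and there is nothing to prove, so I would assume from now on that there exists at least one rectifiable loop $\gamma'$ freely homotopic to $\gamma$. The goal is then to produce a unique minimal-length representative.

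\emph{Existence.} Given the rectifiable loop $\gamma'$, I would apply the second bullet of Theorem~\ref{loop_unique} to obtain a cyclically reduced loop $\sigma$ that is freely homotopic to $\gamma'$ (hence to $\gamma$), with the crucial feature that the homotopy stays inside the image of $\gamma'$, so $\sigma$ lies in $\mathrm{image}(\gamma')$. Since restriction to subpaths and finite concatenations preserve rectifiability, and $\sigma$ is obtained from $\gamma'$ by excising nullhomotopic sub-loops, $\sigma$ is itself rectifiable with $l(\sigma)\le l(\gamma')$. The key step is to argue that $\sigma$ is in fact minimal length: if some freely homotopic rectifiable loop $\tau$ had $l(\tau)<l(\sigma)$, I would cyclically reduce $\tau$ as well, obtaining a cyclically reduced loop $\sigma_\tau$ freely homotopic to $\sigma$ with $l(\sigma_\tau)\le l(\tau)<l(\sigma)$. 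By the uniqueness clause of Theorem~\ref{loop_unique}, $\sigma_\tau$ and $\sigma$ are the same loop up to cyclic reparametrization, so they have equal length, contradicting the strict inequality. Hence $\sigma$ realizes the infimum and is a minimal length representative.

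\emph{Uniqueness.} Suppose $\sigma_1,\sigma_2$ are both minimal length rectifiable loops freely homotopic to $\gamma$. A minimal length loop must be cyclically reduced: if it admitted a reducing interval $I=(x,y)$ with $\gamma|_{[x,y]}$ nullhomotopic rel endpoints, one could homotope that sub-loop to the constant path, strictly decreasing the length (the excised portion is nonconstant and rectifiable, so it has positive length, as a nonconstant rectifiable path in a length space covers positive distance), contradicting minimality. Thus both $\sigma_1$ and $\sigma_2$ are cyclically reduced loops in the same free homotopy class, and the uniqueness clause of Theorem~\ref{loop_unique} forces $\sigma_1=\sigma_2$ up to cyclic reparametrization, which under our arclength convention is exactly uniqueness up to reparametrization.

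The main obstacle I anticipate is the length-strictly-decreasing argument: I must be certain that excising a nullhomotopic sub-loop actually reduces the total length, which requires knowing that the sub-loop, being nonconstant and rectifiable, has strictly positive length. This follows because in a length space any nonconstant path connects two points whose intrinsic distance it bounds below, and a genuine sub-loop that is not collapsed to a point sweeps out positive distance; making this precise (and handling the edge case where the reducing interval degenerates) is the only delicate point. Everything else is a direct invocation of Theorem~\ref{loop_unique} together with the stated closure properties of rectifiability under concatenation and restriction.
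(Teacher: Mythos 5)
Your proposal is correct and follows essentially the same route as the paper: both arguments rest on the second bullet of Theorem \ref{loop_unique} together with the observation that excising a nullhomotopic sub-loop of positive length strictly shortens a rectifiable loop, so that any minimal-length representative must be cyclically reduced and is therefore unique. The only difference is one of emphasis --- you spell out the existence half (by cyclically reducing an arbitrary competitor $\tau$ and invoking uniqueness to force $l(\sigma)\le l(\tau)$), a step the paper's proof leaves implicit.
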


\begin{proof}
Let us assume that there is a rectifiable loop freely homotopic to $\gamma$, denote it by $\bar \gamma$.  Assume that $\bar \gamma$ is not cyclically reduced.  Then there exists an interval $I=[x,y]\subset S^1$ with $\bar \gamma(x)=\bar \gamma(y)$, such that the loop $\bar \gamma _1$ obtained by restricting $\bar \gamma$ to $[x,y]$ is contractible relative to its endpoints.  Consider the loop $\bar \gamma _2$ defined by restricting $\bar \gamma$ to the closure of $S^1-I$.  Then by contracting $\bar \gamma_1$, we see that $\bar \gamma$ is freely homotopic to $\bar \gamma _2$.  Furthermore, we have that $l(\bar \gamma)=l(\bar \gamma_1)+l(\bar \gamma_2)> l(\bar \gamma_2)$.  Hence $\bar \gamma$ cannot be of minimal length.

This forces any minimal length loop in a free homotopy class to be cyclically reduced.  But by the result of Cannon and Conner (Theorem \ref{loop_unique}), such a loop is unique up to reparametrization, completing the proof.
\end{proof}

In view of Corollary \ref{semunique}, when looking for geodesic loops, it is sufficient to restrict throughout to cyclically reduced loops. We can now define:

\begin{Def}\label{mls defn}
Let $(X,d)$ be a geodesic space.  Assume that, in each free homotopy class of curves on $X$, there is at most one minimal length representative.  The {\it marked length spectrum} is defined to be the function $l_d:\pi_1(X)\longrightarrow \mathbb{R}^+\cup \infty$ which assigns to each element of $\pi_1(X)$ the length of the corresponding minimal length loop (and assigns $\infty$ to the free homotopy classes that contain no rectifiable representatives).
\end{Def}

For geodesic spaces of topological dimension one, Corollary \ref{semunique} implies that the marked length spectrum is defined.

\vskip 10pt

It is straightforward to prove an analogue of the Cannon and Conner result for {\it paths} in a $1$-dimensional metric space.  

\begin{Prop}\label{path_unique}
Let $(X,d)$ be a compact, path-connected, $1$-dimensional metric space.  Then every path is homotopic (relative to the endpoints of the path) to a reduced path, which is unique up to reparametrization. Moreover, the reduced path has image lying within the image of the original path.
\end{Prop}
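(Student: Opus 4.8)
The goal is to prove Proposition~\ref{path_unique}, the path analogue of the Cannon--Conner loop uniqueness theorem (Theorem~\ref{loop_unique}). The natural plan is to reduce the path statement to the already-known loop statement by a doubling/coning trick. First I would take an arbitrary path $\gamma:[a,b]\longrightarrow X$ and convert it into a loop by concatenating $\gamma$ with a reversed copy $\bar\gamma$ of itself, producing a loop $\hat\gamma=\gamma\cdot\bar\gamma$ based at $\gamma(a)$. Intuitively, reducing the path relative to its endpoints should correspond exactly to reducing this doubled loop, since the reduction operation — cancelling a subarc $\gamma|_{[x,y]}$ whose restriction is nullhomotopic rel endpoints — is precisely the move used in the loop setting, merely forbidden from crossing the basepoint.

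The key steps, in order, are as follows. First, \emph{existence}: I would produce a reduced path by iterating the reduction move, or more cleanly by appealing to Theorem~\ref{loop_unique} applied to a suitable loop. The cleanest route is to attach an external arc: build an auxiliary space $X'$ (or work inside $X$ if an arc is available) by coning off, but since we cannot assume extra arcs exist, the doubling construction is safer because it stays inside the image of $\gamma$, which is important for the final ``image lies within'' claim. Second, \emph{uniqueness}: suppose $\gamma_1,\gamma_2$ are two reduced paths homotopic rel endpoints. I would concatenate $\gamma_1$ with the reverse $\bar\gamma_2$ to form a loop $\gamma_1\cdot\bar\gamma_2$ based at the common endpoint; this loop is nullhomotopic. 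Applying the loop version of Theorem~\ref{loop_unique}, its reduced representative is the constant loop. I then need to argue that the reductions witnessing this nullhomotopy can be organized so that they never cancel across the junction point where $\gamma_1$ meets $\bar\gamma_2$ until the two halves have each been fully reduced against themselves — but since $\gamma_1,\gamma_2$ are already reduced, no internal cancellations are available, forcing the cancellation to proceed from the junction inward in lockstep, which yields $\gamma_1=\gamma_2$ up to reparametrization.

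The main obstacle I anticipate is controlling the \emph{order} of reductions in the uniqueness argument. The difficulty is that in these highly singular one-dimensional spaces, the reduction move (cancelling a nullhomotopic subloop) can interact in subtle ways: a cancellation on one side of the junction might only become available after a cancellation that straddles the junction, so the naive ``reduce from the middle outward'' argument needs justification. I expect the resolution to rely on the strong uniqueness clause in Theorem~\ref{loop_unique} (reduced loops are unique up to reparametrization, with homotopies supported in the image) rather than on tracking individual reduction moves combinatorially. Specifically, I would show that if $\gamma_1 \neq \gamma_2$ as reduced paths, then $\gamma_1\cdot\bar\gamma_2$ would be a nonconstant loop that is nevertheless reduced (having no cancellable subarc, because any such subarc would exhibit a reduction of $\gamma_1$ or $\gamma_2$ individually, or force $\gamma_1$ and $\gamma_2$ to agree on an initial segment), contradicting that it is nullhomotopic and hence must reduce to the constant loop.

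The final bookkeeping — that the resulting reduced path has image contained in the image of the original — follows immediately from the corresponding clause in Theorem~\ref{loop_unique}, since the doubled loop has the same image as $\gamma$ and all homotopies stay within that image. Thus the only genuinely new work is the junction-cancellation analysis, and I would state it as a short lemma isolating the claim that a concatenation of two reduced paths sharing only their common endpoint is itself reduced away from that endpoint, reducing everything cleanly to Theorem~\ref{loop_unique}.
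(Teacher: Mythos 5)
There is a genuine gap, and it is in the centerpiece of your construction. The doubled loop $\hat\gamma=\gamma\cdot\bar\gamma$ is nullhomotopic \emph{rel its basepoint} $\gamma(a)$, so the unique reduced loop that Theorem \ref{loop_unique} associates to it is the constant loop; this carries no information whatsoever about a reduced representative of $\gamma$ rel endpoints. Your justification --- that reduction of the doubled loop is ``precisely the move used in the loop setting, merely forbidden from crossing the basepoint'' --- misidentifies where the danger lies: the basepoint of $\hat\gamma$ is $\gamma(a)$, while the junction of $\gamma$ with $\bar\gamma$ sits at $\gamma(b)$, in the \emph{interior} of the loop's parameter interval. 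Cancellations across $\gamma(b)$ are perfectly legal loop reductions (indeed the entire loop cancels across that point), but they correspond to no legal move on the path $\gamma$ rel its endpoints. So the doubling trick fails for existence. The uniqueness half has a second, subtler problem: to rule out a cancellable subarc of $\gamma_1\cdot\bar\gamma_2$ straddling the junction, you must show that a nullhomotopic loop formed by a reduced terminal segment of $\gamma_1$ followed by a reversed reduced terminal segment of $\gamma_2$ forces those two segments to coincide --- and that is exactly the uniqueness of reduced paths rel endpoints for shorter paths, i.e.\ the statement being proved. (This is precisely the junction analysis carried out later in Lemma \ref{concatenation}, whose proof explicitly \emph{invokes} Proposition \ref{path_unique}; there is no evident well-founded induction to break the circle, since path lengths are not discrete.)

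The fix the paper uses is a different construction which you did not consider: instead of modifying the \emph{path}, modify the \emph{space}. Form the quotient $X'=X/(p\sim q)$ obtained by identifying the two endpoints of the path. Then $X'$ is still a compact, path-connected, $1$-dimensional metric space (quotienting cannot raise the dimension), the path descends to a genuine loop based at the identified point, and basepoint-preserving homotopies of that loop correspond bijectively to endpoint-preserving homotopies of the original path. Now the first bullet of Theorem \ref{loop_unique}, applied in $X'$, delivers existence and uniqueness simultaneously, with no junction analysis needed; the image-containment clause also transfers directly. This achieves everything you wanted from the coning idea while staying inside the image of $\gamma$ and requiring no auxiliary arcs.
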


\begin{proof}
Let $X$ be our space, ${\bf p}$ our path, and $p,q$ the two endpoints. Consider the space $X^\prime$, defined to be the quotient space of $X$ obtained by identifying $p$ and $q$.  We claim that $X^\prime$ is a $1$-dimensional space.  Indeed, quotienting can only decrease the dimension of a space, hence $\dim(X^\prime) \leq 1$, and quotients of path-connected spaces are path-connected, so $\dim(X^\prime) \geq 1$, resulting in $X^\prime$ being $1$-dimensional.  Furthermore, $X^\prime$ is a compact, path-connected metric space, with the image ${\bf p}^\prime$ of ${\bf p}$ a closed loop based at the point $p=q$.  By the first part of Theorem \ref{loop_unique}, we have that this loop is homotopic to a unique (up to reparametrization) reduced loop. As there is a bijective correspondance between homotopies of ${\bf p}^\prime$ which preserve the basepoint and endpoint preserving homotopies of ${\bf p}$, we immediately get our claim. The statement about the images of the paths follows directly from the corresponding statement in Theorem \ref{loop_unique}.
\end{proof}

\begin{Cor}\label{I.24}
Let $(X,d)$ be a compact $1$-dimensional length space, ${\bf p}$ a rectifiable path joining points $p$ and $q$.  Then the unique reduced path  joining $p$ to $q$ homotopic (relative to the endpoints) to ${\bf p}$ has minimal length amongst all paths with this property.
\end{Cor}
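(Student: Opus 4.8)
The plan is to mirror the proof of Corollary \ref{semunique}, replacing ``cyclically reduced loop'' by ``reduced path'' and free homotopy by homotopy rel endpoints, with the one genuinely new ingredient being a lower semicontinuity argument needed to control length under the (possibly infinite) reduction process.

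First I would fix an arbitrary path ${\bf r}$ joining $p$ to $q$ and homotopic to ${\bf p}$ rel endpoints; if ${\bf r}$ is not rectifiable then $l({\bf r})=\infty$ and there is nothing to prove, so assume ${\bf r}$ is rectifiable. Let ${\bf p}_0$ denote the unique reduced path homotopic to ${\bf p}$ rel endpoints supplied by Proposition \ref{path_unique}. Since ${\bf r}\simeq {\bf p}\simeq {\bf p}_0$ rel endpoints and ${\bf p}_0$ is reduced, the uniqueness clause of Proposition \ref{path_unique} identifies ${\bf p}_0$ (up to reparametrization) as the reduced form of ${\bf r}$ as well. The goal is therefore to prove $l({\bf p}_0)\leq l({\bf r})$ for every such ${\bf r}$, which is exactly the assertion that ${\bf p}_0$ is length-minimizing in its class.

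The elementary step is identical to the one in Corollary \ref{semunique}: if ${\bf r}$ is reducible, choose an interval $I=(x,y)\subseteq (a,b)$ with ${\bf r}(x)={\bf r}(y)$ and ${\bf r}|_{[x,y]}$ nullhomotopic rel its endpoints, and let ${\bf r}'$ be the path obtained by excising ${\bf r}|_{[x,y]}$. This ${\bf r}'$ is continuous (as ${\bf r}(x)={\bf r}(y)$), is again homotopic to ${\bf r}$ rel endpoints by contracting the excised loop, and satisfies $l({\bf r}')=l({\bf r})-l({\bf r}|_{[x,y]})\leq l({\bf r})$. Thus each elementary deletion leaves the homotopy class unchanged and does not increase length. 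Iterating, the reduction process underlying Proposition \ref{path_unique} and Theorem \ref{loop_unique} produces a family of paths that stay inside the compact image of ${\bf r}$, have non-increasing length, and converge uniformly to the reduced path ${\bf p}_0$.

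Finally I would pass to the limit using lower semicontinuity of the length functional under uniform (indeed pointwise) convergence, a standard property of length structures (see Burago, Burago and Ivanov \cite{burago}): if ${\bf r}_n\to {\bf p}_0$ uniformly with $l({\bf r}_n)\leq l({\bf r})$, then $l({\bf p}_0)\leq \liminf_n l({\bf r}_n)\leq l({\bf r})$, as desired. The main obstacle is precisely this last step. Because our spaces need not be locally contractible, the reduction from ${\bf r}$ to ${\bf p}_0$ may require infinitely many deletions, and one cannot simply take a minimizing sequence and invoke Arzel\`a--Ascoli: a uniform limit of paths in the homotopy class of ${\bf r}$ need not remain in that class. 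What rescues the argument is that the reduction is organized so as to converge uniformly to the \emph{specific} reduced representative ${\bf p}_0$, whose membership in the class is already guaranteed by Proposition \ref{path_unique}; combined with the monotonicity of length under deletions and lower semicontinuity, this yields the bound without having to control homotopy classes in the limit.
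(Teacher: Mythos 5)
Your elementary step is exactly the paper's proof: the argument given for Corollary \ref{I.24} consists solely of the observation that a rectifiable, non-reduced path can be strictly shortened within its endpoint-preserving homotopy class by excising a null-homotopic subloop, from which it concludes (exactly as in Corollary \ref{semunique}) that any minimal-length representative must be reduced, and then invokes the uniqueness of the reduced representative from Proposition \ref{path_unique}. Up through your deletion step, you and the paper coincide.

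Where you diverge is in the final limiting argument, and this is also where your proposal has a real problem. You correctly sense that a single deletion only shows that a non-reduced path is not minimal, and that passing from this to the inequality $l({\bf p}_0)\leq l({\bf r})$ requires controlling an a priori infinite reduction process. But the claim you lean on --- that the reduction ``underlying Proposition \ref{path_unique} and Theorem \ref{loop_unique}'' can be organized as a sequence of elementary deletions with non-increasing lengths converging \emph{uniformly to the specific path} ${\bf p}_0$ --- is asserted, not proven. Theorem \ref{loop_unique} is quoted in this paper as a black box: it gives existence and uniqueness of the reduced representative and containment of images, but says nothing about intermediate stages of a reduction, their lengths, or their convergence. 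The deletion process is highly non-canonical (which cancellable subloop do you excise at each stage, and why should countably many excisions terminate in a reduced path, let alone converge uniformly to ${\bf p}_0$ rather than to some other limit?), so this is a genuine claim requiring proof, not a citation. If you want to close the step honestly, a cleaner route is to extract from the structure of the reduction the fact that ${\bf p}_0$ is obtained from ${\bf r}$ by collapsing a countable disjoint family of open parameter intervals $I_i$ on each of which ${\bf r}$ restricts to a null-homotopic loop; countable additivity of arclength then gives $l({\bf p}_0)=l({\bf r})-\sum_i l\bigl({\bf r}|_{I_i}\bigr)\leq l({\bf r})$ directly, with no semicontinuity needed. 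As written, your final paragraph names the right obstacle but does not actually overcome it.
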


\begin{proof}
This argument is identical to the one for loops: assume that ${\bf p}$ is rectifiable, but not reduced, and is a mapping from $[a,b]$ into $X$.  Then there is a subinterval $I=[a^\prime,b^\prime]\subset [a,b]$ such that ${\bf p}(a^\prime)={\bf p}(b^\prime)$ and the loop ${\bf p_1}$ obtained by restricting ${\bf p}$ to $I$ is contractible.  Denoting by ${\bf p_2}$ the concatenation of ${\bf p}$ restricted to $[a,a^\prime]$ and $[b^\prime,b]$, we see that ${\bf p_2}$ is homotopic to ${\bf p}$ via a homotopy preserving the endpoints.  Furthermore, $l({\bf p_2})< l({\bf p})$, which yields our claim.
\end{proof}

\noindent Corollary \ref{I.24} immediately yields:

\begin{Cor} \label{geodesic}
Let $(X,d)$ be a compact $1$-dimensional geodesic space, $\gamma$ a distance minimizer joining $p$ to $q$.  Then $\gamma$ is a geodesic.
\end{Cor}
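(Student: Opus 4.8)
The plan is to verify directly that $\gamma$ satisfies the two requirements in the definition of a geodesic (Definition~\ref{geod}): that it is rectifiable, and that it has minimal length within its homotopy class relative to its endpoints. Rectifiability is immediate, since $\gamma$ being a distance minimizer means $l(\gamma)=d(p,q)$, which is finite in a length space.

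The substance lies in the length-minimality within the homotopy class, and here there are two routes. The most direct one is to observe that any path $\sigma$ homotopic to $\gamma$ rel endpoints is in particular a path joining $p$ to $q$; by the definition of the metric on a geodesic space (Definition~\ref{geodspace}), together with the fact that $d=\hat d$ is the infimal length over all such paths, we get $l(\sigma)\geq d(p,q)=l(\gamma)$. Thus $\gamma$ minimizes length not merely within its homotopy class but among \emph{all} paths from $p$ to $q$, and so is certainly a geodesic. From this angle the statement is nearly tautological: its content is simply to record that the metric notion of ``distance minimizer'' is a special case of the homotopical notion of ``geodesic.''

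Alternatively, following the cue that the result should follow from Corollary~\ref{I.24}, I would first argue that $\gamma$ must be reduced. If $\gamma$ were reducible, the construction in the proof of Corollary~\ref{I.24} would produce a path ${\bf p_2}$, homotopic to $\gamma$ rel endpoints, with $l({\bf p_2})<l(\gamma)=d(p,q)$; this contradicts the fact that $d(p,q)$ is the infimal length over all paths from $p$ to $q$. Hence $\gamma$ is reduced, and Corollary~\ref{I.24} then guarantees that the unique reduced path in the homotopy class of $\gamma$---namely $\gamma$ itself---has minimal length in that class, which is exactly the assertion that $\gamma$ is a geodesic.

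I do not expect any genuine obstacle here; the only care required is to keep the two notions of minimality distinct---``distance minimizer'' minimizes over all competing paths, while ``geodesic'' only demands minimality within a fixed homotopy class---and to observe that the former is the stronger condition, so that the desired implication follows at once.
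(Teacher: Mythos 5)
Your proposal is correct and matches the paper, which gives no separate argument for this corollary beyond noting that it follows immediately from Corollary \ref{I.24}; your second route is precisely that deduction, and your first route is the even more direct observation that the infimum of lengths over a homotopy class is at least the infimum over all paths from $p$ to $q$, which equals $l(\gamma)$. No gaps.
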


\vskip 10pt

The rest of this section will be devoted to analyzing the behavior of paths under concatenation.
We fix the following notation: given a pair of paths ${\bf p}$ and ${\bf q}$, with the terminal endpoint of ${\bf p}$ coinciding with the initial endpoint of ${\bf q}$, we will denote by ${\bf q}*{\bf p}$ the concatenation of the two paths (traversing ${\bf p}$ first, followed by ${\bf q}$), and by ${\bf p}^{-1}$ the path obtained by reversing ${\bf p}$.  We will denote by $\gamma^n$ the $n$-fold iterated concatenation of a loop $\gamma$.  We start by proving several lemmas concerning concatenations of various types of paths.  Throughout the rest of this section, we will assume that $X$ is a compact, $1$-dimensional, geodesic space.

\begin{Lem}\label{concatenation}
Let ${\bf p_1}, {\bf p_2}$ be a pair of reduced paths, parametrized by arclength, in $X$. Let $t_i=l({\bf p_i})$ be the length of the respective paths, and assume that ${\bf p_1}(t_1)={\bf p_2}(0)$ (i.e. that they have a common endpoint).  Then any reduced path ${\bf q}$ homotopic to ${\bf p}:={\bf p_2}*{\bf p_1}$ is of the form ${\bf q_2}*{\bf q_1}$ where ${\bf q_i}$ is a subpath of ${\bf p_i}$.  Furthermore, we have a decomposition ${\bf p_1}={\bf r_1}*{\bf q_1}$ and ${\bf p_2}={\bf q_2}*{\bf r_2}$, satisfying ${\bf r_1}={\bf
r_2}^{-1}$.
\end{Lem}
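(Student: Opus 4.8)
The plan is to reduce the statement to the uniqueness of reduced representatives furnished by Proposition \ref{path_unique}: since any two reduced paths that are homotopic rel endpoints agree up to reparametrization, it suffices to exhibit a \emph{single} reduced path of the asserted form ${\bf q_2}*{\bf q_1}$ homotopic to ${\bf p}$, after which \emph{every} reduced representative of ${\bf p}$ must have that form. Throughout I will use two elementary facts, both immediate from the definition of reducibility: a subpath of a reduced path is reduced, and the reversal of a reduced path is reduced.

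To build the candidate I would isolate the backtracking of ${\bf p_2}$ against ${\bf p_1}$ at their common endpoint $v={\bf p_1}(t_1)={\bf p_2}(0)$. Set
\[ b^\ast := \max\bigl\{\, b \in [0,\min(t_1,t_2)] : {\bf p_1}(t_1-\tau)={\bf p_2}(\tau)\ \text{for all }\tau\in[0,b]\,\bigr\}, \]
the maximal length over which ${\bf p_2}$ exactly retraces the tail of ${\bf p_1}$; the set is downward closed, and the maximum is attained since the retracing condition is closed under limits by continuity. Define ${\bf q_1}={\bf p_1}|_{[0,t_1-b^\ast]}$, ${\bf r_1}={\bf p_1}|_{[t_1-b^\ast,t_1]}$, ${\bf r_2}={\bf p_2}|_{[0,b^\ast]}$ and ${\bf q_2}={\bf p_2}|_{[b^\ast,t_2]}$. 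By construction ${\bf p_1}={\bf r_1}*{\bf q_1}$ and ${\bf p_2}={\bf q_2}*{\bf r_2}$, while the retracing identity on $[0,b^\ast]$ yields ${\bf r_1}={\bf r_2}^{-1}$; consequently ${\bf r_2}*{\bf r_1}$ is nullhomotopic rel endpoints, and hence ${\bf p}={\bf p_2}*{\bf p_1}$ is homotopic rel endpoints to ${\bf q}:={\bf q_2}*{\bf q_1}$.

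It then remains to verify that ${\bf q}$ is reduced, which is the crux. Write $w={\bf p_1}(t_1-b^\ast)={\bf p_2}(b^\ast)$ for the junction of ${\bf q_1}$ and ${\bf q_2}$, and suppose toward a contradiction that ${\bf q}$ has a nullhomotopic subloop ${\bf q}|_{[x,y]}$. Since ${\bf q_1}$ and ${\bf q_2}$ are subpaths of the reduced paths ${\bf p_1},{\bf p_2}$ and hence reduced, the interval $(x,y)$ cannot lie inside either piece, so it must straddle the junction parameter. The two halves of the subloop are then a terminal subpath of ${\bf q_1}$ and an initial subpath of ${\bf q_2}$, both reduced, running from the common point ${\bf q}(x)={\bf q}(y)$ to $w$ (the latter taken in reverse) and homotopic rel endpoints; by the uniqueness in Proposition \ref{path_unique} they coincide up to reparametrization, and since both are parametrized by arclength they retrace one another exactly. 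Re-indexing this identity shows ${\bf p_1}(t_1-\tau)={\bf p_2}(\tau)$ holds on an interval $[0,b^\ast+\alpha]$ with $\alpha>0$, contradicting the maximality of $b^\ast$. Thus ${\bf q}$ is reduced, and invoking uniqueness once more, every reduced path homotopic to ${\bf p}$ is a reparametrization of ${\bf q}$, which is precisely the asserted decomposition.

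The main obstacle is exactly the straddling case above: one must upgrade a purely \emph{homotopical} cancellation crossing the junction into an \emph{exact metric} retracing, in order to contradict maximality. This is where one-dimensionality enters essentially, via Proposition \ref{path_unique}: it is the rigidity that two homotopic reduced paths are genuine reparametrizations of one another — and therefore coincide as arclength-parametrized curves — that has no analogue in higher dimensions and makes the argument go through.
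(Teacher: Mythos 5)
Your proof is correct and follows essentially the same route as the paper's: both hinge on applying Proposition \ref{path_unique} to the two halves of a nullhomotopic subloop straddling the junction, thereby converting homotopical cancellation into exact arclength backtracking. The only difference is organizational --- you define the maximal backtracking interval metrically at the outset and then verify the remainder is reduced, whereas the paper first shows every reducing interval is symmetric about the junction and then extracts a maximal one --- but the mathematical content is identical.
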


\begin{proof}
Let us define ${\bf p}$ to be the concatenation ${\bf p_2}*{\bf p_1}$. We start by observing that the claim is trivial if the concatenation ${\bf p}$ is reduced (just take ${\bf q_i}={\bf p_i}$).  So let us assume that the concatenation ${\bf p}$ is {\it not} a reduced path, and view it as a map from $D:=[0,t_1+t_2]$ into $X$.  Since this path is reducible, there exists closed intervals $U_j\subset D$ with the property that ${\bf p}$ restricted to each $U_j$ is a closed path which is null homotopic relative to its endpoints.  Since each of the paths ${\bf p_1},{\bf p_2}$ is reduced, this forces $t_1\in U_j$.  

We now claim that, under inclusion, the family of such closed intervals forms a totally ordered set.  In order to see this, we show that any such set $U_j=[a_j,b_j]$ is a symmetric closed interval around $t_1$ (i.e. that $(a_j+b_j)/2=t_1$).  But this is clear: one can just consider the restriction of ${\bf p}$ to the two sets $[a_j,t_1]$ and $[t_1,b_j]$.  This yields a pair of paths, parametrized by arclength, joining the point ${\bf p}(a_j)={\bf p}(b_j)$ to the point ${\bf p}(t_1)$. Furthermore, each of these paths is reduced (since they are subpaths of the reduced paths ${\bf p_1}$ and ${\bf p_2}$ respectively).  But we know by Lemma \ref{path_unique} that there is a unique reduced path in each endpoint-preserving homotopy class of paths joining a pair of points. Hence the two paths have to coincide, and as they are parametrized by arclength, we immediately obtain our claim.

Next, we argue that this totally ordered chain has a maximal element. Indeed, consider the set $U$ defined to be the union of our sets $U_j$.  We claim that $U$ is still within our family.  To see this, we merely note that, by our previous observation on the $U_j=[a_j,b_j]$, the restriction of ${\bf p}$ to each symmetric (about $t_1$) subinterval of $U$ consists merely of traversing some reduced path on
$[a_j,t_1]$, followed by backtracking along the same path on $[t_1,b_j]$.  By continuity, the same must hold for the symmetric closed interval $\bar U$, so that the closure of $U$ also lies within our family.  Hence $U=\bar U$, and we have found our maximal element.

It is now easy to complete our proof: if $[a,b]$ is our maximal interval $U$, we can now define our ${\bf q_i}$ and ${\bf r_i}$ explicitely.  We set ${\bf q_1}:= {\bf p}|_{[0,a]}$, ${\bf r_1}:= {\bf p}|_{[a,t_1]}$, ${\bf r_2}:= {\bf p}|_{[t_1,b]}$, and ${\bf q_2}:= {\bf p}|_{[b,t_1+t_2]}$.  We note that it is clear that ${\bf p_1}={\bf r_1}*{\bf q_1}$ and ${\bf p_2}={\bf q_2}*{\bf r_2}$.  From our proof, it is also immediate that ${\bf r_1}={\bf r_2}^{-1}$.  Finally, since $U$ was picked to be maximal, the path ${\bf q_2}*{\bf q_1}$ must be reduced, completing the proof.
\end{proof}

An easy inductive argument gives the following corollary, which will underpin our consideration of more complicated paths later in the paper.

\begin{Cor}\label{multiple-concat}
Let $\mathbf{p_i}$ be reduced paths and assume that $\mathbf{p_{i+1}*p_i}$ is reduced for all $i$.  Then the path $\mathbf{p_n*p_{n-1}* \cdots *p_2*p_1}$ is reduced.
\end{Cor}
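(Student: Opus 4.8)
The plan is to induct on the number $n$ of paths. The base case $n=1$ is immediate, and the case $n=2$ is exactly the hypothesis that $\mathbf{p_2*p_1}$ is reduced. For the inductive step I would assume the result for $n-1$ paths, so that $\mathbf{P}:=\mathbf{p_{n-1}*\cdots*p_1}$ is reduced, and then show that $\mathbf{p_n*P}$ is reduced. Since $\mathbf{p_n}$ and $\mathbf{P}$ are both reduced paths sharing a common endpoint, Lemma \ref{concatenation} applies directly to the pair $(\mathbf{p_n},\mathbf{P})$.

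Arguing by contradiction, suppose $\mathbf{p_n*P}$ is \emph{not} reduced. Then Lemma \ref{concatenation} produces decompositions $\mathbf{P}=\mathbf{r_1*q_1}$ and $\mathbf{p_n}=\mathbf{q_2*r_2}$ with $\mathbf{r_1}=\mathbf{r_2}^{-1}$, where $\mathbf{r_1}$ is a nontrivial terminal subpath of $\mathbf{P}$ and $\mathbf{r_2}$ a nontrivial initial subpath of $\mathbf{p_n}$ (nontriviality being precisely the failure of $\mathbf{p_n*P}$ to be reduced). In other words, a terminal segment of $\mathbf{P}$ coincides, after reversal, with an initial segment of $\mathbf{p_n}$, so the concatenation backtracks at the junction point $x$ where $\mathbf{P}$ ends and $\mathbf{p_n}$ begins.

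The crucial step is to localize this backtracking to a single junction governed by the hypothesis. Note $x$ is also the terminal endpoint of $\mathbf{p_{n-1}}$, since $\mathbf{P}$ ends with $\mathbf{p_{n-1}}$. Set $\epsilon:=\min\{l(\mathbf{r_1}),\,l(\mathbf{p_{n-1}})\}>0$. Because $\mathbf{r_1}$ is the terminal subpath of $\mathbf{P}$ ending at $x$ and $\epsilon\le l(\mathbf{p_{n-1}})$, the terminal segment of $\mathbf{P}$ of length $\epsilon$ is the terminal segment of $\mathbf{p_{n-1}}$ of length $\epsilon$; call it $\mathbf{a}$. Similarly, as $\epsilon\le l(\mathbf{r_1})=l(\mathbf{r_2})\le l(\mathbf{p_n})$, the initial segment of $\mathbf{p_n}$ of length $\epsilon$, call it $\mathbf{b}$, is an initial segment of $\mathbf{r_2}$. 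Since $\mathbf{r_1}=\mathbf{r_2}^{-1}$ and all paths are parametrized by arclength, matching up at the common endpoint $x$ forces $\mathbf{a}=\mathbf{b}^{-1}$.

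Consequently $\mathbf{b*a}$ traverses $\mathbf{a}$ and then immediately retraces it, hence is null-homotopic relative to its endpoints; as $\mathbf{a}$ is a terminal segment of $\mathbf{p_{n-1}}$ and $\mathbf{b}$ an initial segment of $\mathbf{p_n}$, this exhibits $\mathbf{p_n*p_{n-1}}$ as reducible, contradicting the hypothesis. Therefore $\mathbf{p_n*P}$ is reduced, completing the induction. I expect the only point requiring genuine care to be this localization: a priori the cancellation segment $\mathbf{r_1}$ may be longer than $\mathbf{p_{n-1}}$ (reaching back into $\mathbf{p_{n-2}}$ or earlier), so one cannot directly identify $\mathbf{r_1}$ with a subpath of $\mathbf{p_{n-1}}$, and passing to the length-$\epsilon$ segment near $x$ is exactly what reduces the general situation to the single junction controlled by the hypothesis.
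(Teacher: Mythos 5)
Your proof is correct and follows exactly the route the paper intends: the paper offers no written proof beyond the remark that ``an easy inductive argument'' using Lemma \ref{concatenation} suffices, and your induction with the cancellation segments $\mathbf{r_1}=\mathbf{r_2}^{-1}$ is that argument. The localization step you flag --- truncating the cancellation to length $\epsilon=\min\{l(\mathbf{r_1}),l(\mathbf{p_{n-1}})\}$ so that the backtracking is witnessed entirely inside $\mathbf{p_n*p_{n-1}}$ --- is precisely the one detail worth writing down, and you handle it correctly.
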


\begin{Cor} \label{redloop}
Let $\eta$ be a reduced loop in $X$.  Then $\eta$ can be expressed as a concatenation ${\bf p}^{-1}*\gamma*{\bf p}$, where ${\bf p}$ is a reduced path, and $\gamma$ is a geodesic loop. 
\end{Cor}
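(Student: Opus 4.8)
The plan is to realize the conjugating path all at once by reducing the double loop $\eta^2=\eta*\eta$ and reading off the cancellation that occurs across its central junction. First I would note that a reduced loop is automatically a reduced path: a reducing interval $(x,y)\subseteq(0,L)$ for $\eta$ viewed as a path $\eta:[0,L]\to X$ (with $L=l(\eta)$, basepoint $\eta(0)=\eta(L)$) corresponds to a reducing interval in $S^1-\{*\}$, so reducedness as a loop gives reducedness as a path. Then I would apply Lemma \ref{concatenation} to the pair $\mathbf p_1=\mathbf p_2=\eta$, whose common endpoint is the basepoint. This produces a reduced path $\mathbf q_2*\mathbf q_1$ homotopic to $\eta^2$, together with decompositions $\eta=\mathbf r_1*\mathbf q_1$ and $\eta=\mathbf q_2*\mathbf r_2$ satisfying $\mathbf r_1=\mathbf r_2^{-1}$. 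I would set $\mathbf p:=\mathbf r_2$, which is reduced as a subpath of $\eta$, and let $\ell:=l(\mathbf p)$. The relation $\mathbf r_1=\mathbf r_2^{-1}$ says precisely that the terminal length-$\ell$ segment of $\eta$ is the reverse of its initial length-$\ell$ segment, i.e. $\eta(s)=\eta(L-s)$ for all $s\in[0,\ell]$.

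Next I would verify that $\ell\le L/2$, which is what makes room for the middle loop. If instead $\ell>L/2$, then the subpath $\eta|_{[L-\ell,\ell]}$ is nonempty and, by the relation $\eta(s)=\eta(L-s)$, equals $\beta^{-1}*\beta$ where $\beta:=\eta|_{[L-\ell,L/2]}$ is nonconstant; such a backtracking path is nullhomotopic relative to its endpoints, so it exhibits an interior reducing interval for $\eta$, contradicting that $\eta$ is reduced (the constant case being trivial). With $\ell\le L/2$ in hand, I would define $\gamma:=\eta|_{[\ell,L-\ell]}$, a loop based at $\eta(\ell)=\eta(L-\ell)$, and observe that by construction $\mathbf q_1=\gamma*\mathbf p$ and $\mathbf q_2=\mathbf p^{-1}*\gamma$. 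Consequently $\eta=\mathbf q_2*\mathbf p=\mathbf p^{-1}*\gamma*\mathbf p$, and the reduced path furnished by the lemma is $\mathbf q_2*\mathbf q_1=\mathbf p^{-1}*\gamma^2*\mathbf p$.

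It then remains to show that $\gamma$ is a geodesic loop, and here the key leverage is that $\mathbf p^{-1}*\gamma^2*\mathbf p$ is reduced by Lemma \ref{concatenation}. If $\gamma$ were not cyclically reduced, the cyclic reduction through its basepoint would produce, across the central junction of $\gamma*\gamma$, a nullhomotopic loop on a closed interval lying strictly inside the $\gamma^2$ block, hence strictly inside $\mathbf p^{-1}*\gamma^2*\mathbf p$; this interior reducing interval would contradict the reducedness of $\mathbf p^{-1}*\gamma^2*\mathbf p$. Thus $\gamma$ is cyclically reduced. Since, by Theorem \ref{loop_unique} and Corollary \ref{semunique}, the cyclically reduced loop in a free homotopy class is unique and coincides with the minimal-length representative, $\gamma$ is the geodesic loop of its class. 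Together with $\mathbf p$ being reduced, this yields the desired decomposition $\eta=\mathbf p^{-1}*\gamma*\mathbf p$.

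I expect the main obstacle to be the bookkeeping around the two possible over-reductions: establishing $\ell\le L/2$, so that the conjugating path does not overrun the middle loop, and ruling out cyclic reduction of $\gamma$. Both hinge on carefully locating a reducing interval in the \emph{interior} of the relevant path and invoking reducedness (of $\eta$, respectively of $\mathbf p^{-1}*\gamma^2*\mathbf p$) to derive a contradiction. The degenerate cases, namely $\eta$ already cyclically reduced (giving $\ell=0$, $\mathbf p$ constant, $\gamma=\eta$) and $\eta$ constant, should be dispatched immediately.
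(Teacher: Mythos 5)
Your proof is correct, and it leans on the same key tool as the paper's argument: Lemma \ref{concatenation} applied to a self-concatenation of $\eta$, with the relation $\mathbf{r_1}=\mathbf{r_2}^{-1}$ at the basepoint junction supplying the conjugating path $\mathbf{p}$. The difference is in which concatenation you reduce. The paper splits $\eta$ at its midpoint and reduces the rebased loop $\mathbf{q_1}*\mathbf{q_2}$; since the cancelled pieces are then subpaths of the two halves, the bound $\ell\le L/2$ is automatic and the decomposition can be read off at once. You reduce $\eta^2$ instead, so you must separately rule out $\ell>L/2$ --- your symmetry argument producing the backtracking path $\beta^{-1}*\beta$ does this correctly --- but in exchange you obtain the reduced form $\mathbf{p}^{-1}*\gamma^2*\mathbf{p}$, from which the cyclic reducedness of $\gamma$ (a point the paper leaves implicit in ``our claim immediately follows'') drops out cleanly, since any cyclic reduction of $\gamma$ through its basepoint would sit in the interior of the $\gamma^2$ block. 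Both routes are sound; yours is slightly longer at the front end and more explicit at the back end.
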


\begin{proof}
Let us view $\eta$, parametrized by arclength, as a map from $[0, t]$ into $X$.  Consider the point $p:=\eta (t/2)$, and consider the pair of paths ${\bf q_1}:= \eta|_{[0,t/2]}$ and ${\bf q_2}:=\eta_{[t/2,t]}$.  Observe that each of these paths is reduced (being a subpath of $\eta$), have common endpoints, and that $\eta = {\bf q_2}*{\bf q_1}$. Now consider the concatenation of paths ${\bf q_1}*{\bf q_2}$ and apply Lemma \ref{concatenation}.  Our claim immediately follows.
\end{proof}

For the next Lemma, we need the following definition.

\begin{Def}
A path $\mathbf{p}$ defined over times $[0,t]$ is \emph{non-self-terminating} if $\mathbf{p}(t) \notin \mathbf{p}\big({[0, t)}\big)$.  It is \emph{non-self-originating} if $\mathbf{p}(0) \notin \mathbf{p}\big({(0, t]}\big)$.
\end{Def}

We now analyze the reduced loop within the homotopy class of a path-loop-path concatenation.

\begin{Lem}\label{p-l-p}
Let $\mathbf{p}$ be a reduced path parametrized by arc length $[0,t]$, $\gamma$ a cyclically reduced loop based at $p:=\mathbf{p}(t)$, parametrized by arc length $[0,s]$. Then the unique reduced loop $\eta$ in the homotopy class of $\mathbf{p}^{-1}*\gamma*\mathbf{p}$ must pass through $p$.

Assume further that $\mathbf{p}$ is non-self-terminating.  Then $\eta$ either coincides with $\mathbf{p}$ on the interval $[0,t]$ or coincides with $\mathbf{p}^{-1}$ on the interval $[l(\eta)-t, l(\eta)]$.
\end{Lem}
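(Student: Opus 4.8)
The plan is to reduce the concatenation $\mathbf{p}^{-1}*\gamma*\mathbf{p}$ in two stages, applying Lemma \ref{concatenation} at each of the two junctions (both of which equal $p$), and then to control the resulting cancellations using the cyclic reducedness of $\gamma$ together with the non-self-terminating hypothesis. Write $a:=\mathbf{p}(0)$. We may assume $\gamma$ is non-constant, since otherwise $\eta$ is constant and there is nothing to prove.

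For the first assertion I would argue purely via Theorem \ref{loop_unique}. The loop $\eta$ is based-homotopic, hence freely homotopic, to $\mathbf{p}^{-1}*\gamma*\mathbf{p}$, which in turn is freely homotopic to its conjugation core $\gamma$. Since $\gamma$ is cyclically reduced, it is the cyclically reduced representative of the free homotopy class of $\eta$. By the final clause of Theorem \ref{loop_unique}, this cyclically reduced representative lies inside the image of $\eta$; hence $\mathrm{im}(\gamma)\subseteq \mathrm{im}(\eta)$, and in particular $p\in \mathrm{im}(\eta)$.

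For the second assertion I would first reduce the path $\gamma*\mathbf{p}$ to a reduced path $\mathbf{m}$ via Lemma \ref{concatenation}: this cancels an initial sub-path of $\gamma$, of length $\alpha$ say, against the final sub-path of $\mathbf{p}$. Then I would apply Lemma \ref{concatenation} a second time to $\mathbf{p}^{-1}*\mathbf{m}$; since $\mathbf{m}$ terminates along $\gamma$, this cancels a final sub-path of $\gamma$, of length $\beta$ say, against the initial sub-path of $\mathbf{p}^{-1}$, i.e.\ against the final sub-path of $\mathbf{p}$. By uniqueness of the reduced path (Proposition \ref{path_unique}) the outcome is the reduced loop $\eta$. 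The crucial structural claim is that $\alpha$ and $\beta$ cannot both be positive: were they both positive, then near $p$ the loop $\gamma$ would enter $p$ along $\mathbf{p}$ (its final $\beta$-segment coincides with the final $\beta$-segment of $\mathbf{p}$) and leave $p$ backwards along $\mathbf{p}$ (its initial $\alpha$-segment retraces $\mathbf{p}$ in reverse), producing a genuine backtrack across the basepoint of $\gamma$ — an explicit cyclic reduction, contradicting that $\gamma$ is cyclically reduced.

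It remains to see that the surviving copy of $\mathbf{p}$ is the \emph{full} path, and this is exactly where the non-self-terminating hypothesis enters. I would show that the second-stage cancellation length is strictly less than $s=l(\gamma)$: if it equalled or exceeded $s$, then all of $\gamma$ (and possibly part of the conjugating $\mathbf{p}$) would be matched against a final sub-path of $\mathbf{p}$, forcing $\mathbf{p}(t-s)=\mathbf{p}(t)=p$ and hence $\mathbf{p}(t)\in\mathbf{p}\big([0,t)\big)$, which contradicts non-self-termination. Consequently both cancellations remain confined to $\gamma$, and by the dichotomy above at least one of $\alpha,\beta$ vanishes: if $\alpha=0$ the initial copy of $\mathbf{p}$ is untouched, so $\eta$ coincides with $\mathbf{p}$ on $[0,t]$; if $\beta=0$ the terminal copy of $\mathbf{p}^{-1}$ is untouched, so $\eta$ coincides with $\mathbf{p}^{-1}$ on $[l(\eta)-t,l(\eta)]$. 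Reducedness of the resulting concatenations is supplied by Lemma \ref{concatenation} (or Corollary \ref{multiple-concat}). I expect the main obstacle to be the bookkeeping that identifies the abstract cancellations produced by the two applications of Lemma \ref{concatenation} with concrete arclength overlaps between $\gamma$ and $\mathbf{p}$, so that both the cyclic-reduction contradiction and the non-self-termination contradiction can be stated cleanly and applied to the correct segments.
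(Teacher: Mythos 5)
Your proposal is correct in substance, and the two halves relate to the paper's proof differently. For the first assertion you take a genuinely different and cleaner route: since $\eta$ is freely homotopic to the cyclically reduced $\gamma$, the uniqueness and image-containment clauses of Theorem \ref{loop_unique} give $\mathrm{im}(\gamma)\subseteq\mathrm{im}(\eta)$ directly, hence $p\in\mathrm{im}(\eta)$. The paper instead argues by hand at the two junctions, and in particular must separately dispose of the awkward case $\mathbf{p}=\gamma^n*\mathbf{c}$ (where $\mathbf{p}$ ends by winding around $\gamma$), which your argument sidesteps entirely; what the paper's hands-on version buys is the cancellation machinery it reuses for the second assertion. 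For the second assertion your argument is essentially the paper's: a cancellation at both junctions forces $(\gamma|_{[0,\delta]})^{-1}=\mathbf{p}|_{[t-\delta,t]}=\gamma|_{[s-\delta,s]}$, contradicting cyclic reducedness, while total cancellation of $\gamma$ is excluded by non-self-termination. You organize this as a two-stage application of Lemma \ref{concatenation} with explicit cancellation lengths $\alpha,\beta$, where the paper phrases it as a case analysis on which junction reduces.

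One bookkeeping point to tighten: the bound you need from non-self-termination is not only that the \emph{second}-stage cancellation is $<s$, but also that the \emph{first}-stage cancellation satisfies $\alpha<s$. If $\alpha=s$, then $\mathbf{q}_2$ is trivial and your ``both positive implies a cyclic reduction of $\gamma$'' dichotomy has nothing to bite on, since the second-stage cancellation would then be matching $\mathbf{p}^{-1}$ against a sub-path of $\mathbf{p}$ rather than against a terminal segment of $\gamma$. The fix is the same computation you already use: $\alpha=s$ forces $\mathbf{p}|_{[t-s,t]}=\gamma^{-1}$, hence $\mathbf{p}(t-s)=\gamma(s)=p=\mathbf{p}(t)$, contradicting non-self-termination (this is exactly the paper's $\mathbf{p}=\gamma^n*\mathbf{c}$ scenario, which it likewise notes is ruled out by that hypothesis). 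With that case dispatched, your dichotomy and the resulting two alternatives for $\eta$ go through as you describe.
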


\begin{proof}
By contradiction, suppose $\eta$ avoids $p$.  First, assume that $\mathbf{p}^{-1}$ has the (reduced) form $\mathbf{p}^{-1}=\mathbf{c}^{-1}*\gamma^{-n}$  for some maximal $n>0$, where $\mathbf{c}^{-1}*\mathbf{p}$ is reducible so that it avoids $p$.  Then $\mathbf{p}=\gamma^n*\mathbf{c}$ (note that the non-self-terminating assumption we will use for the second part of the Lemma rules such behavior out).  Then we reduce:
\[\mathbf{p}^{-1}*\gamma*\mathbf{p} = \mathbf{c}^{-1}*\gamma*\mathbf{c}.\]

\noindent We know that $\mathbf{c}^{-1}$ does not fully cancel $\gamma$ by the choice of $n$ maximal above.  We now may replace $\mathbf{p}$ with $\mathbf{c}$ and proceed.

So we can now assume that there is no initial factor of $\gamma^{-1}$ in $\mathbf{p}^{-1}.$  Thus $\gamma*\mathbf{p}$ must be reducible as it hits $p$.  By Lemma \ref{concatenation}, $(\gamma|_{[0, \epsilon]})^{-1}=\mathbf{p}|_{[t-\epsilon, t]}$.  Consider instead $\mathbf{p}^{-1}*\gamma$, again reducible as it hits $p$.  By Lemma \ref{concatenation}, $\gamma|_{[s-\epsilon', s]}= (\mathbf{p}^{-1}|_{[0, \epsilon']})^{-1} =\mathbf{p}|_{[t-\epsilon', t]}$.  Picking $\delta<\epsilon, \epsilon'$, we conclude that

\[(\gamma|_{[0,\delta]})^{-1} = \mathbf{p}|_{[t-\delta, t]} = \gamma|_{[s-\delta, s]}.\]

\noindent This contradicts the assumption that $\gamma$ is cyclically reduced.  Thus $p\in \eta$, completing the first part of the Lemma.

\vspace{.5cm}

For the second part of the Lemma, we now assume that $\mathbf{p}$ is non-self-terminating.  If $\mathbf{p}^{-1}*\gamma*\mathbf{p}$ is already reduced, we are done.
If not, Corollary \ref{multiple-concat} tells us that there is a reduction of $\mathbf{p}^{-1}*\gamma*\mathbf{p}$ around time $t$ or $t+s$ (as $\mathbf{p}$ and $\gamma$ are individually reduced).  By the arguments for the first part of the Lemma, if reductions at {\it both} of these times are possible, there is a contradiction to $\gamma$ being cyclically reduced.  

Let us assume first that there is no reduction around time $t$. Then $\gamma*\mathbf{p}$ is reduced, and the only way $\eta$ can fail to coincide with $\mathbf{p}$ over $[0,t]$ is if $\mathbf{p}^{-1}$ completely cancels $\gamma$ and part of $\mathbf{p}$.  But this is the situation in the first part of our arguments above; as we noted, it is ruled out by the non-self-terminating assumption.  In this case, $\eta$ coincides with $\mathbf{p}$ over $[0,t]$.

Finally, let us assume that $\mathbf{p}^{-1}*\gamma*\mathbf{p}$ does not reduce around time $t+s$, and hence $\mathbf{p}^{-1}*\gamma$ is reduced.  In this case, $\eta$ will coincide with $\mathbf{p}^{-1}$ over $[l(\eta)-t, l(\eta)]$ unless the initial segment of $\mathbf{p}^{-1}$ completely cancels $\gamma$.  Again, as noted above, this is ruled out by the non-self-terminating assumption.  Thus, the final segment of $\eta$ coincides with $\mathbf{p}^{-1}$ and the second part of the Lemma is proven.
 
\end{proof}

Note that, in our previous lemma, we can always (by reversing $\gamma$ if need be) get the reduced loop $\eta$ to coincide with ${\bf p}$ on the interval $[0,t]$.


\section{$\pi_1$-hull and structure theory}

This section is devoted to understanding the structure of an arbitrary compact $1$-dimensional geodesic space $X$. In the first subsection, we will introduce the {\it $\pi_1$-hull} $Conv(X)$ of $X$, and see that $Conv(X)$ is the ``homotopically essential'' part of the space $X$. In the second subsection, we introduce the notion of {\it branch point} of $X$, and use the set of branch points in $Conv(X)$ to analyze the structure of the $\pi_1$-hull.

\subsection{Structure theory: general case}

\begin{Def}\label{hull} Given a compact geodesic space $X$ of topological dimension one, we define the {\it $\pi_1$-hull} of $X$, denoted $Conv(X)$, as the union of (the images of) all non-constant geodesic loops in $X$.  A space satisfying $X=Conv(X)$ is said to be {\it $\pi_1$-convex}.
\end{Def}

Recall that geodesic loops are both rectifiable, and cyclically reduced (and hence have minimal finite length in their free homotopy class, see Corollary \ref{semunique}). Note that, in the special case where $X$ is contractible (e.g. if $X$ is an $\mathbb R$-tree), the $\pi_1$-hull is empty. We will establish some structure theory for arbitrary compact $1$-dimensional geodesic space $X$, and show that all homotopy information about $X$ is actually carried by its $\pi_1$-hull. To begin, let us show how to extend some geodesic paths to geodesic loops.

\begin{Lem}\label{extend}
Let $\mathbf{p}$ be a non-self-terminating and non-self-originating geodesic path whose endpoints lie in $Conv(X)$.  Then $\mathbf{p}$ can be extended to a geodesic loop.  
\end{Lem}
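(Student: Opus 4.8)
The plan is to use the hypothesis that both endpoints lie in $Conv(X)$ to surround $\mathbf{p}$ by two geodesic loops, and then to apply Lemma \ref{p-l-p} once at each end to guarantee that $\mathbf{p}$ survives the passage to the cyclically reduced representative. Write $p=\mathbf{p}(0)$ and $q=\mathbf{p}(t)$; the non-self-terminating hypothesis forces $p\neq q$ (otherwise $\mathbf{p}(t)=p\in\mathbf{p}([0,t))$). Since $p,q\in Conv(X)$, Definition \ref{hull} provides non-constant geodesic loops $\alpha$ based at $p$ and $\beta$ based at $q$, both cyclically reduced; and a geodesic path is reduced (Corollary \ref{I.24}), so $\mathbf{p}$ and $\mathbf{p}^{-1}$ are reduced paths. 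I would consider the based loop
\[
\Lambda \;:=\; \alpha * \mathbf{p}^{-1} * \beta * \mathbf{p}
\]
(traversing $\mathbf{p}$, then $\beta$, then $\mathbf{p}^{-1}$, then $\alpha$), based at $p$, and show that the unique cyclically reduced loop in its free homotopy class --- which exists and is the geodesic loop of that class by Corollary \ref{semunique} --- contains the image of $\mathbf{p}$ as a subpath.

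\textbf{The two local reductions.} The crucial inputs are two applications of Lemma \ref{p-l-p}. At the $q$-end, $\mathbf{p}$ is non-self-terminating, so applying the lemma to $\mathbf{p}$ and the cyclically reduced loop $\beta$ (reversing $\beta$ if necessary, as permitted by the remark following that lemma), the reduced loop of $\mathbf{p}^{-1}*\beta*\mathbf{p}$ coincides with $\mathbf{p}$ on $[0,t]$; in particular the forward copy of $\mathbf{p}$ is retained in full, and the middle is a nontrivial cyclically reduced loop $\beta''$ at $q$ (nontrivial because the class of $\mathbf{p}^{-1}\beta\mathbf{p}$ is conjugate to the essential class $\beta\neq 1$). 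Symmetrically, at the $p$-end the path $\mathbf{p}^{-1}$ is non-self-terminating --- this is precisely the non-self-originating hypothesis on $\mathbf{p}$ --- so applying Lemma \ref{p-l-p} to $\mathbf{p}^{-1}$ and $\alpha$ retains a full copy of $\mathbf{p}^{-1}$ in the reduced loop of $\mathbf{p}*\alpha*\mathbf{p}^{-1}$, replacing $\alpha$ by a nontrivial cyclically reduced loop $\alpha''$ at $p$.

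\textbf{Assembly.} With these local reductions in hand, I would assemble the global reduction of $\Lambda$, which is freely homotopic to the cyclic concatenation $[\mathbf{p},\beta'',\mathbf{p}^{-1},\alpha'']$. The two applications of Lemma \ref{p-l-p} show that the corner around $q$ and the corner around $p$ are each reduced, hence each of the four junctions $\mathbf{p}\to\beta''$, $\beta''\to\mathbf{p}^{-1}$, $\mathbf{p}^{-1}\to\alpha''$, $\alpha''\to\mathbf{p}$ is reduced. The inductive argument of Corollary \ref{multiple-concat}, applied cyclically, then shows the whole cyclic word is cyclically reduced; being cyclically reduced it is the geodesic loop of its free homotopy class (Corollary \ref{semunique}), and since a full copy of $\mathbf{p}$ sits between $\alpha''$ and $\beta''$, this geodesic loop extends $\mathbf{p}$.

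\textbf{Main obstacle.} The step I expect to be hardest is guaranteeing that one and the same full copy of $\mathbf{p}$ survives both reductions simultaneously. The danger is the tangential configuration in which $\mathbf{p}$ runs along $\beta$ as it approaches $q$ (or along $\alpha$ as it leaves $p$): there exactly one of the two junctions at that corner must reduce no matter how one orients the loop, so a priori the $q$-reduction could erase the $p$-end of the very copy of $\mathbf{p}$ that the $p$-reduction preserves, and conversely. Resolving this uses the non-self-originating and non-self-terminating hypotheses in full strength: they prevent $\mathbf{p}$ from wrapping entirely around $\alpha$ or $\beta$, which, together with $p\neq q$ so that the two corners meet $X$ at distinct points, keeps the two reductions from propagating into one another, leaving the forward copy of $\mathbf{p}$ emerging from the $q$-reduction untouched by the $p$-reduction. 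Making this interaction precise --- choosing the orientations of $\alpha$ and $\beta$ and verifying that the two reduced corner loops share the common full segment $\mathbf{p}$ --- is the technical heart of the argument.
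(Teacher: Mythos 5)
Your overall strategy is in the same spirit as the paper's (surround $\mathbf{p}$ by geodesic loops at its endpoints and use Lemma \ref{p-l-p} to control the corner reductions), but the assembly step contains a genuine gap, and it is precisely the gap you flag as the ``main obstacle'' without resolving it. The claim that the four junctions of the cyclic word $[\mathbf{p},\beta'',\mathbf{p}^{-1},\alpha'']$ can all be made reduced is false in the most generic situation, not just in a degenerate tangential one. Take $X$ a wedge of two circles $A$, $B$ at a point $v$, with $p\in A$, $q\in B$, and $\mathbf{p}$ the embedded arc from $p$ to $q$ through $v$. At $q$ there are only two germs of geodesics (the two directions along $B$), $\mathbf{p}$ arrives along one of them, and $\beta$ must depart along one germ and arrive back along the other; hence for \emph{either} orientation of $\beta$ exactly one of the junctions $\mathbf{p}\to\beta$, $\beta\to\mathbf{p}^{-1}$ reduces. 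This happens whenever $q$ is not a branch point, so Corollary \ref{multiple-concat} cannot be applied to your cyclic word. Moreover, the reduced loop of $\mathbf{p}^{-1}*\beta*\mathbf{p}$ produced by Lemma \ref{p-l-p} retains only \emph{one} full copy of $\mathbf{p}$; the return portion is generally a truncated $\mathbf{p}^{-1}$ glued to a truncated $\beta$, so your ``$\beta''$ at $q$'' is not well-defined, and the two applications of Lemma \ref{p-l-p} (one based at $p$, one based at $q$) cannot simply be spliced into a single cyclic word.

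The failure mode is also not the one you describe: it is not the two corner reductions ``propagating into one another,'' but a single cyclic reduction propagating \emph{into $\mathbf{p}$ itself} and destroying it as a subpath of the cyclically reduced representative. In the example above, with $\alpha$ oriented one way the cyclic reduction eats the initial arc of $\mathbf{p}$ and the resulting geodesic loop no longer contains $\mathbf{p}$ as a subpath (its image still covers $\mathbf{p}$, but that is weaker than what Lemma \ref{disting} later needs). The non-self-terminating/originating hypotheses only rule out the extreme case of total cancellation of $\mathbf{p}$; they do not prevent this partial cancellation. The paper's proof deals with this in two ways you would need to replicate: first, it splits $\mathbf{p}=\mathbf{p}_2*\mathbf{p}_1$ at its \emph{midpoint} and conjugates both endpoint loops back to that common basepoint, so the concatenation $\mathbf{c}_2*\mathbf{p}_2*\mathbf{p}_1*\mathbf{c}_1$ contains a single copy of $\mathbf{p}$ and has exactly one potentially unreduced junction ($\mathbf{c}_1$ against $\mathbf{c}_2$); second, when that junction does reduce and the cancellation eats into $\mathbf{p}$ (Cases IIa and IIb), it abandons the cyclic reduction of the original word and instead \emph{constructs a different cyclically reduced loop} from spliced sub-paths (e.g.\ $\mathbf{a}_2*\mathbf{d}*\mathbf{c}_2*\mathbf{p}_2*\mathbf{p}_1$) that provably contains $\mathbf{p}$. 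Your outline has no analogue of either device, and without one the argument does not go through.
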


\begin{proof}
Subdivide $\mathbf{p}$ as $\mathbf{p}=\mathbf{p}_2*\mathbf{p}_1$ meeting at $\mathbf{p}(t/2)$.  Note that $\mathbf{p}_1^{-1}$  and $\mathbf{p}_2$ are non-self-terminating.  As $\mathbf{p}(0), \mathbf{p}(t) \in Conv(X)$, there exist $\gamma_1, \gamma_2$ geodesic loops passing through (and parametrized with basepoints at) the points $\mathbf{p}(0)$ and $\mathbf{p}(t)$, respectively.  Let $\eta_1$ and $\eta_2$ be the reduced loops in the homotopy classes (based at $\mathbf{p}(t/2)$) for $\mathbf{p}_1*\gamma_1*\mathbf{p}_1^{-1}$ and $\mathbf{p}_2^{-1}*\gamma_2*\mathbf{p}_2$, respectively.  After re-orienting the $\gamma_i$ if necessary, by Lemma \ref{p-l-p} we have
\[\eta_1|_{[l(\eta_1)-t/2, l(\eta_1)]} = \mathbf{p}_1|_{[0,t/2]}\]
\[\eta_2|_{[0, t/2]} = \mathbf{p}_2|_{[0,t/2]}.\]

\noindent We can write $\eta_1=\mathbf{p}_1*\mathbf{c}_1$, $\eta_2=\mathbf{c}_2*\mathbf{p}_2$; these are both cyclically reduced.  We now have a few possible cases.

\vskip 5pt

\textbf{Case I:} $\mathbf{c}_1*\mathbf{c}_2$ is a reduced path.  Consider the closed path $\eta_2*\eta_1 = \mathbf{c}_2*\mathbf{p}_2*\mathbf{p}_1*\mathbf{c}_1$, which extends $\mathbf{p}$.  It is a reduced path as $\eta_i$ and $\mathbf{p}$ are reduced.  Under the assumption for this case, it is also cyclically reduced.  This is the geodesic loop we were seeking.

\vskip 5pt

\textbf{Case II:} $\mathbf{c}_1*\mathbf{c}_2$ is not reduced.  We will separately consider the two cyclic permutations of the path $\eta_2*\eta_1$ which might fail to be reduced paths.

\vskip 5pt

\begin{figure}
\begin{center}
\includegraphics[width=3.5in]{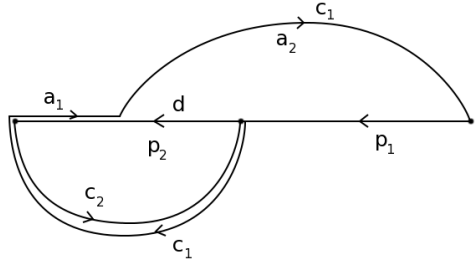}
\caption{Path configuration for Case IIa.}\label{case_2_fig}
\end{center}
\end{figure}

\textbf{Case IIa:} (See Figure \ref{case_2_fig}.)  Suppose the cyclic permutation of $\eta_2*\eta_1$ given by $\mathbf{c}_1*\mathbf{c}_2*\mathbf{p}_2*\mathbf{p}_1$ reduces to a geodesic loop not containing $\mathbf{p}$.  This happens only if $\mathbf{c}_1$ totally cancels $\mathbf{c}_2$ and then part of $\mathbf{p}$.  In such a case, we can write in reduced form $\mathbf{c}_1=\mathbf{a}*\mathbf{c}_2^{-1}$ where $\mathbf{a}*\mathbf{p}_2$ is reducible.  Write $\mathbf{a}=\mathbf{a}_2*\mathbf{a}_1$ where $\mathbf{a}_1$ is the maximal sub-path with image along $\mathbf{p}$.  We must have both $\mathbf{a}_i$ nontrivial: $\mathbf{a}_1$ since $\mathbf{a}$ partially cancels $\mathbf{p}$, and $\mathbf{a}_2$ by the fact that $\mathbf{p}_1*\mathbf{c}_1=\mathbf{p}_1*\mathbf{a}_2*\mathbf{a}_1*\mathbf{c}_2^{-1}$ is reduced  (if $\mathbf{a}_2$ is trivial, $\mathbf{p}_1*\mathbf{a}_1$ is reducible.)  Finally, let $\mathbf{d}$ be the geodesic sub-path of $\mathbf{p}$ connecting $\mathbf{p}(t/2)$ to the initial point of $\mathbf{a}_2$.

We record the following facts: $\mathbf{c}_2*\mathbf{p}_2*\mathbf{p}_1$ is reduced.  If $\mathbf{d}$ lies along $\mathbf{p}_2$, $\mathbf{d}*\mathbf{c}_2$ is reduced.  If $\mathbf{d}$ lies along $\mathbf{p}_1$, $\mathbf{d}*\mathbf{c}_2$ is still reduced, as (in this subcase) $\mathbf{c}_2$ coincides with the initial segment segment of $\mathbf{c}_1$, traversed backwards, and $\mathbf{c}_1*\mathbf{p}_1$ is cyclically reduced.  In addition, $\mathbf{a}_2*\mathbf{d}$ is reduced, by definition of $\mathbf{a}_2$.  Finally, $\mathbf{p}_1*\mathbf{a}_2$ is reduced as it is a sub-path of the reduced path $\mathbf{p}_1*\mathbf{c}_1$.

Consider, then, the closed loop $\mathbf{a}_2*\mathbf{d}*\mathbf{c}_2*\mathbf{p}_2*\mathbf{p}_1$.  It is cyclically reduced, by the remarks in the previous paragraph; hence it is a geodesic loop and it proves the Lemma.

\vskip 5pt

\textbf{Case IIb:} Suppose the cyclic permutation of $\eta_2*\eta_1$ given by $\mathbf{p}_2*\mathbf{p}_1*\mathbf{c}_1*\mathbf{c}_2$ reduces to a geodesic loop not containing $\mathbf{p}$.  Similarly to the case above, this happens only if $\mathbf{c}_2=\mathbf{c}_1^{-1}*\mathbf{a}$ in reduced form where $\mathbf{p}_1*\mathbf{a}$ is reducible.  Write $\mathbf{a}=\mathbf{a}_2*\mathbf{a}_1$ where $\mathbf{a}_2$ is the maximal sub-path of $\mathbf{a}$ along $\mathbf{p}$.  By the same arguments as in the previous case, both $\mathbf{a}_1$ and $\mathbf{a}_2$ are non-trivial.  Again, let $\mathbf{d}$ be the geodesic sub-path of $\mathbf{p}$ connecting $\mathbf{p}(t/2)$ to the endpoint of $\mathbf{a}_1$.

We record: $\mathbf{a}_1*\mathbf{p}_2*\mathbf{p}_1$ is reduced. $\mathbf{d}^{-1}*\mathbf{a}_1$ is reduced by definition of $\mathbf{a}_1$.  $\mathbf{c}_1*\mathbf{d}^{-1}$ is reduced because $\eta_1$ and $\eta_2$ are cyclically reduced.  Finally, $\mathbf{p}_1*\mathbf{c}_1$ is reduced.  

Consider $\mathbf{c}_1*\mathbf{d}^{-1}*\mathbf{a}_1*\mathbf{p}_2*\mathbf{p}_1$.  It is a geodesic loop extending $\mathbf{p}$ by the facts presented in the previous paragraph.

\end{proof}

To illustrate the usefulness of the previous lemmas, we note the following immediate corollary:

\begin{Cor} \label{cor_convex}
Suppose $X$ is a compact, 1-dimensional, geodesic metric space with $Conv(X)\neq \emptyset$.  Then $Conv(X)$ is path connected.  Furthermore, $Conv(X)$ is a strongly convex subset of $X$. \footnote{We say that a subset of a geodesic space is strongly convex provided that for every pair of points in the subset, {\it every} distance minimizer joining them also lies within the subset.}
\end{Cor}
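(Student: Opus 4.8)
The plan is to establish strong convexity first, and then to obtain path connectedness as an immediate consequence. For strong convexity, I would fix a pair of points $p,q \in Conv(X)$ and let $\gamma$ be an arbitrary distance minimizer joining them; the goal is to show that the image of $\gamma$ lies in $Conv(X)$. If $p=q$, the only distance minimizer is the constant path, whose image $\{p\}$ already lies in $Conv(X)$, so I would assume $p \neq q$ and parametrize $\gamma\colon[0,L]\to X$ by arclength, with $L=d(p,q)$. By Corollary \ref{geodesic}, $\gamma$ is then a non-constant geodesic.

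The key observation is that a distance minimizer is automatically non-self-terminating and non-self-originating, so that Lemma \ref{extend} applies. Indeed, if $\gamma(L)=\gamma(s)$ for some $s<L$, then $\gamma|_{[0,s]}$ would be a path from $p$ to $q$ of length $s<L=d(p,q)$, contradicting minimality; the non-self-originating condition follows from the symmetric argument applied to $\gamma|_{[s,L]}$. Since the endpoints $p,q$ lie in $Conv(X)$ by hypothesis, all the hypotheses of Lemma \ref{extend} are met, so $\gamma$ extends to a geodesic loop $\eta$. By Definition \ref{hull}, the image of $\eta$ is contained in $Conv(X)$, and since the image of $\gamma$ is contained in the image of $\eta$, I conclude that $\gamma \subseteq Conv(X)$. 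As $\gamma$ was an arbitrary distance minimizer, this is precisely strong convexity.

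Path connectedness then follows at once: given $p,q\in Conv(X)$, the geodesic space hypothesis on $X$ guarantees the existence of a distance minimizer joining them, which by the previous step lies entirely in $Conv(X)$, furnishing a path between $p$ and $q$ inside $Conv(X)$.

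I expect the only point requiring genuine care to be the verification that a distance minimizer satisfies the non-self-terminating and non-self-originating hypotheses of Lemma \ref{extend}; once this shortcutting argument is in place, the corollary is simply an assembly of Corollary \ref{geodesic}, Lemma \ref{extend}, and the definition of $Conv(X)$. The substantive geometric content has already been carried out in the proof of Lemma \ref{extend}, which is why this corollary is genuinely immediate.
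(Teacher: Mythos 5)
Your proposal is correct and follows essentially the same route as the paper: apply Corollary \ref{geodesic} to see the distance minimizer is a geodesic, note it is non-self-terminating and non-self-originating (a verification the paper dismisses as clear but that you usefully spell out), invoke Lemma \ref{extend} to extend it to a geodesic loop, and conclude both claims from the definition of $Conv(X)$.
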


\begin{proof}
Let $p,q\in Conv(X)$ be an arbitrary pair of distinct points, and let ${\bf p}$ be a distance minimizer joining the two points.  It is clearly non-self-terminating and non-self-originating. Since ${\bf p}$ is a distance minimizer, it is geodesic (Corollary \ref{geodesic}).  So by Lemma \ref{extend}, there is a geodesic loop extending it.  This
immediately shows that ${\bf p}$ itself lies in $Conv(X)$.  Both our claims follow.
\end{proof}

We can now establish some basic properties of the $\pi_1$-hull.

\begin{Prop}\label{closed}
Suppose $X$ is a compact, 1-dimensional, geodesic metric space with $Conv(X)\neq \emptyset$.  Then $Conv(X)$ is also a compact, 1-dimensional, geodesic metric space.\footnote{The authors are indebted to J. W. Cannon for suggesting this result and the main idea of its proof.} 
\end{Prop}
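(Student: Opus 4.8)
There are three assertions to establish: that $Conv(X)$, with the restricted metric, is geodesic, that it is one-dimensional, and that it is compact. The first two are immediate from what precedes. By Corollary \ref{cor_convex}, $Conv(X)$ is path connected and strongly convex, so any pair $p,q\in Conv(X)$ is joined by an $X$-distance minimizer lying entirely in $Conv(X)$; hence the intrinsic metric of $(Conv(X),d)$ coincides with $d$ and minimizers are realized, which is exactly the geodesic property. For dimension, $Conv(X)$ is a subspace of the compact---hence separable---metric space $X$, so the subspace theorem for covering dimension (Hurewicz and Wallman \cite{hurewicz-wallman}) gives $\dim Conv(X)\le \dim X=1$, while the presence of a non-constant geodesic loop (a path-connected set with more than one point) forces $\dim Conv(X)\ge 1$. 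Thus the whole proposition reduces to showing that $Conv(X)$ is \emph{closed} in $X$, since a closed subset of a compact metric space is compact.

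To prove closedness, fix $x\in\overline{Conv(X)}$ together with a sequence $x_n\to x$ with $x_n\in Conv(X)$; the goal is to exhibit a non-constant geodesic loop through $x$. Fix a geodesic loop $\gamma_0$ based at some $p_0\in Conv(X)$, and for each $n$ choose a distance minimizer $\delta_n$ from $x_n$ to $p_0$. A distance minimizer is a geodesic (Corollary \ref{geodesic}) and is non-self-terminating and non-self-originating, so Lemma \ref{p-l-p} applies to $\delta_n^{-1}*\gamma_0*\delta_n$: after re-orienting $\gamma_0$, the reduced loop $\eta_n$ in its homotopy class passes through $x_n$, agrees with $\delta_n$ on an initial segment, and satisfies $l(\eta_n)\le 2\,\mathrm{diam}(X)+l(\gamma_0)$. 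With this uniform length bound the $\eta_n$, parametrized proportionally to arc length on a fixed circle, are uniformly Lipschitz maps into the compact space $X$; the plan is to extract via the Arzel\`a--Ascoli theorem a uniform limit loop $\eta$ based at $x$, and to analyze it using the decomposition of reduced loops provided by Corollary \ref{redloop}.

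The main obstacle---and the crux of the proposition---is that this limit does not directly furnish a geodesic loop \emph{through} $x$. Because $X$ may be wild, a uniform limit of cyclically reduced loops need not be cyclically reduced; and, more seriously, the natural limiting loop tends to cyclically reduce away from $x$: the limit of the conjugates $\delta_n^{-1}*\gamma_0*\delta_n$ has the form $\delta^{-1}*\gamma_0*\delta$ with $\delta$ a minimizer from $x$ to $p_0$, and this cyclically reduces back to $\gamma_0$, whose image avoids $x$ whenever $x\notin\gamma_0$. Thus the real work is to guarantee that the geodesic loop we build contains $x$ without backtracking there. My plan to overcome this is to use, not a single conjugate, the fact that $Conv(X)$ accumulates at $x$ along (at least) two independent directions: choosing approach segments $\delta,\delta'$ from $x$ into the loop-bearing part that issue from $x$ in distinct directions, I would form the two path--loop--path loops through $x$ and splice them by the case analysis of Lemma \ref{extend}, producing a cyclically reduced---hence geodesic---loop on which $x$ genuinely lies (non-degeneracy being automatic, since such a loop issues from $x$ in two different directions). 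Verifying that two such independent directions always exist at an accumulation point of $Conv(X)$, and that the resulting concatenation is cyclically reduced, is the delicate point on which the argument turns.
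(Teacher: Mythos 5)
Your reduction of the proposition to the closedness of $Conv(X)$ is exactly right (and matches the paper: Corollary \ref{cor_convex} handles convexity, hence the geodesic property, and dimension and compactness are formal once closedness is known), and you have correctly located the real difficulty -- that the limit of the conjugates $\delta_n^{-1}*\gamma_0*\delta_n$ cyclically reduces back to $\gamma_0$ and so does not pass through $x$. But the fix you propose does not work, because the claim it rests on -- that $Conv(X)$ must accumulate at $x$ along two independent directions, so that two path--loop--path loops through $x$ can be spliced into a cyclically reduced loop -- is false in general. Consider an arc terminating at $x$ with a sequence of circles of rapidly shrinking circumference attached at points of the arc converging to $x$ (this is precisely the configuration of the paper's Case II, Figure \ref{case2fig}). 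Here $x \in \overline{Conv(X)}$, but every germ of geodesic issuing from $x$ into $Conv(X)$ agrees with the single arc: there is only one direction at $x$, so no pair of transverse approach segments $\delta, \delta'$ exists and no splicing \`a la Lemma \ref{extend} is available. The paper's resolution of this case is of a different nature: one builds a single geodesic loop that successively traverses infinitely many of the shrinking circles, spiralling into $x$; cyclic reducedness at $x$ is then guaranteed not by branching at $x$ but by the essential loops accumulating at $x$ along the returning arc. No finite splicing argument produces this loop.

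There is a second, smaller omission. In the complementary case, where some neighborhood $N$ of $x$ contains no geodesic loop at all but geodesic loops of $X$ pass arbitrarily close to $x$, your scheme produces nothing: the task there is not to build a new loop through $x$ but to show that the nearby loops $\gamma_i$ must themselves already pass through $x$. The paper does this by exploiting unique arc-connectedness of $N$ to force the two arcs of $\gamma_i \cap N$ on either side of the closest point to converge to arcs ending at $x$ and then to coincide with their limits all the way to $x$. Some argument of this kind is needed; the Arzel\`a--Ascoli limit you extract only gives a loop freely homotopic to $\gamma_0$, which carries no information about $x$. So the proposal correctly frames the problem but leaves both halves of the actual proof unestablished, and the strategy announced for the harder half cannot be repaired as stated.
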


\begin{proof}
Corollary \ref{cor_convex} covers convexity; we need only show $Conv(X)$ is closed to prove the result.
Let $p\in \overline{Conv(X)}$. The proof breaks down into two cases, according to whether there is an $\epsilon$-neighborhood of $p$ containing no geodesic loop.

\vspace{.5cm}

\noindent \textbf{Case I:} For some $\epsilon>0$, the $\epsilon$-neighborhood $N$ of $p$ contains no closed geodesic. In this case, note that points in $N$ are {\it uniquely} arcwise connected (by convention, arcs will be reduced, {\it rectifiable} paths, and uniqueness is of course up to reparametrization).  For if not, we can cyclically reduce the concatenation of two such arcs to obtain a geodesic loop in $N$, a contradiction.

We must then have that for every $i$ there exists a closed geodesic $\gamma_i$ in $X$ which intersects the $\epsilon/i$-neighborhood of $p$.  Let $\alpha_i$ be a component of $N\cap \gamma_i$; let $x_i\in \alpha_i$ be a point at minimum distance from $p$.  This point is unique, as otherwise we could form a simple closed curve in $N$ by connecting two such points to $p$ with minimizing paths and to each other along $\alpha_i$.  By choosing the component $\alpha_i$ appropriately, we may assume $d(p, x_i)<\epsilon/i$.  

The points $x_i$ divide $\alpha_i$ into two arcs, $A_i$ and $B_i$.  By passing to a subsequence, we may assume that the arcs $A_i$ converge to an arc $A$ joining the exterior of $N$ to $p$, and that the $B_i$ converge to an arc $B$ doing the same.  As $N$ is uniquely arc connected, $A\cap B$ is either $p$ alone or some geodesic segment ending at $p$.

If $A \cap B = \{p\}$, take $i$ very large, so that $A_i$ and $A$ very nearly agree over a comparatively long segment of $A$.  As $X$ is a geodesic space, we must then be able to connect $A_i$ to $A$ with short geodesics near the end points. Unless $A_i$ and $A$ in fact agree over a long segment, this contradicts unique arc-connectedness of $N$. The same argument holds for $B$ and $B_i$. We claim that for large 
$i$, $A_i$ and $B_i$ coincide with $A$ and $B$ {\it all the way to $p$}. If not, then one can look at the short arc along the corresponding $\alpha_i$ where the $A_i, B_i$ differ from $A,B$. This gives a short path joining the segments $A$, $B$ together. Concatenating this path with the portion of $A$ and $B$ going to $p$ provides a closed curve in $N$ which can be shortened to a geodesic, a contradiction. We conclude that $A_i$ and $B_i$ hit $p$, and $p$ belongs to the corresponding closed geodesic $\gamma_i$, as desired.

Similarly, if $A\cap B = I$, an interval with $p$ as an endpoint, take $i$ very large so that $A_i$ and $B_i$ coincide with $I$ over a comparatively large interval.  If they coincide all the way to $x_i$, then this contradicts the fact that $\alpha_i$ was geodesic. Otherwise, we can use the portion of $\alpha_i$ near $x_i$ where they differ to obtain a geodesic loop in $N$, again a contradiction. This completes the proof of Case I.

\vspace{.5cm}

\begin{figure}
\begin{center}
\includegraphics[width=3.5in]{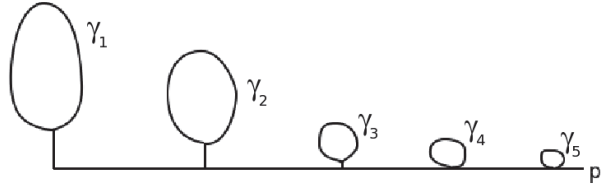}
\caption{The situation for Case II}\label{case2fig}
\end{center}
\end{figure}

\noindent \textbf{Case II:} (See Figure \ref{case2fig}.) For every $\epsilon>0$, the $\epsilon$-neighborhood $N$ of $p$ contains a closed geodesic.  If $p$ belongs to one such geodesic, we are done.  If not, connect each closed geodesic to $p$ by a distance-minimizing path.  Any two such paths must coincide on some interval with $p$ as an endpoint, otherwise $p$ belongs to a closed geodesic formed by these paths and the closed geodesics to which they connect.  Thus we may assume that $p$ lies at the endpoint of a geodesic path $\bf{p}$ to which a sequence of geodesic loops $\gamma_i$ with quickly decreasing length are connected by geodesic segments $\mathbf{t}_i$, also with quickly decreasing length.  We build a closed geodesic on which $p$ lies as follows.  Start at $p$.  Follow $\bf{p}$ to its intersection with $\mathbf{t}_1$.  Follow $\mathbf{t}_1^{-1}*\gamma_1*\mathbf{t}_1$.  Follow $\mathbf{p}$ to its intersection with $\mathbf{t}_2$ and repeat.  Continue this process; picking the sequence of paths to decrease in length sufficiently quickly gives a rectifiable curve $\bf{l}$ approaching $p$, defined over times $[0, t)$.  Set $\mathbf{l}(t)=p$. This loop is geodesic assuming each $\mathbf{t}_i$ is chosen to meet $\gamma_i$ and $\mathbf{p}$ in a single point each.

\end{proof}

Having analyzed the $\pi_1$-hull $Conv(X)$, we now turn our attention to the various connected components of $X\setminus Conv(X)$. 

\begin{Def}
For $Z$ a metric space, define an equivalence relation $\sim$ on points of $Z$ by setting $x\sim y$ if there exists a rectifiable path in $Z$ joining $x$ to $y$. A single $\sim$ equivalence class is called a {\it rectifiable component} of $Z$, and if all points in $Z$ are $\sim$ equivalent to each other, we say that $Z$ is {\it rectifiably connected}. If every point in $Z$ has a neighborhood base consisting of open, rectifiably connected sets, we say that $Z$ is {\it locally rectifiably connected}.
\end{Def}

For example, any length space is rectifiably connected. Since open metric balls in a length space are obviously rectifiably connected, length spaces are also locally rectifiably connected. 

Clearly, any rectifiable component of $Z$ is entirely contained within a single path component of $Z$. But one could a priori have a path component of $Z$ which breaks up into several distinct rectifiable components. 

\begin{Lem}\label{components}
Let $Z$ be a metric space, and consider the partition of $Z$ into 
(i) connected components, (ii) path components, and (iii) rectifiable components. If $Z$ is locally
rectifiably connected, then these three partitions of $Z$ coincide.
\end{Lem}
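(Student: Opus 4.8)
The plan is to exploit the general refinement chain among the three partitions and then collapse it using a connectedness (clopen) argument that the local hypothesis makes available. First I would record the two inclusions that hold with no hypothesis at all: every rectifiable path is in particular a path, and every path-connected subset is connected, so each rectifiable component is contained in a single path component, and each path component is contained in a single connected component. Thus for any point $x$, writing $R(x)$, $P(x)$, $C(x)$ for its rectifiable, path, and connected components respectively, we have $R(x) \subseteq P(x) \subseteq C(x)$. It therefore suffices to prove the reverse inclusion $C(x) \subseteq R(x)$; the three sets are then forced to be equal.

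The key step is to show that, under local rectifiable connectedness, each rectifiable component is an \emph{open} subset of $Z$. Given $y \in Z$, the hypothesis provides an open, rectifiably connected set $U$ with $y \in U$. Every $z \in U$ is joined to $y$ by a rectifiable path lying in $U \subseteq Z$, so $z \sim y$ and hence $U \subseteq R(y)$. This shows $R(y)$ is a neighborhood of each of its points, i.e. it is open. Since the rectifiable components partition $Z$ into pairwise disjoint open sets, each one is also closed (its complement being the union of the remaining open components). Hence every rectifiable component is clopen in $Z$.

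Finally I would run the connectedness argument. Fix a connected component $C$. Because rectifiable components refine connected components, $C$ is the disjoint union of all the rectifiable components it contains, and each of these is clopen in $Z$, hence clopen in $C$. A connected space cannot be decomposed into more than one nonempty clopen piece, so exactly one rectifiable component meets $C$, forcing $C$ to coincide with a single rectifiable component $R(x)$. Combined with the chain $R(x) \subseteq P(x) \subseteq C(x) = C = R(x)$, this yields $R(x) = P(x) = C(x)$, which is exactly the claim. The whole argument is short, and the only place the hypothesis does real work is the openness step; the \emph{hard part} is really just recognizing that local rectifiable connectedness is precisely what makes the rectifiable components open, after which the standard clopen-decomposition argument finishes the proof with no further input.
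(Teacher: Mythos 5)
Your proof is correct and follows essentially the same route as the paper: the key step in both is that local rectifiable connectedness makes each rectifiable component open, after which a clopen-decomposition argument applied to a connected component finishes the job. The only organizational difference is that the paper first cites the standard point-set fact that local path connectedness forces path components to coincide with connected components and then uses the openness argument only to collapse (ii) and (iii), whereas you collapse (i) directly onto (iii) and sandwich (ii) in between --- a marginally more self-contained presentation of the same idea.
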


\begin{proof}
Since $Z$ is locally rectifiably connected, it is also locally path connected. A basic result in point set topology asserts that for locally path connected spaces, path components coincide with connected components, giving the equivalence of partitions (i) and (ii). 

For the equivalence of (ii) and (iii), observe that since $Z$ is locally rectifiably connected, each rectifiable component is open. If we had a path component $P$ of $Z$ breaking up into several rectifiable components, this would give a partition of $P$ into pairwise disjoint open sets. But since the partitions (i) and (ii) coincide, $P$ is also a connected component of $Z$. We conclude that $P$ must consist of a single rectifiable component, as desired. 
\end{proof}

\begin{Cor}\label{rec-connected}
Suppose $X$ is a compact, 1-dimensional, geodesic metric space, and let $X\setminus Conv(X) = \coprod _{i\in I}Z_i$ be the decomposition of $X\setminus Conv(X)$ into connected components. Then each $Z_i$ is rectifiably connected, and $\overline {Z_i} \cap Z_j = \emptyset$ if $i\neq j$.
\end{Cor}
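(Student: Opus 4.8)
We must prove Corollary \ref{rec-connected}: for a compact, $1$-dimensional geodesic space $X$ with $X\setminus Conv(X)=\coprod_{i\in I}Z_i$ the decomposition into connected components, each $Z_i$ is rectifiably connected, and $\overline{Z_i}\cap Z_j=\emptyset$ for $i\neq j$.

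\textbf{Plan.} The strategy is to apply Lemma \ref{components} to the space $Z:=X\setminus Conv(X)$, which reduces the first assertion to showing that $Z$ is locally rectifiably connected; the separation statement will then follow from the coincidence of connected and rectifiable components together with the openness of $Z$ in $X$. First I would observe that $Conv(X)$ is closed in $X$ (Proposition \ref{closed}), so $Z=X\setminus Conv(X)$ is an open subset of the geodesic space $X$. The key point to establish is local rectifiable connectedness of $Z$: given any point $x\in Z$, I would choose $\epsilon>0$ small enough that the open metric ball $B(x,\epsilon)$ is contained in $Z$ (possible since $Z$ is open), and then show that this ball is rectifiably connected \emph{within} $Z$. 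The natural candidate for a rectifiable path between two points $y,y'\in B(x,\epsilon)$ is a distance minimizer in the ambient geodesic space $X$; since $X$ is geodesic, such a minimizer exists and is rectifiable.

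\textbf{The main obstacle.} The crux is to guarantee that the distance minimizer joining $y$ to $y'$ stays inside $Z$, i.e.\ does not wander into $Conv(X)$. Here I would invoke the strong convexity of $Conv(X)$ established in Corollary \ref{cor_convex}: because $Conv(X)$ is strongly convex, \emph{every} distance minimizer joining two points of $Conv(X)$ lies in $Conv(X)$. I would use this contrapositively. Suppose a minimizing geodesic $\gamma$ from $y$ to $y'$ (with $y,y'\notin Conv(X)$) were to meet $Conv(X)$ at some interior point $z$; then one must rule out that $\gamma$ can exit and re-enter $Z$ in a way that separates $y$ from $y'$. The cleanest route is to take $\epsilon$ small enough that $B(x,2\epsilon)\cap Conv(X)=\emptyset$ using closedness of $Conv(X)$, and then note that any distance minimizer between two points of $B(x,\epsilon)$ has length less than $2\epsilon$, hence stays inside $B(x,2\epsilon)$ and therefore inside $Z$. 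This makes $B(x,\epsilon)$ rectifiably connected inside $Z$, giving the neighborhood base required for local rectifiable connectedness; strong convexity of $Conv(X)$ is not even strictly needed once the metric ball around $x$ is chosen disjoint from $Conv(X)$.

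\textbf{Concluding the argument.} With $Z$ shown to be locally rectifiably connected, Lemma \ref{components} applies directly: the partitions of $Z$ into connected components, path components, and rectifiable components all coincide. In particular each connected component $Z_i$ is a rectifiable component, which is exactly the assertion that each $Z_i$ is rectifiably connected. For the separation statement $\overline{Z_i}\cap Z_j=\emptyset$ when $i\neq j$, I would argue as follows: local rectifiable connectedness forces each rectifiable component of $Z$ to be open in $Z$, and since $Z$ itself is open in $X$, each $Z_i$ is open in $X$. Suppose a point $w\in Z_j$ lay in $\overline{Z_i}$ with $i\neq j$. Since $Z_j$ is open in $X$, it is a neighborhood of $w$, so it would meet $Z_i$, contradicting that distinct components are disjoint. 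Hence $\overline{Z_i}\cap Z_j=\emptyset$, completing the proof. The only genuinely delicate input is the containment of short distance minimizers inside $Z$, which rests on the closedness of $Conv(X)$ from Proposition \ref{closed}.
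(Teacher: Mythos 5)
Your proposal is correct and follows essentially the same route as the paper: establish that $X\setminus Conv(X)$ is open (via Proposition \ref{closed}) and locally rectifiably connected, apply Lemma \ref{components}, and deduce the separation statement from openness of the components. The paper simply asserts that the open subset inherits local rectifiable connectedness from the geodesic space $X$, whereas you spell out the small-ball argument explicitly; both are fine.
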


\begin{proof}
$X$ is a geodesic space, so it is locally rectifiably connected. Proposition \ref{closed} tells us $Conv(X)\subseteq X$ is a closed subset. As $X\setminus Conv(X)$ is open, it inherits the property of being locally rectifiably connected. Lemma \ref{components} implies that the connected components $Z_i$ are all rectifiably connected. For the second statement, assume that $i\neq j$, and $x\in Z_j$.  As $Z_j$ is open, it is itself a neighborhood of $x$ which is disjoint from $Z_i$, and hence $x\not \in \overline{Z_i}$.
\end{proof}

\begin{Prop}\label{r-tree}
Suppose $X$ is a compact, 1-dimensional, geodesic metric space. Then each connected component of $X\setminus Conv(X)$ is a strongly convex subset of $X$, isometric to a $\mathbb R$-tree \footnote{See \cite{r_trees} for a reference on $\mathbb{R}$-trees.}.
\end{Prop}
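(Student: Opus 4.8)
The plan is to verify the two assertions about each component $Z_i$ separately: that it is strongly convex in $X$, and that, equipped with the restricted metric, it is an $\mathbb{R}$-tree. Two facts will be used repeatedly. First, every nonconstant geodesic loop has image inside $Conv(X)$ (Definition \ref{hull}), so since $Z_i\cap Conv(X)=\emptyset$, the component $Z_i$ contains no nonconstant geodesic loop. Second, reduced paths are unique in their homotopy class (Proposition \ref{path_unique}).

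\textbf{Step 1 (strong convexity).} Let $p,q\in Z_i$ and let $\gamma$ be a distance minimizer joining them, parametrized by arclength on $[0,L]$; it is a geodesic (Corollary \ref{geodesic}), hence reduced and injective. Suppose for contradiction that $\gamma$ meets $Conv(X)$, and let $[a,b]$ be the smallest parameter interval containing the (closed) set $\gamma^{-1}(Conv(X))$; since $p,q\notin Conv(X)$ we have $0<a\le b<L$. As $\gamma|_{[a,b]}$ is a distance minimizer between $\gamma(a),\gamma(b)\in Conv(X)$, strong convexity of $Conv(X)$ (Corollary \ref{cor_convex}) forces $\gamma([a,b])\subseteq Conv(X)$, while $\gamma([0,a))$ and $\gamma((b,L])$, being connected subsets of $X\setminus Conv(X)$ meeting $Z_i$, lie in $Z_i$. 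Since $Z_i$ is rectifiably connected (Corollary \ref{rec-connected}), I take a rectifiable path in $Z_i$ from $p$ to $q$ and reduce it (Proposition \ref{path_unique}) to a reduced path $\mathbf{\mu}$ with image in $Z_i$. Now cyclically reduce the based loop $\mathbf{\mu}^{-1}*\gamma$: by Theorem \ref{loop_unique} and Corollary \ref{redloop} its reduced representative has the form $\mathbf{r}^{-1}*\beta*\mathbf{r}$ with $\beta$ a geodesic loop whose image lies in $(\mathbf{\mu}\cup\gamma)\cap Conv(X)=\gamma([a,b])$. As $\gamma([a,b])$ is an injective arc, it carries no nonconstant loop, so $\beta$ is constant and $\mathbf{\mu}^{-1}*\gamma$ is nullhomotopic rel basepoint. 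By Proposition \ref{path_unique}, $\gamma=\mathbf{\mu}$ up to reparametrization, contradicting that $\gamma$ meets $Conv(X)$ while $\mathbf{\mu}$ does not. Hence $\gamma\subseteq Z_i$, which is strong convexity; in particular $(Z_i,d|_{Z_i})$ is a geodesic space whose metric agrees with the restriction of $d$.

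\textbf{Step 2 (uniqueness of geodesics and the tree property).} The same cyclic-reduction argument carried out entirely inside $Z_i$ shows the distance minimizer between two points of $Z_i$ is unique: two such minimizers are reduced paths in the same homotopy class, because the connecting loop reduces to a geodesic loop forced to be constant (its image would lie in $Z_i\cap Conv(X)=\emptyset$), so they are equal by Proposition \ref{path_unique}. To upgrade this to the $\mathbb{R}$-tree property I show every geodesic triangle in $Z_i$ is a tripod. Given $x,y,z\in Z_i$, uniqueness of geodesics implies $\gamma_{xy}$ and $\gamma_{xz}$ agree on a maximal initial segment ending at a point $w$, and that the residual geodesics $\gamma_{wy}$, $\gamma_{wz}$ meet only at $w$ (a common later point would produce two distinct distance minimizers from $w$, since subpaths of minimizers minimize). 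Then $\gamma_{wy}^{-1}*\gamma_{wz}$ is reduced (Corollary \ref{multiple-concat}), so by uniqueness of reduced paths it equals $\gamma_{yz}$; thus $\gamma_{yz}$ passes through $w$ and the triangle is a tripod. A geodesic space all of whose triangles are tripods is $0$-hyperbolic, hence an $\mathbb{R}$-tree.

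The main obstacle is Step 1: a priori the distance minimizer between two points of $Z_i$ could dip into $Conv(X)$, and the whole argument hinges on the observation that its trace in $Conv(X)$ is a single injective arc, so the comparison loop carries no essential geodesic loop and must reduce to a point. Everything afterward is a mechanical application of the uniqueness of reduced paths together with the absence of nonconstant geodesic loops in $Z_i$. It is worth noting that phrasing Steps 2 in terms of rectifiable geodesics throughout (rather than arbitrary topological arcs) sidesteps any difficulty posed by possibly non-rectifiable embedded curves.
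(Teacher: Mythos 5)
Your proof is correct and follows essentially the same route as the paper: concatenate the distance minimizer with a reduced rectifiable path inside the component, observe that the resulting geodesic loop would have to lie in $Conv(X)$ yet inside the image of the concatenation, force it to be constant, and conclude the two paths coincide by uniqueness of reduced paths; the $\mathbb{R}$-tree part likewise matches via unique arcwise connectedness and $0$-hyperbolicity. The only differences are cosmetic --- you extract the contradiction by noting the trace of the minimizer in $Conv(X)$ is a single injective arc, and you verify $0$-hyperbolicity via the tripod criterion rather than the containment $\overline{xz}\subseteq \overline{xy}\cup\overline{yz}$ --- plus a harmless slip in the concatenation order convention (${\bf q}*{\bf p}$ traverses ${\bf p}$ first).
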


\begin{proof}
If $Z$ is any connected component of $X\setminus Conv(X)$, Corollary \ref{rec-connected} tells us $Z$ is rectifiably connected. Let $x, y \in Z$, and let $\eta \subset X$ be any distance minimizer from $x$ to $y$. To show that $Z$ is strongly convex, we need to argue that $\eta$ lies in $Z$. By way of contradiction, let us assume that $\eta$ passes through $Conv(X)$. As $Z$ is rectifiably connected, we can find a rectifiable path $\eta ^\circ_Z \subset Z$ joining $x$ to $y$. Viewing the path $\eta ^\circ_Z$ as a path in $X$, we can apply Proposition \ref{path_unique} to obtain a reduced path $\eta_Z$ homotopic (rel. endpoints) to $\eta ^\circ_Z$. The path $\eta_Z$ has image contained within the image of $\eta ^\circ_Z$, forcing $\eta_Z \subset Z$. Now concatenating the two paths $\eta $ and $\eta_Z$ yields a closed, rectifiable, loop $\gamma^\circ = \eta*\eta_Z$. Consider the geodesic loop $\gamma$ obtained by cyclically reducing the loop $\gamma^\circ$. From the definition of $Conv(X)$, we have that $\gamma \subset Conv(X)$. But $\gamma$ was obtained by cyclically reducing the concatenation $\eta*\eta_Z$, where $\eta_Z$ was a reduced path contained entirely in $Z \subset X\setminus Conv(X)$. Since $\eta_Z$ must be fully cancelled in the cyclic reduction, but is itself a reduced path, it follows that the image of $\eta_Z$ must be contained in the image of $\eta$. The path $\eta$ is a distance minimizer, hence an embedded path. As $\eta_Z$ has image lying within the same set, and joins together the two endpoints, it must be a reparametrization of $\eta$. This yields a contradiction, as $\eta_Z \subset X \setminus Conv(X)$, while $\eta \cap Conv(X) \neq \emptyset$. We conclude that the distance minimizer $\eta$ must satisfy $\eta \subset Z$, and hence that $Z$ is indeed strongly convex.

Next we note that, given any two points $p, q \in Z$, there is a {\it unique} reduced rectifiable path $\overline{pq}$ in $Z$ (up to reparametrization) joining $p$ to $q$, i.e. $Z$ is uniquely arcwise connected. For if $\eta, \eta^\prime$ were two such paths, whose images in $Z$ did not coincide, we could cyclically reduce $\eta* \eta^\prime$ to obtain a geodesic loop in $Z$, contradicting $Z \cap Conv(X) = \emptyset$. It is now easy to see that $Z$ is a $0$-hyperbolic geodesic space: given any three points $x, y, z\in Z$, consider the distance minimizers $\overline {xy}, \overline{yz}$, and $\overline {xz}$. Reducing the concatenation $\overline {xy}* \overline{yz}$ gives us a reduced rectifiable path whose image lies within the set $\overline {xy} \cup \overline{yz}$, and joins $x$ to $z$. Since $\overline {xz}$ is another reduced rectifiable path joining these two points, the uniqueness kicks in and forces $\overline {xz}\subseteq \overline {xy} \cup \overline{yz}$. Finally, it is a well-known result that $0$-hyperbolic geodesic spaces are precisely $\mathbb R$-trees, concluding our proof.
\end{proof}

An immediate application of Proposition \ref{r-tree} is:

\begin{Cor}\label{cont-conv-nonempty}
Suppose $X$ is a compact, 1-dimensional, geodesic metric space. Then the following three statements are equivalent: (i) $X$ is contractible, (ii) $Conv(X)=\emptyset$, and (iii) $X$ is an $\mathbb R$-tree.
\end{Cor}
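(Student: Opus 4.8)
The plan is to prove the three equivalences by establishing the cyclic chain $(iii)\Rightarrow(i)\Rightarrow(ii)\Rightarrow(iii)$. The structural content has essentially already been extracted in Proposition \ref{r-tree}, so what remains is to thread the three conditions together.

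First I would handle $(iii)\Rightarrow(i)$ by recalling the standard fact that every $\mathbb R$-tree is contractible. Concretely, fix a basepoint $x_0$; since an $\mathbb R$-tree is uniquely geodesic, each point $x$ is joined to $x_0$ by a unique arc $\overline{x_0x}$, and one defines a contraction $H(x,s)$ by sliding $x$ toward $x_0$ along $\overline{x_0x}$, placing $H(x,s)$ at the point of $\overline{x_0x}$ at distance $(1-s)\,d(x_0,x)$ from $x_0$. Then $H(\cdot,0)=\mathrm{id}$ and $H(\cdot,1)\equiv x_0$, and the only point needing verification is joint continuity of $H$, which follows from $0$-hyperbolicity (the geodesics emanating from $x_0$ to nearby points fellow-travel).

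Next, for $(i)\Rightarrow(ii)$, I would argue by contradiction. If $X$ is contractible then it is simply connected, so every loop is freely null-homotopic, i.e. freely homotopic to the constant loop. Suppose $Conv(X)\neq\emptyset$; then by definition there is a non-constant geodesic loop $\gamma$, which is cyclically reduced. But the constant loop is also cyclically reduced and lies in the same (trivial) free homotopy class as $\gamma$, so the uniqueness clause of Theorem \ref{loop_unique} (equivalently, Corollary \ref{semunique}) forces $\gamma$ to be a cyclic reparametrization of the constant loop, contradicting the fact that $\gamma$ is non-constant. Hence $Conv(X)=\emptyset$.

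Finally, $(ii)\Rightarrow(iii)$ is immediate from Proposition \ref{r-tree}: if $Conv(X)=\emptyset$ then $X\setminus Conv(X)=X$, and $X$ is connected since any geodesic space is path-connected; thus $X$ is a single connected component of $X\setminus Conv(X)$, which Proposition \ref{r-tree} identifies as an $\mathbb R$-tree. I do not expect a genuine obstacle here: all of the hard work lives in Proposition \ref{r-tree}, and the only step requiring any care is the continuity check in the contraction of $(iii)\Rightarrow(i)$, which is a routine consequence of the tree structure.
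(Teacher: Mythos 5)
Your proof is correct and matches the paper's intent: the paper gives no explicit argument, presenting the corollary as an immediate consequence of Proposition \ref{r-tree}, and your cyclic chain $(iii)\Rightarrow(i)\Rightarrow(ii)\Rightarrow(iii)$ simply fills in the standard details (contractibility of $\mathbb R$-trees, uniqueness of cyclically reduced representatives from Theorem \ref{loop_unique}, and Proposition \ref{r-tree} applied to the single component $X\setminus\emptyset=X$). Nothing further is needed.
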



Now that we understand the connected components of $X \setminus Conv(X)$, let us see how these attach together. In view of Corollary \ref{rec-connected}, distinct connected components of $X \setminus Conv(X)$ do not interact. We now study how they attach to $Conv(X)$.

\begin{Prop}\label{attach}
Suppose $X$ is a compact, non-contractible, 1-dimensional, geodesic metric space (so $Conv(X)\neq \emptyset$). Let $X\setminus Conv(X) = \coprod _{i\in I}Z_i$ be the decomposition of $X\setminus Conv(X)$ into connected components. Then we have that each $\overline{Z_i} \cap Conv(X)$ consists of a single point $x_i$, and the (metric) completion of each $Z_i$ is precisely $Z_i \cup \{x_i\}$. Moreover, the index set $I$ is countable, and $\displaystyle \lim_{i\to \infty} diam (Z_i) = 0$.
\end{Prop}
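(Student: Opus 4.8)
The plan is to first pin down the metric structure of $\overline{Z_i}$, then prove the single-point attachment, and finally extract countability and the diameter estimate from compactness. I begin with the easy containment. Since $Z_i$ is open (it is a component of the open set $X\setminus Conv(X)$, which is open by Proposition \ref{closed}) and $X$ is connected with $Conv(X)\neq\emptyset$, the topological boundary $\overline{Z_i}\setminus Z_i$ is nonempty; and by Corollary \ref{rec-connected} we have $\overline{Z_i}\cap Z_j=\emptyset$ for $j\neq i$, so $\overline{Z_i}\subseteq Z_i\cup Conv(X)$, whence $\overline{Z_i}\setminus Z_i=\overline{Z_i}\cap Conv(X)$. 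The technical heart of this preliminary step is to show that $\overline{Z_i}$, with the restriction of $d$, is an isometrically embedded $\mathbb R$-tree coinciding with the metric completion of $Z_i$. Strong convexity of $Z_i$ (Proposition \ref{r-tree}) gives $d|_{Z_i\times Z_i}=d_{Z_i}$, where $d_{Z_i}$ is the intrinsic ($\mathbb R$-tree) metric; since $X$ is compact, hence complete, every $d$-convergent sequence in $Z_i$ is $d_{Z_i}$-Cauchy, and passing to limits defines a distance-preserving bijection from the completion $\widehat{Z_i}$ onto $\overline{Z_i}$. As completions of $\mathbb R$-trees are $\mathbb R$-trees, $\overline{Z_i}$ is itself an $\mathbb R$-tree, its arcs are distance minimizers in $X$, and $\overline{Z_i}$ is precisely the completion of $Z_i$. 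In particular, once the single-point claim is proven, the assertion about the completion being $Z_i\cup\{x_i\}$ is immediate.

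The crux is showing $\overline{Z_i}\cap Conv(X)$ is a single point. Suppose instead $x\neq y$ both lie in it, and let $\alpha=[x,y]$ be the unique arc joining them in the $\mathbb R$-tree $\overline{Z_i}$. By the previous paragraph $\alpha$ is a distance minimizer in $X$, and since $x,y\in Conv(X)$, strong convexity of $Conv(X)$ (Corollary \ref{cor_convex}) forces $\alpha\subseteq Conv(X)$. Fix an interior point $w$ of $\alpha$, so $w\in Conv(X)$. In the tree $\overline{Z_i}$, removing $w$ separates $x$ from $y$ into two open components $C_x\ni x$ and $C_y\ni y$; as $Z_i$ is dense in $\overline{Z_i}$, I may choose $z_1\in C_x\cap Z_i$ and $z_2\in C_y\cap Z_i$. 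Then the arc $[z_1,z_2]$ must pass through $w$. But $[z_1,z_2]$ is a distance minimizer in $X$ between two points of $Z_i$, so strong convexity of $Z_i$ forces $[z_1,z_2]\subseteq Z_i$, contradicting $w\in Conv(X)$ (which is disjoint from $Z_i$). Hence the attaching set is a single point $x_i$, and $\overline{Z_i}=Z_i\cup\{x_i\}$. I expect this step to be the main obstacle: it is where the $\mathbb R$-tree structure of $\overline{Z_i}$, the strong convexity of \emph{both} $Z_i$ and $Conv(X)$, and the density/separation argument all have to be combined correctly.

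For countability and the diameter estimate I would show that for each fixed $\epsilon>0$ only finitely many indices $i$ satisfy $diam(Z_i)\geq\epsilon$. If $diam(Z_i)\geq\epsilon$, then there are points of $Z_i$ at mutual distance $\geq\epsilon/2$, and the triangle inequality through $x_i$ produces a point of $Z_i$ at distance $\geq\epsilon/4$ from $x_i$; moving out along the corresponding arc in $\overline{Z_i}$ gives a point $z_i'\in Z_i$ with $d(z_i',x_i)=\epsilon/4$. The key observation is the gateway property: since $\partial Z_i=\{x_i\}$, any path leaving $Z_i$ must pass through $x_i$. Thus for $i\neq j$ every path from $z_i'$ to $z_j'$ passes through both $x_i$ and $x_j$, giving $d(z_i',z_j')\geq d(z_i',x_i)+d(x_j,z_j')=\epsilon/2$ (this holds even if $x_i=x_j$). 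Therefore the set $\{z_i':diam(Z_i)\geq\epsilon\}$ is $\epsilon/2$-separated, and compactness of $X$ forces it, and hence this index set, to be finite. Taking $\epsilon=1/n$ and forming a countable union exhibits $I$ as countable, and the same uniform bound shows that all but finitely many $Z_i$ have diameter below any prescribed threshold, i.e. $\lim_{i\to\infty}diam(Z_i)=0$.
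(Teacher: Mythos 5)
Your proposal is correct, and it reaches the single-point conclusion by a genuinely different mechanism than the paper. The paper works with the abstract completion $Z_i'$ of the intrinsic metric, proves injectivity of the extension map $\rho$ on $Z_i'\setminus Z_i$ by turning a would-be identification into a geodesic loop through $Z_i$, and then rules out two distinct attaching points by concatenating a minimizer through $Z_i'$ with a minimizer through $Conv(X)$ and invoking Lemma \ref{concatenation} to see the result is cyclically reduced --- again contradicting the definition of $Conv(X)$. You instead observe that strong convexity gives $d_{Z_i}=d|_{Z_i}$, so the completion embeds isometrically onto $\overline{Z_i}$ (which makes the paper's injectivity step automatic), and then you kill two attaching points purely with the two strong-convexity statements already in hand: the arc $[x,y]$ in the tree $\overline{Z_i}$ must lie in $Conv(X)$ by Corollary \ref{cor_convex}, while density of $Z_i$ plus the separation property of trees plus strong convexity of $Z_i$ force its interior into $Z_i$. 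This avoids any appeal to reduced loops, cyclic reduction, or Lemma \ref{concatenation} in this proposition, at the cost of leaning a bit harder on the $\mathbb R$-tree formalism. For the countability/diameter claim your $\epsilon/2$-separated-set argument is a quantified version of the paper's ``no convergent subsequence'' argument, and it has the minor virtue of explicitly justifying, via the gateway property $\partial Z_i=\{x_i\}$, why a component of large diameter contains a point far from $Conv(X)$ --- a point the paper passes over quickly. Both routes are sound.
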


\begin{proof}
Let $Z_i^\prime$ denote the metric completion of the space $Z_i$. Observe that the metric completion of an $\mathbb R$-tree is again an $\mathbb R$-tree (this follows easily from the $0$-hyperbolicity characterization of $\mathbb R$-trees), so $Z_i^\prime$ is a bounded, complete, $\mathbb R$-tree. Since $Z_i$ is a connected dense subset of the $\mathbb R$-tree $Z_i^\prime$, we see that for each pair of distinct points $p\neq q \in Z_i^\prime \setminus Z_i$, the distance minimizer $\overline {pq} \subset Z_i^\prime$ joining them satisfies $\overline {pq} \cap (Z_i^\prime \setminus Z_i) = \{p, q\}$, and hence $\overline {pq} \cap Z_i \neq \emptyset$.

Since $Z_i \subset X$ is strongly convex, there is a natural surjective map $\rho:Z_i^\prime \rightarrow \overline {Z_i}$ which extends the identity map on $Z_i$. Clearly $\rho$ restricts to a surjection from $Z_i^\prime\setminus Z_i$ to the set $\overline{Z_i} \cap Conv(X)$. If $p,q \in Z_i^\prime \setminus Z_i$ with $p\neq q$ satisfy $\rho(p)=\rho(q)$, then the $\rho$-image of the distance minimizer $\overline {pq} \subset Z_i^\prime$ joining $p$ to $q$ gives us a geodesic loop in $X$ which passes through points in $Z_i \subset X\setminus Conv(X)$, a contradiction. We conclude that $\rho:Z_i^\prime\setminus Z_i\rightarrow \overline{Z_i} \cap Conv(X)$ is also an {\it injective} map, and hence a bijection. 

Assume $p\neq q$ are distinct points lying in the set $\rho(Z_i^\prime\setminus Z_i)$, and let $p^\prime, q^\prime \in Z_i^\prime$ be their $\rho$-preimages. Since $p^\prime \neq q^\prime$, the distance minimizer $\eta \subset Z_i^\prime$ joining them satisfies $\eta \setminus \{p^\prime, q^\prime\} \subset Z_i$. On the other hand, the points $p \neq q$ lie in $Conv(X)$, so by Lemma \ref{cor_convex}, we can find a distance minimizer $\eta^\circ$ joining them within the set $Conv(X)$. Look at the concatenation $\rho(\eta) * \eta^\circ$ of the reduced paths $\rho(\eta)$ and $\eta^\circ$. These give a closed rectifiable loop, which in view of the discussion above (and of Lemma \ref{concatenation}) is cyclically reduced. So this defines a geodesic loop, which passes through points in $Z_i$, a contradiction. Thus $\rho(Z_i^\prime\setminus Z_i)$ consists of at most one point. But $\overline{Z_i} \setminus Z_i$ must have at least one point, for otherwise $Z_i$ would be both open and closed in the connected set $X$, a contradiction (as we are assuming that $Conv(X)\neq \emptyset$). We conclude that $\overline{Z_i} \setminus Z_i=\overline{Z_i} \cap Conv(X) $ consists of a single point $x_i$, as desired.

Finally, for each natural number $n\in \mathbb N$, consider the set $I_n \subset I$ of indices such that the corresponding connected components $Z_i$ ($i\in I_n$) have diameter $\geq 1/n$. We claim this set is {\it finite}. For if not, one has an injection $i: \mathbb N \hookrightarrow I_n$. Choose a point $x_k \in Z_{i(k)}$ with the property that $d\big(x_k, Conv(X)\big) \geq 1/n$. Then the sequence $\{x_k\}$ in $X$ has no convergent subsequence, contradicting compactness. Since $I = \bigcup _{n\in \mathbb N}I_n$ is a countable union of finite sets, it is itself countable. The statement concerning the diameters of the $Z_i$ also follows.
\end{proof}

Summarizing what we have so far, we see that an arbitrary compact $1$-dimensional geodesic space $X$ 
consists of:
\begin{itemize}
\item its $\pi_1$-hull $Conv(X)$, which is itself a ($\pi_1$-convex) compact $1$-dimensional geodesic space, sitting as a strongly convex subset of $X$ (see Corollary \ref{cor_convex} and Proposition \ref{closed}), and 
\item a countable collection of compact $\mathbb R$-trees $\overline{ Z_i}$ (whose diameters are shrinking to zero), each of which is attached to the $\pi_1$-hull $Conv(X)$ along a single terminal vertex $x_i$ (see Proposition \ref{r-tree} and Proposition \ref{attach}).
\end{itemize}
This structural result has a few nice consequences.

\begin{Cor}\label{def-retract}
Let $X$ be a compact $1$-dimensional geodesic space, and assume $X$ is not contractible (so $Conv(X)\neq \emptyset$). Then $X$ deformation retracts onto its $\pi_1$-hull $Conv(X)$. In particular, the inclusion $Conv(X) \hookrightarrow X$ induces an isomorphism $\pi_1\big(Conv(X)\big) \cong \pi_1(X)$.
\end{Cor}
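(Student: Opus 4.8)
The plan is to build the deformation retraction directly from the structural description assembled just above. By Propositions \ref{closed}, \ref{r-tree}, and \ref{attach}, the space $X$ is the union of its $\pi_1$-hull $Conv(X)$ (a closed, strongly convex subset, by Corollary \ref{cor_convex}) together with a countable family of compact $\mathbb R$-trees $\overline{Z_i}$, each attached to $Conv(X)$ along a single point $x_i = \overline{Z_i}\cap Conv(X)$, and with $\operatorname{diam}(Z_i)\to 0$. The retraction should simply crush each tree $\overline{Z_i}$ onto its attaching point $x_i$ while fixing $Conv(X)$ pointwise, and the homotopy realizing this should sweep each tree to $x_i$ along its geodesics. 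Concretely, I would define $H\colon X\times[0,1]\to X$ by setting $H(p,t)=p$ for all $p\in Conv(X)$, and, for $p\in\overline{Z_i}$, letting $H(p,t)$ be the unique point on the geodesic arc $[x_i,p]\subset\overline{Z_i}$ lying at distance $(1-t)\,d(x_i,p)$ from $x_i$. The two definitions agree at $x_i$, and $H_0=\mathrm{id}$, $H_1$ has image in $Conv(X)$, while $H_t$ fixes $Conv(X)$ for all $t$; so once continuity is checked, $H$ is a strong deformation retraction of $X$ onto $Conv(X)$.

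Continuity on each closed piece is routine. On $Conv(X)\times[0,1]$ the map is just projection to the first factor. On $\overline{Z_i}\times[0,1]$ it is the standard geodesic contraction of an $\mathbb R$-tree onto a root point, which is continuous because $\mathbb R$-trees are uniquely geodesic ($CAT(0)$) spaces in which geodesics vary continuously with their endpoints. Moreover, each $Z_i$ is open in $X$ (its complement $Conv(X)$ is closed) and the components are pairwise separated by Corollary \ref{rec-connected}, so continuity of $H$ at any point of $X\setminus Conv(X)$ is immediate.

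The hard part, and the only real obstacle, is joint continuity at a point $(q,t)$ with $q\in Conv(X)$, since infinitely many trees may accumulate near $q$. The key estimate is that $d\big(H(p,t),p\big)\le \operatorname{diam}(\overline{Z_i})$ whenever $p\in\overline{Z_i}$ (and equals $0$ on $Conv(X)$), so that for any sequence $(p_n,t_n)\to (q,t)$ one has $d\big(H(p_n,t_n),q\big)\le \operatorname{diam}\big(\overline{Z_{i(p_n)}}\big)+d(p_n,q)$. Suppose this failed to tend to $0$; then along a subsequence the relevant tree-diameters stay bounded below by some $\epsilon_0/2>0$. Because $\operatorname{diam}(Z_i)\to 0$, only finitely many trees have diameter $\ge \epsilon_0/2$, so by pigeonhole infinitely many $p_n$ lie in a single such tree $\overline{Z_j}$. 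As $\overline{Z_j}$ is compact (being closed in the compact space $X$) and $p_n\to q\in Conv(X)$, the limit satisfies $q\in\overline{Z_j}\cap Conv(X)=\{x_j\}$, forcing $q=x_j$; but then $d\big(H(p_n,t_n),x_j\big)\le d(p_n,x_j)\to 0$, contradicting the assumed lower bound. Hence $H$ is continuous, and the shrinking-diameter conclusion of Proposition \ref{attach} is exactly what makes the argument go through.

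It then follows that $H$ is a (strong) deformation retraction of $X$ onto $Conv(X)$. In particular the inclusion $Conv(X)\hookrightarrow X$ is a homotopy equivalence, so it induces an isomorphism $\pi_1\big(Conv(X)\big)\cong\pi_1(X)$, as claimed.
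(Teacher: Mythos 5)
Your proof is correct and follows exactly the paper's approach: retract each compact $\mathbb R$-tree $\overline{Z_i}$ onto its attaching point $x_i$ along geodesics, glue these with the identity on $Conv(X)$, and use $\operatorname{diam}(Z_i)\to 0$ to get joint continuity at points of $Conv(X)$. The paper leaves the gluing/continuity step as an exercise (noting only that the shrinking diameters are the key), and your argument supplies precisely the missing details.
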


\begin{proof}
Each compact $\mathbb R$-tree $\overline{ Z_i}$ deformation retracts to the corresponding terminal vertex $x_i$. It is an easy exercise to check that these homotopies glue together to define a deformation retraction of $X$ to $Conv(X)$; that the $diam(Z_i)$ shrink to zero is key to the proof.
\end{proof}

As another application, we can now provide an alternate characterization of the $\pi_1$-hull of $X$: it is the unique minimal deformation retract of $X$.

\begin{Cor}\label{characterization-conv-hull}
Let $X$ be a compact $1$-dimensional geodesic space, and assume $X$ is not contractible (so $Conv(X)\neq \emptyset$). Assume we have a subset $X_\circ \subseteq X$ satisfying the following two properties: (i) $X$ deformation retracts to $X_\circ$, and (ii) if $X$ deformation retracts to some subset $Y\subseteq X$, then $X_\circ \subseteq Y$. Then $X_\circ$ coincides with the $\pi_1$-hull $Conv(X)$.
\end{Cor}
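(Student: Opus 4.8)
The plan is to establish the two inclusions $X_\circ \subseteq Conv(X)$ and $Conv(X) \subseteq X_\circ$ separately; the first is immediate from what we already have, and essentially all the content lies in the second. For the inclusion $X_\circ \subseteq Conv(X)$, I would simply invoke Corollary \ref{def-retract}, which tells us that $X$ deformation retracts onto $Conv(X)$. Taking $Y = Conv(X)$ in the minimality hypothesis (ii) then forces $X_\circ \subseteq Conv(X)$ directly.

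For the reverse inclusion, I would prove the stronger statement that \emph{every} subset $Y$ onto which $X$ deformation retracts must contain $Conv(X)$; applying this to $Y = X_\circ$ (which is legitimate by hypothesis (i)) then yields $Conv(X) \subseteq X_\circ$. So I would fix a deformation retract $Y$, with retraction $r:X\to Y$ homotopic to $\mathrm{id}_X$, and let $x\in Conv(X)$. By the definition of the $\pi_1$-hull (Definition \ref{hull}), the point $x$ lies on some non-constant geodesic loop $\gamma$, which is in particular cyclically reduced. The composite $r\circ\gamma$ is a loop whose image lies entirely inside $Y$, and pushing $\gamma$ along the retraction homotopy $H(\gamma(\cdot),t)$ exhibits a free homotopy (inside $X$) between $\gamma$ and $r\circ\gamma$. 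I would then invoke Cannon and Conner's uniqueness result (Theorem \ref{loop_unique}): the loop $r\circ\gamma$ is freely homotopic to a cyclically reduced loop whose image is contained in the image of $r\circ\gamma$, hence in $Y$; and by the uniqueness clause this cyclically reduced representative must be a cyclic reparametrization of $\gamma$ itself, since $\gamma$ is already cyclically reduced and lies in the same free homotopy class. Consequently the image of $\gamma$ is contained in $Y$, so in particular $x\in Y$. As $x\in Conv(X)$ was arbitrary, we conclude $Conv(X)\subseteq Y$.

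The step I expect to demand the most care is this reverse inclusion, and the main point is to \emph{resist} the temptation to argue via lengths. A deformation retraction need not be Lipschitz, so $r\circ\gamma$ need not even be rectifiable, and the minimal-length characterization of $\gamma$ is simply not available for the pushed-forward loop. The crucial observation that saves the argument is that Theorem \ref{loop_unique} is a purely topological statement — uniqueness of the cyclically reduced representative, together with the fact that the reducing homotopy keeps the image inside the original curve — so it applies to $r\circ\gamma$ regardless of rectifiability and forces the entire image of $\gamma$ into $Y$. Combining the two inclusions gives $X_\circ = Conv(X)$, as desired.
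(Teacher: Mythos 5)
Your proposal is correct and follows essentially the same route as the paper: the containment $Conv(X)\subseteq Y$ for any deformation retract $Y$ is obtained exactly as in the paper's proof, by pushing a geodesic loop $\gamma$ along the retraction and invoking the uniqueness and image-containment clauses of Theorem \ref{loop_unique} to conclude $\gamma\subseteq \rho_1(\gamma)\subseteq Y$, while the other inclusion is Corollary \ref{def-retract} combined with the minimality hypothesis. Your remark that the argument must be purely topological (since $r\circ\gamma$ need not be rectifiable) is a correct and worthwhile clarification of why the paper's one-line citation of Theorem \ref{loop_unique} suffices.
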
 

\begin{proof}
If a subset $X_\circ$ satisfying properties (i) and (ii) exists, it must be unique. The fact that $Conv(X)$ satisfies (i) is just Corollary \ref{def-retract} above. Now assume that $X$ deformation retracts to $Y$, and let us argue that $Conv(X)\subseteq Y$. Let $\gamma \subset X$ be an arbitrary geodesic loop. Under the deformation retraction $\rho_t:X \rightarrow Y$, the geodesic loop $\gamma$ maps to a loop $\rho_1(\gamma)$ which is freely homotopic to $\gamma$ (via the homotopy $\rho_t$). By Theorem \ref{loop_unique}, we have the containments of sets $\gamma \subseteq \rho_1(\gamma) \subseteq Y$, which gives us $\gamma \subset Y$. We conclude $Conv(X)\subseteq Y$, showing $Conv(X)$ satisfies (ii).
\end{proof}

%
\subsection{Structure theory: the $\pi_1$-hull}

In the previous subsection, we reduced the study of general compact $1$-dimensional geodesic spaces to the study of their $\pi_1$-hull. In this subsection, we focus on understanding the structure of the $\pi_1$-hull $Conv(X)$. Our analysis starts with the notion of branch point.

\begin{Def}\label{branching} Let $X$ be a compact $1$-dimensional geodesic space, $p$ a point in $X$.  We say that $X$ has {\it branching} at $p$ provided there exists a triple of geodesic paths $\gamma_i:[0,\epsilon]\longrightarrow X$ with the following properties:
\begin{itemize}
	\item $\gamma_i(0)=p$ for all three paths,
	\item each concatenated path $\gamma_i*\gamma_j^{-1}$ is a reduced (and hence geodesic) path.
\end{itemize}
In other words, there are at least three distinct germs of geodesic paths originating at the point $p$. If $X$ has branching at $p$, we call $p$ a {\it branch point} of $X$.
\end{Def}

Away from the set of branch points, the local topology of $X$ is fairly simple, as indicated in the following Proposition.

\begin{Prop}\label{no-branch}
Let $X$ be a compact $1$-dimensional geodesic space, and $\mathcal B(X) \subset X$ the subset of all branch points of $X$. Assume that the point $p$ does not lie in the closure $\overline{\mathcal B(X)}$ of the set of branch points (i.e. $p\in X \setminus \overline{\mathcal B(X)}$). Then for $\epsilon$ small enough, the metric $\epsilon$-neighborhood of $p$ is isometric to either:
\begin{enumerate}
\item the half-open interval $[0, \epsilon)$, with the point $p$ corresponding to $0$, or
\item an open interval $(-\epsilon, \epsilon)$, with the point $p$ corresponding to $0$.
\end{enumerate}
\end{Prop}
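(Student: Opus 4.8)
The plan is to translate the branching condition into a statement about germs of geodesics, and then to show that a small enough ball around $p$ is an interval because it contains neither branch points nor short geodesic loops. First observe that, by Lemma~\ref{concatenation}, two geodesic germs $\gamma_i,\gamma_j$ issuing from a common point have $\gamma_i*\gamma_j^{-1}$ reducible exactly when they share a nontrivial initial segment; hence ``sharing an initial segment'' is an equivalence relation on germs, and Definition~\ref{branching} says precisely that $p$ is a branch point iff there are at least three germ-classes at $p$. Since $X$ is a $1$-dimensional geodesic space it has more than one point and $p$ is not isolated, so a distance minimizer from $p$ to a nearby point furnishes at least one germ; as $p\notin\overline{\mathcal B(X)}$ is not a branch point, there are therefore exactly one or two germ-classes at $p$, and these two cases will produce conclusions (1) and (2) respectively.

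Because $\overline{\mathcal B(X)}$ is closed, I would fix $r_0>0$ with $B(p,r_0)$ free of branch points. The main obstacle is that, \emph{a priori}, an arbitrarily small ball could still contain an entire geodesic loop (think of a point on a circle), which would wreck the interval picture; so the decisive step is to show that, shrinking $r_0$ if necessary, no nonconstant geodesic loop lies in $\overline{B(p,r_0)}$. I would argue by contradiction: if loops $\gamma_n\subseteq\overline{B(p,1/n)}$ existed, their diameters would tend to $0$, so (as $\operatorname{diam}X>0$) each $\gamma_n$ would be a proper subset of $X$. Choosing $z\notin\gamma_n$ and a nearest point $w_n\in\gamma_n$ to $z$, a distance minimizer $\sigma$ from $z$ to $w_n$ meets $\gamma_n$ only at $w_n$; the two germs of the cyclically reduced loop $\gamma_n$ at $w_n$ (distinct germ-classes, since a backtrack would violate cyclic reducedness) together with the germ of $\sigma$ (a third class, as otherwise $\sigma$ would run along $\gamma_n$, contradicting the nearest-point choice) exhibit $w_n$ as a branch point. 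Since $w_n\to p$, this would force $p\in\overline{\mathcal B(X)}$, a contradiction.

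Granting a clean radius $r_0$, I would then fix $\epsilon>0$ smaller than $r_0/4$ and smaller than the lengths of the geodesic germ-representatives at $p$, and study minimizers emanating from $p$. Distance minimizers from $p$ are unique: two minimizers $a,b$ from $p$ to $q\in B(p,\epsilon)$ lie in $\overline{B(p,\epsilon)}$, so if they were distinct the cyclic reduction of $a*b^{-1}$ would be a nonconstant geodesic loop with image inside $a\cup b\subseteq\overline{B(p,r_0)}$ (Theorem~\ref{loop_unique}), contradicting the previous step, while if $a*b^{-1}$ is nullhomotopic, uniqueness of reduced paths (Proposition~\ref{path_unique}) gives $a=b$. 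Moreover minimizers from $p$ in a common germ-class are nested: if two of them first diverged at an interior point $x\in B(p,\epsilon)$, then $x$ would carry three distinct germ-classes (the one back toward $p$ and the two divergent directions), making $x$ a branch point, which is impossible. Consequently each germ-class $k$ determines a single injective unit-speed ray $\rho_k\colon[0,\epsilon)\to B(p,\epsilon)$ along which arclength equals distance to $p$; comparing with a geodesic representative of the germ shows $\rho_k$ is defined on all of $[0,\epsilon)$ — this is exactly where $\epsilon$ small is used, to prevent the ray from terminating early at a nearby endpoint.

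Finally I would assemble the isometry. Every $q\in B(p,\epsilon)$ lies on its minimizer from $p$, whose initial germ lies in one of the (at most two) germ-classes, so $q=\rho_k(d(p,q))$ lies on a ray; conversely each ray lies in $B(p,\epsilon)$, and the two rays meet only at $p$ by uniqueness of minimizers. A short computation, using uniqueness of reduced arcs inside the loop-free region $B(p,r_0)$, gives $d(\rho_k(s),\rho_k(s'))=|s-s'|$ and $d(\rho_1(s),\rho_2(s'))=s+s'$, which is precisely the metric of $(-\epsilon,\epsilon)$ when there are two germ-classes (sending $\rho_1,\rho_2$ to $[0,\epsilon)$ and $(-\epsilon,0]$) or of $[0,\epsilon)$ when there is one germ-class (with $p\mapsto0$), yielding conclusions (2) and (1). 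The only genuinely delicate point is the exclusion of small loops; everything after that is careful but routine bookkeeping of minimizers, with $\epsilon$ chosen small enough that all arcs being compared remain inside the branch-point-free, loop-free ball $B(p,r_0)$.
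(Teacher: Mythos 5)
Your argument is correct and is essentially the proof in the paper: both hinge on first excluding nonconstant geodesic loops from a small ball (since such a loop would carry a branch point arbitrarily close to $p$), then bounding the number of directions at $p$ by two, and finally showing every nearby point lies on one of at most two minimizing rays because a divergence point would itself be a branch point. The only cosmetic difference is that you count germ-classes at $p$ directly from Definition~\ref{branching} (via Lemma~\ref{concatenation}), whereas the paper counts connected components of $B_p(\delta)\setminus\{p\}$ inside the resulting $\mathbb{R}$-tree; the substance is identical.
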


\begin{proof} First, we claim that for $\delta$ small enough, the open $\delta $-ball $B_p(\delta)$ centered at $p$ is isometric to an $\mathbb R$-tree. This will follow from the fact that the $\delta $-ball contains no geodesic loops (see the end of the proof of Proposition \ref{r-tree}). Indeed, since $p$ does not lie in the closure $\overline {\mathcal B(X)}$, by choosing $\delta $ small, we can ensure that $B_p(\delta)$ contains no branch points, and that the complement $X\setminus B_p(\delta)$ is non-empty. If $\gamma$ is a geodesic loop contained in $B_p(\delta)$, take a point $x$ outside the ball, and let $\eta$ be a distance minimizer from $x$ to the curve $\gamma$. Let $L$ be the length of $\gamma$, and choose the parametrization $\gamma: [-L/2, L/2]\rightarrow X$ so that $\gamma(0)$ denotes the endpoint of $\eta$ on $\gamma$. Then the point $\gamma(0)\in B_p(\delta)$ is a branch point: the three geodesics $\gamma|_{[0, L/2]}$, $(\gamma|_{[-L/2, 0]})^{-1}$, and $\eta^{-1}$ satisfy the conditions of Definition \ref{branching}. This is a contradiction, hence $B_p(\delta)$ contains no geodesic loops, and so must be isometric to an $\mathbb R$-tree.

Now consider the connected components of $B_p(\delta) \setminus \{p\}$. If there were $\geq 3$ such connected components, take points $x_1, x_2, x_3$ in three distinct connected components, and let $\eta_i$ be a distance minimizer from $p$ to $x_i$. It is immediate that the $\eta_i$ satisfy the conditions of Definition \ref{branching}, showing that $p$ is a branch point, a contradiction. So we have that there are either one or two connected components in $B_p(\delta) \setminus \{p\}$. We consider each of these cases separately.

\vskip 10pt

\noindent{\bf Case 1:} If there is only one connected component, then $p$ must be a vertex of the $\mathbb R$-tree $B_p(\delta)$. Choose $\gamma :[0, \epsilon]\rightarrow X$ a distance minimizer from $p$ to some point in $B_p(\delta)\setminus \{p\}$. Given any point $q\in B_p(\delta)\setminus \{p\}$, we can consider the distance minimizer $\eta_q:[0, \delta^{\prime}]\rightarrow B_p(\delta)$ from $p$ to $q$. Since $p$ is a vertex of the $\mathbb R$-tree $B_p(\delta)$, we have that $\eta_q$ and $\gamma$ must coincide on some neighborhood of $p$, i.e. there exists a corresponding real number $0<\delta_q \leq \min(\epsilon , \delta^{\prime})$, with the property that $\gamma \equiv \eta_q$ on the interval $[0, \delta _q]$. If $\delta_q$ were {\it strictly} smaller than $\min(\epsilon , \delta^{\prime})$, then the point $\gamma(\delta_q)$ would be a branch point: the three geodesics $(\gamma |_{[0, \delta_q]})^{-1}$, $\gamma |_{[\delta_q, \epsilon]}$, and $\eta |_{[\delta_q, \delta^{\prime}]}$ satisfy the conditions of Definition \ref{branching}. But the neighborhood $B_p(\delta)$ was chosen to contain no branch points, forcing $\delta_q = \min(\epsilon , \delta^{\prime})$. So if $q$ is any point at distance $<\epsilon $ from $p$, then $\delta_q = \min(\epsilon , \delta^{\prime}) = \delta^{\prime}$, and we have that $\eta_q\equiv \gamma|_{[0, \delta^{\prime}]}$, i.e. the point $q$ lies on $\gamma$. This immediately implies that the metric $\epsilon$-neighborhood of $p$ consists precisely of the points along the distance minimizer $\gamma$, giving the first statement in the Proposition.

\vskip 10pt

\noindent{\bf Case 2:} If there are two connected components, choose $\gamma_1, \gamma_2 :[0, \epsilon]\rightarrow X$ to be a pair of distance minimizers from $p$ to points in the two distinct components of $B_p(\delta)\setminus \{p\}$. The concatenation $\gamma:= \gamma_1* \gamma_2^{-1}$ is a distance minimizer of length $2\epsilon$, passing through the point $p$, and entirely contained in $B_p(\delta)$. If $q$ is any point at distance $<\epsilon$ from $p$, the distance minimizer $\eta_q:[0, \delta^\prime]\rightarrow X$ must coincide with either $\gamma_1 |_{[0, \delta^\prime]}$, or with $\gamma_1 |_{[0, \delta^\prime]}$ (otherwise, as in Case 1, the first point from which they start differing would give a branch point in $B_p(\delta)$, a contradiction). We conclude that the metric $\epsilon$-neighborhood of $p$ consists precisely of the points along the distance minimizer $\gamma$, giving the second statement in the Proposition.
\end{proof}

\begin{Cor}\label{circle}
Let $X$ be a compact $1$-dimensional geodesic space, and $\mathcal B(X) \subset X$ the subset of all branch points of $X$.  
If $X$ is $\pi_1$-convex, and $\mathcal B(X)=\emptyset$, then $X$ is isometric to the circle $S^1$ of some radius $r>0$.
\end{Cor}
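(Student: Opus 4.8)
The plan is to first use the hypothesis $\mathcal B(X)=\emptyset$ to put a manifold structure on $X$, and then to identify the global metric. Since $\mathcal B(X)=\emptyset$ we have $\overline{\mathcal B(X)}=\emptyset$ as well, so every point of $X$ lies in $X\setminus\overline{\mathcal B(X)}$ and Proposition \ref{no-branch} applies at \emph{every} point. Thus each $p\in X$ has a neighborhood isometric either to a half-open interval $[0,\epsilon)$ (call such a $p$ an \emph{endpoint}) or to an open interval $(-\epsilon,\epsilon)$ (call $p$ \emph{interior}).

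First I would rule out endpoints, using $\pi_1$-convexity. Suppose $p$ were an endpoint. Since $X=Conv(X)$, the point $p$ lies on the image of some geodesic loop $\gamma$, which by Corollary \ref{semunique} is cyclically reduced, and in particular reduced on each of its sub-arcs. Restricting $\gamma$ to a short parameter interval about a time $t_0$ with $\gamma(t_0)=p$ gives a reduced path whose image lies in the endpoint-neighborhood of $p$; but that neighborhood is isometric to $[0,\epsilon)$, an $\mathbb R$-tree in which every reduced path is monotone and hence cannot reach the endpoint $0$ at the interior time $t_0$ and then leave. This forces the sub-arc of $\gamma$ through $p$ to backtrack at $p$, contradicting reducedness of $\gamma$ (if $p$ is the basepoint of $\gamma$, one invokes cyclic reducedness instead). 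Hence $X$ has no endpoints, and every point is interior.

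Consequently $X$ is a topological $1$-manifold without boundary. It is compact by hypothesis, and connected, since $X=Conv(X)$ is nonempty and path-connected by Corollary \ref{cor_convex}; therefore $X$ is homeomorphic to $S^1$. Covering $X$ by finitely many of the interval-charts above shows that its total length $L$ is finite, and $L>0$ since $X$ is not a single point. It then remains to identify the metric: I would parametrize $X$ by arc length as a locally isometric homeomorphism $c:\mathbb R/L\mathbb Z\to X$, which exists because the interval-charts are isometries that glue to a local isometry off a compact $1$-manifold. For any two points $c(s),c(t)$, the two arcs of $X$ joining them have lengths $a$ and $L-a$, where $a=|s-t|\bmod L$; since $(X,d)$ is a length space, $d(c(s),c(t))=\min(a,L-a)$. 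This is exactly the intrinsic metric of the circle of circumference $L$, i.e.\ of radius $r=L/2\pi$, so $c$ is the desired isometry.

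The main obstacle is the step ruling out endpoints. One must be careful that, although geodesics in these singular spaces need not be locally length-minimizing (as the WARNING emphasizes), the local model supplied by Proposition \ref{no-branch} is \emph{literally} an interval, so reducedness of $\gamma$ forces monotonicity there and prevents a geodesic loop from reaching a leaf. Once every point is interior, the passage to a circle and the identification of the metric are routine.
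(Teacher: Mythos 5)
Your proof is correct and follows the same overall skeleton as the paper's: apply Proposition \ref{no-branch} at every point (legitimate since $\overline{\mathcal B(X)}=\emptyset$), rule out the half-open-interval local model, conclude that $X$ is a compact connected $1$-manifold without boundary and hence homeomorphic to $S^1$, and identify the metric by its total length. The one step where you genuinely diverge is the elimination of ``endpoints.'' The paper invokes Corollary \ref{characterization-conv-hull}: a half-open-interval chart would allow a deformation retraction of $X$ onto a proper subset, contradicting the minimality of $Conv(X)=X$ among deformation retracts. You instead argue directly from the definition of the $\pi_1$-hull: $p$ lies on a non-constant, cyclically reduced geodesic loop, whose restriction to a short parameter interval about a time mapping to $p$ is a reduced, arclength-parametrized path inside a chart isometric to $[0,\epsilon)$; reduced paths in an interval are monotone in the interval coordinate, so such a path cannot pass through the leaf $0$ at an interior parameter, and cyclic reducedness covers the case where $p$ is the basepoint. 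Your route is more elementary in that it uses only Corollary \ref{semunique} and Definition \ref{hull}, bypassing the deformation-retraction machinery of Corollaries \ref{def-retract} and \ref{characterization-conv-hull}, at the price of the small local argument about monotonicity of reduced paths in an interval; both are sound. Your closing arc-length parametrization is just a fleshed-out version of the paper's one-line remark that geodesic metrics on $S^1$ are determined up to isometry by their diameter.
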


\begin{proof}
From Proposition \ref{no-branch}, we know that each point $p \in X$ has a (metric) neighborhood $B_\epsilon(p)$ isometric to either (i) a half-open interval, or (ii) an open interval. Since $X$ is assumed to be $\pi_1$-convex, we can rule out (i), for otherwise we could deformation retract $X$ onto a proper subset of itself, contradicting Corollary \ref{characterization-conv-hull}. Since $X$ is a compact geodesic space, it is second countable and Hausdorff. Hence it is a compact connected $1$-dimensional manifold, so must be homeomorphic to $S^1$. Finally, it is easy to see that geodesic metric space structures on $S^1$ are completely determined (up to isometry) by their diameter.
\end{proof}

With this result in hand, we can now study the complement of the set of branch points in $Conv(X)$. 

\begin{Lem}\label{paths-comp-branch}
Let $X$ be a compact $1$-dimensional geodesic space, and $\mathcal B(X) \subset X$ the subset of all branch points of $X$.  Assume $\gamma: [0, L]\rightarrow X\setminus \mathcal B(X)$ is a geodesic path. Then $\gamma$ is locally a distance minimizer, and $\gamma\big( (0,L) \big) \subset X \setminus \overline{B(X)}$. 
\end{Lem}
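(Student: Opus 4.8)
The statement asserts two things about a geodesic path $\gamma:[0,L]\to X\setminus\mathcal{B}(X)$: first, that $\gamma$ is \emph{locally} a distance minimizer, and second, that its interior avoids not just the branch points but the entire \emph{closure} $\overline{\mathcal{B}(X)}$. The plan is to exploit Proposition \ref{no-branch}, which gives a very rigid local model (a half-open or open interval) at every point of $X\setminus\overline{\mathcal{B}(X)}$. The subtlety is that $\gamma$ is only assumed to avoid $\mathcal{B}(X)$ itself; interior points of $\gamma$ could a priori lie in $\overline{\mathcal{B}(X)}\setminus\mathcal{B}(X)$, where we have no local model. So the second conclusion is not automatic and must be bootstrapped from the geodesic property.

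For the local-minimizing claim, I would fix an interior point $t_0\in(0,L)$ with $p=\gamma(t_0)\notin\mathcal{B}(X)$ and examine a small neighborhood. Since $p$ is not a branch point, there cannot be three distinct germs of geodesics issuing from $p$; the two incoming/outgoing germs of $\gamma$ at $p$ together with any putative third geodesic direction would violate Definition \ref{branching}. The key local fact I want to extract is that near $p$, the path $\gamma$ runs along the \emph{unique} geodesic germ structure available, so that for $s,s'$ close to $t_0$ the restriction $\gamma|_{[s,s']}$ is the reduced path between its endpoints and hence (by Corollary \ref{I.24} and Corollary \ref{geodesic}) coincides with a distance minimizer. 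Concretely: in a small ball $B_p(\delta)$ containing no branch points, the argument at the end of Proposition \ref{r-tree}'s proof shows $B_p(\delta)$ is an $\mathbb{R}$-tree, and in an $\mathbb{R}$-tree the reduced path between two points \emph{is} the distance minimizer, so $\gamma$ is locally a distance minimizer on this neighborhood. Compactness of $[0,L]$ then upgrades ``locally near each point'' to a uniform statement if needed.

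For the containment $\gamma((0,L))\subset X\setminus\overline{\mathcal{B}(X)}$, I would argue by contradiction: suppose some interior point $p=\gamma(t_0)$ lies in $\overline{\mathcal{B}(X)}$. Then there is a sequence of genuine branch points $b_n\to p$ with $b_n\neq p$ (since $p\notin\mathcal{B}(X)$). The plan is to show that the branching at the nearby $b_n$ must ``leak onto'' $\gamma$: at each $b_n$ there are three distinct geodesic germs, and one wants to produce, in the limit, a third germ at $p$ transverse to $\gamma$ — contradicting $p\notin\mathcal{B}(X)$ via the same local-model/$\mathbb{R}$-tree reasoning as above. The cleanest route is probably to observe that $\gamma$ being locally a distance minimizer forces a genuine neighborhood of $p$ along $\gamma$ to be a distance-minimizing segment; branch points $b_n$ accumulating at $p$ must then either lie \emph{on} this segment (impossible, as such points are manifold-interior points with only two germs, by Proposition \ref{no-branch} applied to the segment's interior) or lie off it but arbitrarily close, in which case the distance minimizer from $b_n$ to $\gamma$ plus the two branching germs at $b_n$ can be assembled, via a limiting argument, into a third germ at $p$.

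\textbf{Main obstacle.} The hard part will be the second conclusion: passing from branch points $b_n$ accumulating near $p$ to an actual third geodesic germ \emph{at} $p$. Germs do not automatically converge, and the three germs at each $b_n$ could be shrinking or rotating; one must use compactness (of $X$, and via Arzel\`a--Ascoli for the geodesic segments, which are $1$-Lipschitz by arclength parametrization) to extract a convergent limiting geodesic and then verify that the limiting germ is genuinely distinct from the two germs of $\gamma$ at $p$ — i.e. that the concatenation with $\gamma$'s germs remains reduced in the limit. Establishing that reducedness survives the limit, rather than the third direction collapsing onto $\gamma$, is the delicate technical point, and it is exactly here that the local $\mathbb{R}$-tree structure of small balls around $p$ (no geodesic loops, hence unique reduced paths) provides the leverage to force the contradiction.
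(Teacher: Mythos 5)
There is a genuine gap, and it is structural: your two halves lean on each other circularly. Your argument that $\gamma$ is locally a distance minimizer requires a ball $B_p(\delta)$ around $p=\gamma(t_0)$ \emph{containing no branch points}, so that the no-loops/$\mathbb R$-tree reasoning applies and ``reduced'' upgrades to ``distance minimizing.'' Such a ball exists only if $p\notin\overline{\mathcal B(X)}$ --- which is precisely the second conclusion, and you correctly flag at the outset that the hypothesis only gives $p\notin\mathcal B(X)$. Meanwhile, your proof of the second conclusion explicitly invokes the first (``$\gamma$ being locally a distance minimizer forces a genuine neighborhood of $p$ along $\gamma$ to be a distance-minimizing segment''). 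Neither half can go first as written. On top of that, the route you propose for the second conclusion --- extracting a limiting third germ at $p$ from the germs at branch points $b_n\to p$ via Arzel\`a--Ascoli, and then verifying that reducedness survives the limit --- is exactly the step you concede is unresolved; germ convergence of this kind is not established anywhere in the paper and would be delicate to carry out.

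The idea you are missing, which is how the paper breaks the circle, is to show that for suitable $\epsilon>0$ the metric ball $B_{\gamma(t)}(\epsilon)$ is \emph{contained in the image of $\gamma$}. The mechanism is elementary and needs no limits: if $q$ is any point with $d(q,\gamma(t))<\epsilon$ not lying on the image of $\gamma$, a distance minimizer $\eta$ from $q$ to the compact set $\gamma([0,L])$ terminates (for $\epsilon$ chosen small relative to the two sups $\sup_{s\le t}d(\gamma(t),\gamma(s))$ and $\sup_{s\ge t}d(\gamma(t),\gamma(s))$) at an interior point of $\gamma$, and the germ of $\eta$ there together with the two germs of $\gamma$ makes that point a branch point \emph{on} $\gamma$ --- contradicting the hypothesis $\gamma\subset X\setminus\mathcal B(X)$ directly. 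A single nearby off-curve point already kills you; you never need the $b_n$ to converge to anything. A further refinement (using that $\gamma$ is reduced, so its two half-germs at $\gamma(t)$ cannot coincide) shows the ball is exactly the arc $\gamma((t-\epsilon,t+\epsilon))$, which yields both conclusions simultaneously: the arc is a metric ball, hence $\gamma$ is locally distance minimizing, and the ball is an open neighborhood of $\gamma(t)$ disjoint from $\mathcal B(X)$, hence $\gamma(t)\notin\overline{\mathcal B(X)}$. Your ``off the segment'' branch of the dichotomy is within arm's reach of this observation --- the distance minimizer from $b_n$ to $\gamma$ already lands on a branch point of $\gamma$ itself --- but as written the proposal does not close the argument.
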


\begin{proof}
Take any $t\in (0, L)$, and consider the numbers $\sup _{s\in [0, t]} d\big(\gamma(t), \gamma(s)\big)$ and $\sup _{s\in [t, L]} d\big(\gamma(t), \gamma(s)\big)$. As $\gamma$ is parametrized by arclength, both these numbers are $>0$, and we can 
choose $\epsilon$ so that $2\epsilon$ is smaller than both of these. Now consider the metric $\epsilon$-ball $B_{\gamma(t)}(\epsilon)$ centered at $\gamma(t)$. We first claim that, as a set, this metric ball is entirely contained in the image of $\gamma$. If not, we can find a point $p\in X$ which satisfies $d\big(p, \gamma(t)\big) < \epsilon$, and which does {\it not} lie on the image of $\gamma$. Let $\eta$ be a distance minimizer from $p$ to the image of $\gamma$ (a compact set). By the choice of $\epsilon$, $\eta$ terminates at a point on the image of $\gamma$ which is distinct from $\gamma(0), \gamma(L)$, so yields a branch point on the image of $\gamma$. This contradicts the fact that $\gamma$ lies in $X\setminus \mathcal B(X)$. 

Now that we know that the set $B_{\gamma(t)}(\epsilon)$ is contained in the image of $\gamma$, we proceed to show that  (possibly after shrinking $\epsilon$) it in fact coincides with $\gamma \big((t-\epsilon, t+\epsilon) \big)$. Indeed, take a point $x_1$ at distance $\epsilon$ from $\gamma(0)$, and let $\eta_1:[0, \epsilon] \rightarrow X$ be a distance minimizer from $\gamma(t)$ to $x_1$. Since $\gamma(t)$ is not a branching point, $\eta_1$ must initially coincide with one of the two geodesics $\gamma|_{[t, s]} $ and $(\gamma|_{[0, t]})^{-1}$. In fact, we must have either $\eta_1 \equiv \gamma|_{[t, t+\epsilon]}$ or $\eta_1 \equiv (\gamma|_{[t-\epsilon, t]})^{-1}$, for otherwise, the first point from which they start disagreeing would be a branch point on the curve $\gamma$, a contradiction. Without loss of generality, we may now assume that $\gamma|_{[t, t+\epsilon]}$ is a distance minimizer.

Next, note that the metric ball $B_{\gamma(t)}(\epsilon/2)$ {\it cannot} consist solely of the points on the curve $\gamma|_{[t, t+\epsilon/2]}$, for otherwise $(\gamma|_{[t-\epsilon/2, t]})^{-1}$ would have to coincide with $\gamma|_{[t, t+\epsilon/2]}$, contradicting the fact that $\gamma$ is reduced. Let $x_2$ be a point in $B_{\gamma(t)}(\epsilon/2)$ which does not lie on $\gamma|_{[t, t+\epsilon/2]}$, and let $\eta_2$ be a distance minimizer from $x_2$ to the (compact) set $\gamma\big({[t, t+\epsilon]}\big)$. If $\eta_2$ terminates on a point in $\gamma\big({(t, t+\epsilon)}\big)$, we would obtain a branch point on $\gamma$, a contradiction. The triangle inequality implies that $\eta_2$ cannot terminate at the point $\gamma( t+\epsilon)$.
Hence $\eta_2$ must terminate at $\gamma(t)$. This gives us a distance minimizer $\eta_2^{-1}$ which intersects the distance minimizer $\gamma|_{[t, t+\epsilon]}$ only at their common initial point $\gamma(t)$. 

Since $\gamma(t)$ is not branching, the geodesic $(\gamma|_{[0,t]})^{-1}$ must initially coincide with either $\eta_2^{-1}$ or with $\gamma|_{[t, t+\epsilon]}$. As $\gamma$ is reduced, we see that $(\gamma|_{[0,t]})^{-1}$ must coincide with $\eta_2^{-1}$. So at the cost of further shrinking $\epsilon$, we can assume that both $\gamma|_{[t-\epsilon, t]}$ and $\gamma|_{[t, t+\epsilon]}$ are distance minimizers, that only intersect at the point $\gamma(t)$. Finally, we can conclude that their union $\gamma\big( (t-\epsilon, t+\epsilon)\big)$ is exactly the metric ball $B_{\gamma(t)}(\epsilon)$. For if not, then taking a distance minimizer $\eta_3$ from a point $x_3 \in B_{\gamma(t)}(\epsilon) \setminus \gamma\big( (t-\epsilon, t+\epsilon)\big)$ to the closest point on $\gamma\big( (t-\epsilon, t+\epsilon)\big)$ would yield a branch point on $\gamma$.

So each point in $\gamma\big( (0,L) \big)$ has a metric neighborhood isometric to an open interval contained entirely in the set $\gamma\big( (0,L) \big)\subset X\setminus \mathcal B(X)$. As this is a neighborhood which is disjoint from $\mathcal B(X)$, we conclude that each of these points lies in the complement of $\overline{B(X)}$, completing the proof.
\end{proof}

\begin{Lem}\label{components-length-space}
Suppose $X$ is a $\pi_1$-convex, compact, 1-dimensional, geodesic metric space, and let $X\setminus \overline{\mathcal B(X)} = \coprod _{i\in I}W_i$ be the decomposition of $X\setminus \overline{\mathcal B(X)}$ into connected components. Then each $W_i$ is rectifiably connected, and $\overline {W_i} \cap W_j = \emptyset$ if $i\neq j$.
\end{Lem}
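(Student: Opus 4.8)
The plan is to reproduce the argument of Corollary \ref{rec-connected} almost verbatim, replacing $Conv(X)$ by the set $\overline{\mathcal B(X)}$. The two statements are formally identical: in each case we decompose the complement of a closed subset into its connected components, and must show that these components are rectifiably connected and have pairwise disjoint closures. Observe that $\pi_1$-convexity, the standing hypothesis of this subsection, plays no role in the argument.

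First I would note that $\overline{\mathcal B(X)}$ is closed, being a closure (this is even more immediate than in Corollary \ref{rec-connected}, where the closedness of $Conv(X)$ required the nontrivial Proposition \ref{closed}). Hence $X \setminus \overline{\mathcal B(X)}$ is open in $X$. Since $X$ is a geodesic space, open metric balls are rectifiably connected, so $X$ is locally rectifiably connected; this property is inherited by any open subset, and in particular by $X \setminus \overline{\mathcal B(X)}$. Lemma \ref{components} then applies: for a locally rectifiably connected space the decompositions into connected components, path components, and rectifiable components all coincide. Applied to $X \setminus \overline{\mathcal B(X)}$, this shows that each connected component $W_i$ is also a rectifiable component, hence rectifiably connected, which gives the first assertion.

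For the disjointness of closures, I would use that the connected components of a locally connected (hence locally path connected) space are themselves open. Thus each $W_j$ is open in $X$. If $i \neq j$ and $x \in W_j$, then $W_j$ is an open neighborhood of $x$ disjoint from $W_i$, so $x \notin \overline{W_i}$; as $x \in W_j$ was arbitrary, $\overline{W_i} \cap W_j = \emptyset$. The main point is that there is no genuine obstacle here: all of the substance is carried by Lemma \ref{components} and by the triviality that a closure is automatically closed. The only step requiring a line of care is the verification that local rectifiable connectedness descends to open subsets.
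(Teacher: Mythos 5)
Your proposal is correct and is exactly the paper's argument: the paper's proof of this lemma simply says it is identical to that of Corollary \ref{rec-connected}, which is the argument you reproduce (with the simplification, which you rightly note, that closedness of $\overline{\mathcal B(X)}$ is automatic rather than requiring Proposition \ref{closed}).
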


\begin{proof}
The proof is identical to that of Corollary \ref{rec-connected}.
\end{proof}

The fact that the $W_i$ are rectifiably connected tells us that they have a well-defined intrinsic length space structure.
 
\begin{Lem}\label{components-comp-branch}
Suppose $X$ is a $\pi_1$-convex compact 1-dimensional geodesic metric space, and assume $\mathcal B(X)\neq \emptyset$ (so $X$ is not homeomorphic to $S^1$). Then each connected component $W$ of $X\setminus \overline{\mathcal B(X)}$, equipped with the induced intrinsic geodesic metric, is isometric to an open interval of finite length.
\end{Lem}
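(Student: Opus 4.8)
The plan is to show that such a $W$ is a connected, boundaryless $1$-manifold that is topologically a line, to equip it with an arclength parametrization by an interval, to verify that this parametrization is a global isometry for the intrinsic metric, and finally to rule out infinite length by a compactness-plus-branching argument. I expect the last step to be the crux.

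First I would pin down the local structure. Every point $p\in W$ lies in $X\setminus\overline{\mathcal B(X)}$, so Proposition \ref{no-branch} provides a metric neighborhood of $p$ isometric either to a half-open interval $[0,\epsilon)$ or to an open interval $(-\epsilon,\epsilon)$. Since $W$ is open in $X$ (a connected component of the open set $X\setminus\overline{\mathcal B(X)}$, and $X$ is locally connected), such a neighborhood may be taken inside $W$. I would discard the half-open case using $\pi_1$-convexity: from the model $[0,\epsilon)$ there is a single germ of distance minimizer issuing from $p$, so any cyclically reduced loop through $p$ would be forced to backtrack there; thus $p$ lies on no geodesic loop and $p\notin Conv(X)=X$, a contradiction. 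Hence every point of $W$ has a neighborhood isometric to an open interval, so $W$ is a connected $1$-manifold without boundary; being second countable it is homeomorphic to $S^1$ or to $\mathbb R$. The circle case is impossible, for a copy of $S^1$ would be compact, hence closed in $X$, while also open, forcing $W=X$ by connectedness and thus $\overline{\mathcal B(X)}=\emptyset$, contrary to $\mathcal B(X)\neq\emptyset$. So $W$ is homeomorphic to an open interval.

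Next I would build the isometry. Gluing the interval charts above along the simply connected $1$-manifold $W$ (there is no monodromy obstruction) yields a unit-speed local isometry $\phi:(\alpha,\beta)\to W$ that is a homeomorphism onto $W$. To see $\phi$ is a global isometry for the intrinsic metric $d_W$ (which exists since $W$ is rectifiably connected by Lemma \ref{components-length-space}), note that for $s<t$ the arc $\phi|_{[s,t]}$ has $d_W$-length $t-s$, while any path in $W$ from $\phi(s)$ to $\phi(t)$, transported by $\phi^{-1}$ to the interval, has total variation at least $t-s$; since $\phi$ is a local isometry this bounds the path's length below by $t-s$. Hence $d_W(\phi(s),\phi(t))=|t-s|$, so $(W,d_W)$ is isometric to the open interval $(\alpha,\beta)$.

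It remains to prove $\beta-\alpha<\infty$, the main obstacle. Suppose instead that $\beta=\infty$. By compactness of $X$ choose $t_k\to\infty$ with $\phi(t_k)\to q\in X$; since $\phi$ is a homeomorphism we have $q\notin W$, and since the closures of distinct components are disjoint (Lemma \ref{components-length-space}), $q\in\overline{\mathcal B(X)}$. Fix a small $\delta>0$ and a large $k$ so that $\gamma:=\phi|_{[t_k-\delta,\,t_k+\delta]}$ is a distance minimizer (possible as $\phi$ is locally distance-minimizing by Lemma \ref{paths-comp-branch}), then pick $k'>k$ with $t_{k'}>t_k+\delta$ and $d_X\big(\phi(t_k),\phi(t_{k'})\big)<\epsilon$ for some $\epsilon<\delta/2$. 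The point $p:=\phi(t_{k'})$ does not lie on the image of $\gamma$, by injectivity of $\phi$, so a distance minimizer $\eta$ from $p$ to the compact set $\gamma\big([t_k-\delta,t_k+\delta]\big)$ has length $<\epsilon$ and, by the triangle inequality (the endpoints of $\gamma$ are at distance $\delta$ from $\phi(t_k)$, hence more than $\delta-\epsilon>\epsilon$ from $p$), terminates at an interior point $y$ of $\gamma$. Exactly as in the proof of Lemma \ref{paths-comp-branch}, the three geodesic germs at $y$ given by $\gamma$ issuing forward, $\gamma$ issuing backward, and $\eta^{-1}$ satisfy Definition \ref{branching} and exhibit $y$ as a branch point. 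But $y\in W\subseteq X\setminus\mathcal B(X)$, a contradiction. The identical argument at the other end rules out $\alpha=-\infty$, so $W$ has finite length, completing the proof.
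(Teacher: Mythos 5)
Your reduction of $W$ to a $1$-manifold isometrically parametrized by arclength over an interval $(\alpha,\beta)$ is sound and essentially parallels the paper (the paper rules out the half-open model via Corollary \ref{characterization-conv-hull} rather than via backtracking of cyclically reduced loops, but both work). The problem is in the finiteness step, which you rightly call the crux. You fix $\delta>0$ and $k$ so that $\gamma=\phi|_{[t_k-\delta,t_k+\delta]}$ is a distance minimizer, and then ask for $k'>k$ with $t_{k'}>t_k+\delta$ and $d_X\big(\phi(t_k),\phi(t_{k'})\big)<\epsilon<\delta/2$. But Lemma \ref{paths-comp-branch} only provides a distance-minimizing radius $\delta=\delta_k$ \emph{depending on} $k$, with no uniformity, and since $\phi(t_{k'})\to q$, the existence of such a $k'$ essentially forces $d_X\big(\phi(t_k),q\big)<\delta_k/2$. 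Nothing you have established guarantees this for even a single $k$. Indeed, the proof of Lemma \ref{paths-comp-branch} shows that the metric ball $B_{\phi(t_k)}(\epsilon)$ coincides with the arc $\phi\big((t_k-\epsilon,t_k+\epsilon)\big)$ for small $\epsilon$, and since $q\notin W$ that radius can be at most $d_X\big(\phi(t_k),q\big)$; so the relevant local scale degenerates at least as fast as your points approach $q$, and the inequality you need is exactly the kind of claim that requires an argument. As written, the contradiction is never reached.

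The gap is repairable, and your branch-point idea can be salvaged with the quantifiers rearranged: take any $q\in\overline W\setminus W$ and a distance minimizer $\eta$ from $q$ to a fixed $\phi(t_0)$. Since small metric balls at points of $W$ are arcs of $\phi$, the reversed path $\eta^{-1}$ must leave $\phi(t_0)$ along $\phi$ (forwards or backwards); if $\beta=\infty$ it cannot follow $\phi$ for its whole (finite) length, since it terminates at $q\notin W$, so it diverges from $\phi$ at some point $\phi(t_0\pm u)\in W$, where the three germs ($\phi$ forward, $\phi$ backward, and the remainder of $\eta^{-1}$) exhibit a branch point inside $W$ --- the desired contradiction. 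The paper's own argument is shorter still: any rectifiable path from $\phi(t_0)$ to a point of $\overline{\mathcal B(X)}$ must exit $W$, hence its $\phi^{-1}$-image sweeps out an entire half-line of $(\alpha,\beta)$ and the path has infinite length when $\beta=\infty$, contradicting the fact that $X$ is a geodesic space. Either repair works; the version you submitted does not.
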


\begin{proof}
The argument from Corollary \ref{circle} applies verbatim to give that $W$ is a connected $1$-dimensional manifold, so is either homeomorphic to $S^1$ or to an open interval. We can rule out $S^1$, for otherwise $W$ would be both closed (being a compact subset of the Hausdorff space $X$) and open (being a connected component of the open set $X\setminus \overline{\mathcal B(X)}$) proper subset of $X$ (since by hypothesis ${\mathcal B(X)}\neq \emptyset$). But this would violate the fact that $X$ is connected. 

Next, note that Lemma \ref{components-length-space} tells us that each $W$ inherits a well-defined intrinsic length space structure. Since the $W$ is homeomorphic to an open interval, this is actually a geodesic metric space: given any two points, there is a unique (up to reparametrization) embedded path joining them, which must be rectifiable (by Lemma \ref{components-length-space}) and is of minimal length amongst all rectifiable paths joining the two points.
To conclude, we merely observe that a geodesic metric space structure on an open interval is completely determined (up to isometry) by the diameter of the interval. In the case of $W$, this diameter must be finite, for if $W$ were isometric to $\mathbb R$, then there would be no rectifiable path joining the point corresponding to $0\in \mathbb R$ with a point in $\overline{\mathcal B(X)}$, contradicting the fact that $X$ is a geodesic space.

\end{proof}

\begin{Prop}\label{complement-branch-points}
Suppose $X$ is a $\pi_1$-convex compact 1-dimensional geodesic metric space, and assume $\mathcal B(X)\neq \emptyset$ (so $X$ is not homeomorphic to $S^1$). Let $X\setminus \overline{\mathcal B(X)} = \coprod _{i\in J}W_i$ be the decomposition of $X\setminus \overline{\mathcal B(X)}$ into connected components. Then we have that each $\overline{W_i} \cap \overline{\mathcal B(X)}$ consists of at most two points. Moreover, the index set $J$ is countable, and $\displaystyle \lim_{i\to \infty} diam (W_i) = 0$.
\end{Prop}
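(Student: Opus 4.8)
The plan is to follow the template of Proposition~\ref{attach}, with the finite-length open intervals $W_i$ of Lemma~\ref{components-comp-branch} playing the role that the $\mathbb{R}$-trees $Z_i$ played there. By that lemma, each $W_i$, with its induced intrinsic metric $d_{W_i}$, is isometric to an open interval $(0,L_i)$ of finite length $L_i$, so its intrinsic completion is the closed interval $[0,L_i]$. Since $X$ is a length space, $d\le d_{W_i}$, so the inclusion $(W_i,d_{W_i})\hookrightarrow (X,d)$ is $1$-Lipschitz, hence uniformly continuous, and (as $X$ is complete) extends to a continuous map $\rho:[0,L_i]\to X$. Its image is compact, contains $W_i$, and consists of limits of points of $W_i$, so $\rho([0,L_i])=\overline{W_i}$; moreover $\rho$ restricts to the isometry $(0,L_i)\cong W_i$, so $\rho\big((0,L_i)\big)=W_i$. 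Hence $\overline{W_i}\setminus W_i\subseteq \rho(\{0,L_i\})$ has at most two points. Because $W_i\cap\overline{\mathcal B(X)}=\emptyset$ while, by Lemma~\ref{components-length-space}, $\overline{W_i}\cap W_j=\emptyset$ for $j\neq i$, we get $\overline{W_i}\cap\overline{\mathcal B(X)}=\overline{W_i}\setminus W_i$; this proves the first assertion. (This set is nonempty, since otherwise $W_i$ would be a proper nonempty clopen subset of the connected space $X$.)

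The heart of the remaining two statements is the metric estimate that, writing $w=\rho(s)\in W_i$,
$$d\big(w,\overline{\mathcal B(X)}\big)\ \ge\ \min(s,\,L_i-s).$$
As $\overline{\mathcal B(X)}$ is a nonempty compact set and $X$ is a compact geodesic space, $d(w,\overline{\mathcal B(X)})$ is realized by a distance minimizer $\eta:[0,T]\to X$. Let $t_0$ be the first time $\eta$ meets $\overline{\mathcal B(X)}$; on $[0,t_0)$ the curve lies in the open set $X\setminus\overline{\mathcal B(X)}=\coprod_j W_j$, and being connected and starting in $W_i$ it stays in $W_i$, so $\eta(t_0)\in\overline{W_i}\cap\overline{\mathcal B(X)}$. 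Minimality forces $t_0=T$, so $\eta$ remains in $\overline{W_i}$ and terminates at one of the (at most two) endpoints of the interval. By Proposition~\ref{no-branch}, every point of $W_i$ has an ambient neighborhood isometric to an interval, so the ambient and intrinsic length structures agree along $W_i$; hence the length of $\eta$, which equals $d(w,\overline{\mathcal B(X)})$, is at least the intrinsic distance from $w$ to the endpoint it reaches, namely $s$ or $L_i-s$. In particular the midpoint $m_i:=\rho(L_i/2)$ satisfies $d\big(m_i,\overline{\mathcal B(X)}\big)\ge L_i/2$.

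With this in hand the conclusion follows exactly as in Proposition~\ref{attach}. For $n\in\mathbb{N}$ put $J_n=\{\,i\in J: L_i\ge 1/n\,\}$. If some $J_n$ were infinite, the corresponding midpoints $m_i$ would satisfy $d(m_i,\overline{\mathcal B(X)})\ge 1/(2n)$; any subsequential limit $x$ (which exists by compactness) would then satisfy $d(x,\overline{\mathcal B(X)})\ge 1/(2n)>0$, placing $x$ in some open component $W_\ell$, which would therefore contain infinitely many of the $m_i$ --- impossible, as distinct $m_i$ lie in distinct components. Thus each $J_n$ is finite, $J=\bigcup_n J_n$ is countable, and $L_i\to 0$; since $diam(W_i)\le L_i$, also $diam(W_i)\to 0$.

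The step I expect to be the main obstacle is the metric estimate: one must check that a distance minimizer from an interior point of $W_i$ to $\overline{\mathcal B(X)}$ cannot shortcut out of $\overline{W_i}$ but must exit through an endpoint, and then that its ambient length genuinely records the intrinsic distance travelled --- the latter relying on the local isometry of Proposition~\ref{no-branch}. The topological bookkeeping in the first paragraph and the compactness argument in the last are then routine transcriptions of the $Z_i$ analysis.
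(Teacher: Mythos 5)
Your proof is correct and follows essentially the same route as the paper: the first claim via the metric completion of the finite-length interval $W_i$ and the induced surjection onto $\overline{W_i}$, and the countability/diameter claim via the compactness argument from the last paragraph of Proposition \ref{attach}. You simply make explicit the metric estimate (interior points of $W_i$ are far from $\overline{\mathcal B(X)}$) that the paper leaves implicit when it says that argument applies ``almost verbatim.''
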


\begin{proof}
The metric completion $W_i^\prime$ of an open interval of finite length is a closed interval of the same length. There is a natural surjective map $W_i^\prime \rightarrow \overline{W_i}$ extending the identity map on $W_i$. Since $W_i^\prime\setminus W_i$ consists of two points, and maps to $\overline{W_i} \cap \overline{\mathcal B(X)}$, the first claim follows. The argument in the last paragraph of the proof of Proposition \ref{attach} applies almost verbatim to give the
statement concerning the cardinality of the indexing set and the limit of the diameters.
\end{proof}

\begin{Lem} \label{extending-W}
Suppose $X$ is a $\pi_1$-convex compact 1-dimensional geodesic metric space, and assume $\mathcal B(X)\neq \emptyset$ (so $X$ is not homeomorphic to $S^1$). Let $W$ be a component of $X\setminus \overline{\mathcal B(X)}$ which is attached along one of its endpoints to a point $p\in \overline{\mathcal B(X)} \setminus \mathcal B(X)$. Let $\gamma: [0, r]\rightarrow X$ coincide with $\overline W$, parametrized by arclength, and satisfying $\gamma(0)=p$. Assume $\hat \gamma: [-\epsilon, r]\rightarrow X$ is any geodesic path extending the geodesic $\gamma$. Then for small enough $\epsilon^\prime$, the curve $\hat \gamma|_{[-\epsilon^\prime, 0]}$ is a distance minimizer. Moreover, there exists a strictly increasing sequence $\{t_i\}_{i \in \mathbb N} \subset [-\epsilon^\prime, 0)$ with the property that $\lim t_i =0$, and each point $\hat \gamma (t_i)$ is a branch point. 
\end{Lem}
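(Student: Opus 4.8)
The plan is to exploit the fact that, since $p\notin\mathcal B(X)$, there are at most two germs of geodesic paths issuing from $p$ (Definition \ref{branching}). One of these, call it $g_1$, is the germ of $\gamma$ pointing into $W$; since $\hat\gamma$ is reduced, its leftward germ $g_2$ is distinct from $g_1$, and these are therefore \emph{all} the germs at $p$. Every distance minimizer terminating at $p$ must arrive along $g_1$ or $g_2$. The heart of the argument is to rule out arrival along $g_1$, after which sub-path minimality delivers both conclusions.

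First I would show that for $\epsilon'$ small (in particular $\epsilon'<r$, where $r=l(\overline W)$) any distance minimizer $\sigma$ from $q:=\hat\gamma(-\epsilon')$ to $p$ arrives along $g_2$. Suppose instead it arrives along $g_1$. Then the terminal portion of $\sigma$ coincides with $\gamma$ and hence enters $W$. As $q\notin\overline W$ for small $\epsilon'$ (since $q$ lies along the germ $g_2\neq g_1$), the path $\sigma$ must cross the topological boundary $\overline W\setminus W$, which by Proposition \ref{complement-branch-points} is contained in a set $\{p,p'\}$ of at most two attaching points. It cannot enter through $p$, since $\sigma$ is embedded and already terminates there; so it enters through a second point $p'$. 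But then $\sigma$ contains a sub-arc lying in $\overline W\cong[0,r]$ and joining the two endpoints $p,p'$, forcing $l(\sigma)\geq r$. This contradicts $l(\sigma)=d(p,q)\leq\epsilon'<r$. Hence $\sigma$ leaves $p$ along $g_2$, i.e. coincides with $\hat\gamma$ on an initial interval $[-\delta,0]$.

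Next I observe that a distance minimizer can only peel away from $\hat\gamma$ at a branch point: at any $z=\hat\gamma(t)$ with $z\notin\mathcal B(X)$ the path $\hat\gamma$ already occupies the (at most two) germs at $z$ with its incoming and outgoing directions, so a geodesic agreeing with $\hat\gamma$ up to $z$ is forced to continue along $\hat\gamma$ past $z$ (leaving by the same germ would violate reducedness). Let $[t_0,0]$ be the maximal interval on which $\sigma$ agrees with $\hat\gamma$. If $t_0=-\epsilon'$ then $\hat\gamma|_{[-\epsilon',0]}=\sigma$ is a distance minimizer. If $t_0>-\epsilon'$, then $\sigma$ peels off at the branch point $\hat\gamma(t_0)$, but the sub-arc of $\sigma$ from $p$ to $\hat\gamma(t_0)$, which equals $\hat\gamma|_{[t_0,0]}$, is still a distance minimizer. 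Either way there is an $\epsilon^*>0$ with $\hat\gamma|_{[-\epsilon^*,0]}$ distance minimizing, and since any sub-segment of a distance minimizer between two of its points is again distance minimizing, $\hat\gamma|_{[-\epsilon',0]}$ is a distance minimizer for every $\epsilon'\leq\epsilon^*$. I expect this to be the main obstacle: because branch points accumulate at $p$ there is no branch-free neighborhood on which to run a purely local argument, and the role of the sub-path trick is precisely to convert a possible peeling-off into the existence of a definite minimizing segment.

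For the second statement I would use $p\in\overline{\mathcal B(X)}$ to choose branch points $b_n\to p$, and for each take a distance minimizer $\sigma_n$ from $p$ to $b_n$. Exactly as above (using $d(p,b_n)<r$ and $b_n\notin\overline W$ for large $n$), $\sigma_n$ leaves $p$ along $g_2$ and agrees with $\hat\gamma$ until it either reaches $b_n$ while still on $\hat\gamma$ — so that $b_n=\hat\gamma(t_n)$ is itself a branch point of $\hat\gamma$ — or peels off at an earlier branch point $\hat\gamma(s_n)$, where the three germs (toward $p$ along $\hat\gamma$, continuing along $\hat\gamma$, and along $\sigma_n$ toward $b_n$) exhibit branching. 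In both cases one obtains a branch point of $\hat\gamma$ at a negative parameter of absolute value $\leq d(p,b_n)\to 0$, so these parameters accumulate at $0$ from below. Since there are then infinitely many distinct such parameters in $[-\epsilon',0)$, one extracts a strictly increasing sequence $t_i\to 0$ with each $\hat\gamma(t_i)$ a branch point, completing the proof.
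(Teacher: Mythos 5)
Your argument is correct and follows essentially the same route as the paper's: since $p$ is not a branch point it carries only the two germs of $\hat\gamma$, the germ into $W$ is excluded because a minimizer entering $W$ cannot peel off inside the branch-point-free arc and so would need length $\geq r$, minimizers can only diverge from $\hat\gamma$ at branch points, and minimizers from $p$ to nearby branch points produce the sequence $t_i\to 0^-$. The only real difference is cosmetic: you derive the distance-minimizing claim from a separate minimizer $\sigma$ from $\hat\gamma(-\epsilon')$ to $p$ --- where the assertion that $q\notin\overline W$ for suitable small $\epsilon'$ deserves the one-line justification that a reduced path terminating at $p$ cannot have its entire tail inside the arc $\overline W$ (it would have to run backwards along $\gamma$, contradicting reducedness of $\hat\gamma$ at $0$) --- whereas the paper reads the same conclusion off directly from the minimizer $\eta_1$ joining $p$ to the first nearby branch point.
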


\begin{proof}
Let $d$ denote the maximal distance from $p$ to a point on $\hat \gamma \big([-\epsilon, 0]\big)$, and choose $\epsilon_1$ so that $0<\epsilon_1 <d$. Since $p\in \overline{\mathcal B(X)} \setminus \mathcal B(X)$, there exists a branching point $x_1$ satisfying $0<d(p, x_1)<\epsilon_1$. Let $\eta_1$ be a distance minimizer from $p$ to $x_1$. Since $p$ is not a branch point, $\eta_1$ starts out coinciding with either $(\hat \gamma |_{[-\epsilon, 0]})^{-1}$ or with $\hat \gamma |_{[0, r]} \equiv \gamma$. If $\eta_1$ lies entirely along one of these curves, then $x_1$ lies on $\hat \gamma$, and we can let $t_1$ satisfy $\hat \gamma (t_1)=x_1$. Otherwise, there is a first occurrence after which the two curves are distinct. We can then set $t_1\in [-\epsilon, r]$ to be the parameter at which $\eta_1$ diverges from the $\hat \gamma$ curve. Then the point $\hat \gamma(t_i)$ is branching, by an argument identical to that in Proposition \ref{no-branch}. Note that, in both cases, $t_1$ must in fact satisfy $t_1<0$, as there are no branch points on $\gamma \equiv \hat \gamma |_{[0, r]}$. So in either case, we have obtained a $t_1<0$ with the property that $\hat \gamma (t_1)$ is branching. Note that setting $\epsilon^\prime := -t_1$, we have by construction that $\hat \gamma|_{[-\epsilon ^\prime, 0]}$ coincides with the distance minimizer $(\eta_1|_{[0, \epsilon^\prime]})^{-1}$.

By induction, assume that we have already chosen $t_1, \ldots ,t_{i-1}$, and set $\epsilon_i = -\frac{1}{2}t_{i-1}>0$. Let $x_i$ be a branch point satisfying $0 < d(p, x_i) < \epsilon_i$, and let $\eta_i$ be a distance minimizer from $p$ to $x_i$. Proceeding as in the last paragraph, we let $-t_i$ be the largest positive number so that $\eta_i|_{[0, -t_i]}\equiv (\hat \gamma |_{[t_i, 0]})^{-1}$. Then $\hat \gamma (t_i)$ is the desired branching point. This defines our sequence $\{t_i\}_{i\in \mathbb N}$. Moreover, our inductive step ensures that each $t_i$ satisfies $|t_i| < \frac{1}{2}|t_{i-1}|$, hence the increasing sequence limits to zero.
\end{proof}

\begin{Cor}\label{unique-comp-attached}
Suppose $X$ is a $\pi_1$-convex compact 1-dimensional geodesic metric space, and assume $\mathcal B(X)\neq \emptyset$ (so $X$ is not homeomorphic to $S^1$). Let $X\setminus \overline{\mathcal B(X)} = \coprod _{i\in I}W_i$ be the decomposition of $X\setminus \overline{\mathcal B(X)}$ into connected components, and denote by $\rho: \coprod _{i\in I} \partial W_i^\prime \rightarrow \overline{\mathcal B(X)}$ the attaching map from the metric completion of the $W_i$ to the set $\overline{\mathcal B(X)}$. Then for any point $x\in \overline{\mathcal B(X)} \setminus \mathcal B(X)$, we have that $|\rho^{-1}(x)| \leq 1$. 
\end{Cor}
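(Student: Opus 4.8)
The plan is to argue by contradiction, exploiting the fact that any geodesic extension of a component $W$ past a point $x \in \overline{\mathcal B(X)} \setminus \mathcal B(X)$ must immediately run into branch points accumulating at $x$ (Lemma \ref{extending-W}). Suppose $x \in \overline{\mathcal B(X)}\setminus \mathcal B(X)$ satisfies $|\rho^{-1}(x)| \geq 2$. By the definition of $\rho$, this means at least two endpoints of the completions $W_i^\prime$ are attached to $x$; each such endpoint yields a germ of a geodesic emanating from $x$ and immediately entering the corresponding open component $W_i \subset X \setminus \overline{\mathcal B(X)}$. Fix two such germs, traced by arclength-parametrized geodesics $\gamma, \gamma_2$ with $\gamma(0) = \gamma_2(0) = x$, where $\gamma$ parametrizes $\overline{W}$ for one attached component $W$, and $\gamma_2$ initially enters a (possibly equal) attached component $W^\prime$ along a different endpoint.

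First I would verify that these two germs are genuinely distinct and can be spliced into a geodesic extension of $\gamma$ past $x$. If the two attachments come from distinct components, then $\gamma$ and $\gamma_2$ enter the disjoint sets $W$ and $W^\prime$ immediately (recall $\overline{W_i}\cap W_j = \emptyset$ for $i \neq j$ by Lemma \ref{components-length-space}), so they share no initial segment; if instead both attachments are the two endpoints of a single component $W$, then $\gamma$ and $\gamma_2$ enter $W$ at two different interior points (since $W$ is an embedded open interval), again sharing no initial segment. In either case, for small $\epsilon$ the concatenation $\hat\gamma := \gamma * \gamma_2|_{[0,\epsilon]}^{-1}$ is a short arc that is reduced: there is no backtracking at the junction $x$ (distinct germs), and a reduction straddling $x$ would force a repeated value $\hat\gamma(a) = \hat\gamma(b) = x$ with $a \neq b$, contradicting injectivity of the germs. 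By Corollary \ref{I.24}, a reduced rectifiable path is geodesic, so $\hat\gamma: [-\epsilon, r] \to X$ is a geodesic path extending $\gamma$, whose extension portion $\hat\gamma|_{(-\epsilon, 0)}$ lies inside $W^\prime \subset X \setminus \overline{\mathcal B(X)}$.

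Finally I would invoke Lemma \ref{extending-W} with this choice of $\hat\gamma$ to produce a strictly increasing sequence $t_i \nearrow 0$ with each $\hat\gamma(t_i)$ a branch point. But for $i$ large, $\hat\gamma(t_i) = \gamma_2(-t_i)$ lies in the interior of $W^\prime$, which is contained in $X \setminus \overline{\mathcal B(X)}$ and therefore contains no branch points whatsoever, a contradiction. This forces $|\rho^{-1}(x)| \leq 1$. The main obstacle I anticipate is the bookkeeping in the middle step: one must confirm, in both the two-component and the single-component configurations, that the spliced path $\hat\gamma$ is a bona fide geodesic extension of $\gamma$ (i.e.\ reduced), so that Lemma \ref{extending-W} genuinely applies. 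Once that is in hand, the contradiction is immediate, since the extension lives entirely within the branch-point-free region $X \setminus \overline{\mathcal B(X)}$, whereas Lemma \ref{extending-W} manufactures branch points arbitrarily close to $x$ along precisely that extension.
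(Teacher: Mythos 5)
Your proposal is correct and follows essentially the same route as the paper: splice the geodesic along one attached component with a short arc into the other attached endpoint to obtain a geodesic extension $\hat\gamma$ of $\gamma$ past $x$, then note that Lemma \ref{extending-W} forces branch points on the extension while the extension lies entirely in the branch-point-free set $X\setminus\overline{\mathcal B(X)}$. The paper states this in two sentences; you have merely filled in the verification that the spliced path is reduced, which is a worthwhile detail but not a different argument.
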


\begin{proof}
If there is a point $x\in \overline{\mathcal B(X)} \setminus \mathcal B(X)$ with $|\rho^{-1}(x)| \geq 2$, then one can concatenate the geodesic path $\gamma$ traveling along one of the incident $W_i$ with a small path along the incident $W_j$ (note that $i=j$ could a priori happen, if both endpoints of $W_i$ are attached to the same point). The resulting extension $\hat \gamma$ contains no branch points, contradicting Lemma \ref{extending-W}.
\end{proof}

This gives us a fairly good picture of how a $\pi_1$-convex compact 1-dimensional geodesic metric space is built. Specifically, such an $X$ consists of:
\begin{itemize}
	\item its set of branch points $\mathcal B(X)$, which are points where there are $\geq 3$ germs of geodesic paths emanating from the point (see Definition \ref{branching}),
	\item the set of points $\overline{\mathcal B(X)} \setminus \mathcal B(X)$, consisting of points which are not branching, but are limits of branch points (see Figure \ref{case2fig} for an illustration of such a point),
	\item a countable collection of closed intervals $W_i^\prime$, whose lengths shrink to zero, each of which is attached to $\overline{\mathcal B(X)}$ along its endponts (see Lemma \ref{components-comp-branch} and Proposition \ref{complement-branch-points}), and
	\item each point in $\overline{\mathcal B(X)} \setminus \mathcal B(X)$ has at most {\it one} $W_i^\prime$ attached to it (see Corollary \ref{unique-comp-attached}).
\end{itemize}


\section{Marked length spectrum rigidity}

This section is devoted to proving our {\bf Main Theorem}. Let us recall the general setup. We are given two compact $1$-dimensional geodesic spaces $X_1, X_2$, and an isomorphism  $\phi:\pi_1(X_1)\longrightarrow \pi_1(X_2)$ which preserves the marked length spectrum. Since the spaces $X_i$ are by hypothesis not contractible, we know that $Conv(X_i)\neq \emptyset$ (see Corollary \ref{cont-conv-nonempty}). We want to conclude that $Conv(X_1)$ is isometric to $Conv(X_2)$. From the analysis in the last section, we know that the inclusions $j_i:Conv(X_i)\hookrightarrow X_i$ induce isomorphisms $(j_i)_*$ on $\pi_1$ (see Corollary \ref{def-retract}). It follows from the structure theory of these spaces that composing the isomorphism $(j_i)_*$ with the length function $l_i$ gives the length function on the space $Conv(X_i)$. We have established that, under the hypotheses of the {\bf Main Theorem}, we actually have an induced isomorphism $\phi: \pi_1\big(Conv(X_1)\big) \rightarrow \pi_1\big(Conv(X_2) \big)$, preserving the marked length spectrum of the $\pi_1$-convex spaces $Conv(X_i)$. Thus the {\bf Main Theorem} will follow immediately from the following special case:

\begin{Thm}\label{special-case}
Let $X_1,X_2$ be a pair of compact, geodesic spaces of topological dimension one, and
assume each $X_i$ is $\pi_1$-convex (i.e. $X_i=Conv(X_i)$). Assume the two spaces have the same marked length spectrum, that is to say, there exists an isomorphism $\phi:\pi_1(X_1)\longrightarrow \pi_1(X_2)$ satisfying $l_2\circ \phi=l_1$. Then $X_1$ is isometric to $X_2$, and the isometry induces (up to change of basepoints) the isomorphism $\phi$ on $\pi_1(X_i)$.
\end{Thm}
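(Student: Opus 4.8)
The plan is to reconstruct the isometry $\Phi:X_1 \to X_2$ by first building it on the set of branch points $\mathcal{B}(X_1)$, then extending by completeness and by the interval structure established in the previous section. The key bridge between the two spaces is the length-preserving isomorphism $\phi$, which lets us transport geodesic loops from $X_1$ to $X_2$ while controlling their lengths. The central geometric gadget, as the introduction indicates, is that for any two branch points $p,q \in \mathcal{B}(X_1)$ joined by a minimal geodesic segment $\sigma$, one can build (using Lemma \ref{extend} and Lemma \ref{p-l-p}) a pair of geodesic loops $\gamma, \gamma'$ through $p$ and $q$ that \emph{intersect precisely along} $\sigma$. I would encode the phrase ``intersect precisely along $\sigma$'' in terms of lengths: for suitable concatenations, $l(\gamma) + l(\gamma') - l(\gamma * \gamma'^{\pm}) = 2\,l(\sigma)$, an identity that is then \emph{preserved} when we apply $\phi$ and compute in $X_2$, because $\phi$ preserves all the relevant free-homotopy-class lengths. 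This forces the image loops $\phi(\gamma), \phi(\gamma')$ to intersect in a geodesic segment of length exactly $l(\sigma)$.

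The next step is to show this assignment is well-defined: the segment (and hence the pair of endpoints) we obtain in $X_2$ must not depend on which loops $\gamma,\gamma'$ we chose to realize $\sigma$. I would argue this by varying the loops through a common branch point and checking that the overlap segments produced in $X_2$ all share a common endpoint, using the uniqueness of cyclically reduced/reduced representatives (Theorem \ref{loop_unique}, Proposition \ref{path_unique}) together with the concatenation analysis of Lemma \ref{concatenation} and Corollary \ref{multiple-concat}. Once the endpoint correspondence $\Phi_0:\mathcal{B}(X_1)\to X_2$ is well-defined, I would verify that it lands in $\mathcal{B}(X_2)$ (a branch point of $X_1$ should produce three germs of segments whose images branch in $X_2$, since $\phi$ is an isomorphism and the three-fold branching is detectable via loop lengths), that it is a bijection (applying the same construction to $\phi^{-1}$ yields an inverse), and crucially that it is \emph{distance-preserving}: $d_2(\Phi_0(p),\Phi_0(q)) = l(\sigma) = d_1(p,q)$. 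Preservation of distance along concatenated geodesic segments is where Corollary \ref{multiple-concat} does the real work, letting us add up segment lengths consistently.

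With an isometry $\mathcal{B}(X_1)\to \mathcal{B}(X_2)$ in hand, I would extend it to $\overline{\mathcal{B}(X_1)}$ by uniform continuity and completeness of $X_2$ (a compact geodesic space is complete), obtaining an isometry $\overline{\mathcal{B}(X_1)} \to \overline{\mathcal{B}(X_2)}$. Finally, I would extend over the complement $X_1 \setminus \overline{\mathcal{B}(X_1)}$: by Lemma \ref{components-comp-branch} and Proposition \ref{complement-branch-points}, each connected component is an open interval of finite length attached to $\overline{\mathcal{B}(X_1)}$ along at most two endpoints, and Corollary \ref{unique-comp-attached} guarantees that each limit-of-branch-points attaches at most one such interval. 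The segment correspondence then matches each maximal interval in $X_1$ to a unique interval of equal length in $X_2$, and one glues these isometries of intervals to the isometry of $\overline{\mathcal{B}(X_1)}$ to produce the global isometry $\Phi:X_1\to X_2$. A final check confirms that $\Phi$ induces $\phi$ on $\pi_1$ up to basepoint change, since $\Phi$ was built to carry geodesic loops to their $\phi$-images.

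I expect the \textbf{main obstacle} to be establishing that the endpoint correspondence is genuinely well-defined and that it preserves concatenation of geodesic segments, i.e. the passage from ``the lengths match'' to ``the \emph{geometry} matches, consistently and additively.'' The subtlety is that a priori $\phi$ only sees free homotopy classes and their lengths, not the actual segments; translating a purely length-theoretic identity into a statement that two specific geodesic subsegments of $X_2$ coincide (rather than merely having equal length) requires carefully extracting, via the reduction lemmas, the precise overlap behavior of the transported loops, and ruling out the possibility that different constructions yield different segments with coincidentally equal lengths.
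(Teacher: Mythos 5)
Your proposal follows essentially the same route as the paper: the paper's ``${\bf p}$-distinguishing pairs of geodesic loops'' are exactly your loops intersecting precisely along $\sigma$, your length identity $l(\gamma)+l(\gamma')-l(\overline{\gamma*\gamma'^{\pm 1}})=2\,l(\sigma)$ is the paper's equation for recovering $l({\bf p})$ from the marked length spectrum, and the well-definedness, incidence/concatenation preservation, extension to $\overline{\mathcal B(X_1)}$ by completeness, and interval-by-interval extension over $X_1\setminus\overline{\mathcal B(X_1)}$ are precisely Claims 3--6 of Proposition \ref{isombranch} and the final argument of Theorem \ref{special-case}. The only points you leave implicit are the degenerate cases $\mathcal B(X_1)=\emptyset$ (both spaces are circles, handled by Corollary \ref{circle}) and $|\mathcal B(X_1)|=1$ (where there is no second branch point to join to, handled via Lemma \ref{disting-loop} and Corollary \ref{disting-from-branch-point}), but these are routine.
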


The rest of this section will be devoted to establishing Theorem \ref{special-case}.
We start by introducing a few definitions and proving an important lemma.

\begin{Def}
Given a geodesic path ${\bf p}$ in a $1$-dimensional geodesic space $X$ joining points $p$ to $q$, we say that a pair of geodesic loops $\gamma_1, \gamma_2$ based at $p$ and parametrized by arc-length are \emph{${\bf p}$-distinguishing} provided that $\gamma_1|_{[0,l({\bf p})]}\equiv {\bf p}\equiv \gamma_2|_{[0,l({\bf p})]}$, where $l({\bf p})$ is the length of the path ${\bf p}$ (in particular, the geodesic loops start out by respecting the orientation on ${\bf p}$). Furthermore, we require that $[0,l({\bf p})]$ be a {\it maximal} subinterval (with respect to inclusion) on which the loops $\gamma_1$ and $\gamma_2$ coincide. 


If a geodesic path ${\bf p}$ has a pair of ${\bf p}$-distinguishing geodesic loops, then we say the geodesic path 
${\bf p}$ is \emph{distinguished}. The collection of distinguished paths inside a $1$-dimensional geodesic space $X$
will be denoted by $\mathcal D (X)$.
\end{Def}


The importance of ${\bf p}$-distinguishing loops lies in the fact that, if $\gamma_1$ and $\gamma_2$ are ${\bf p}$ distinguishing, and if we use an overline to denote the geodesic loop freely homotopic to a given loop, then we automatically have (using Lemma \ref{concatenation}):
\[l(\overline{\gamma_2*\gamma_1^{-1}})=l(\gamma_1)+l(\gamma_2)-2l({\bf p}).\]
In particular, since the concatenated loop represents the product of the elements corresponding to $\gamma_i$ in $\pi_1(X,{\bf p}(0))$, we see that the length of the geodesic path ${\bf p}$ can be recovered from the marked length spectrum. 
It is also easy to verify that the endpoints of the path ${\bf p}$ are branching (the germs of the paths ${\bf p}$ and the appropriately oriented $\gamma_1, \gamma_2$ will all be distinct), showing the:

\begin{Lem}
If ${\bf p} \in \mathcal D(X)$ joins the points $p, q \in X$, then both endpoints are branch points, i.e. $p, q \in \mathcal B(X)$.
\end{Lem}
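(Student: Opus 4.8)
The plan is to verify Definition \ref{branching} at each endpoint by exhibiting three germs of geodesic paths with pairwise reduced (hence geodesic) concatenations. Write $t=l(\mathbf p)$ and let $L_1,L_2$ be the lengths of the geodesic loops $\gamma_1,\gamma_2$, which are cyclically reduced by Corollary \ref{semunique}. Two facts will be used repeatedly: any subpath of a cyclically reduced loop is itself a reduced path (a reduction confined to the subpath would reduce the whole loop), and every reduced path is geodesic (Corollary \ref{I.24}); consequently all the germs constructed below are genuine geodesic paths, and the main task reduces to checking distinctness and reducedness of concatenations, for which Lemma \ref{concatenation} is the key tool.

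First I would treat the terminal endpoint $q=\mathbf p(t)$. Based at $q$ I take the three germs $a_3=(\gamma_1|_{[t-\epsilon,t]})^{-1}$ (running back along $\mathbf p$), $a_1=\gamma_1|_{[t,t+\epsilon]}$, and $a_2=\gamma_2|_{[t,t+\epsilon]}$, reparametrized from $0$. Since $[0,t]$ is the maximal interval on which $\gamma_1$ and $\gamma_2$ agree, $a_1$ and $a_2$ share no common initial segment; by Lemma \ref{concatenation} the only way $a_1*a_2^{-1}$ could fail to be reduced is through such a shared initial segment, so $a_1*a_2^{-1}$ is reduced. The other two concatenations are $a_1*a_3^{-1}=\gamma_1|_{[t-\epsilon,t+\epsilon]}$ and $a_2*a_3^{-1}=\gamma_2|_{[t-\epsilon,t+\epsilon]}$ (using $\mathbf p=\gamma_2|_{[0,t]}$), each a subpath of a cyclically reduced loop and hence reduced. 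Thus Definition \ref{branching} holds and $q\in\mathcal B(X)$.

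Next I would treat the initial endpoint $p=\mathbf p(0)$, using the outgoing germ $g_0=\gamma_1|_{[0,\epsilon]}=\gamma_2|_{[0,\epsilon]}$ (along $\mathbf p$) together with the two incoming germs $c_i=(\gamma_i|_{[L_i-\epsilon,L_i]})^{-1}$. Cyclic reducedness of $\gamma_i$ at its basepoint is exactly the statement that $g_0*c_i^{-1}$ is reduced, which simultaneously disposes of two of the three concatenations and shows $g_0\neq c_i$. The remaining concatenation $c_1*c_2^{-1}$ is reduced, again by Lemma \ref{concatenation}, precisely when $c_1$ and $c_2$ share no common initial segment, i.e. when $\gamma_1$ and $\gamma_2$ have distinct incoming germs at $p$. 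Granting this, the three germs $g_0,c_1,c_2$ witness $p\in\mathcal B(X)$.

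The one delicate point, and the place where the full force of the distinguishing hypothesis enters, is the distinctness of the incoming germs $c_1,c_2$. If $\gamma_1$ and $\gamma_2$ shared a terminal segment into $p$, then the cyclic reduction of $\gamma_2*\gamma_1^{-1}$ would cancel strictly more than the $2l(\mathbf p)$ contributed by the common sub-path $\mathbf p$, contradicting the length identity $l(\overline{\gamma_2*\gamma_1^{-1}})=l(\gamma_1)+l(\gamma_2)-2l(\mathbf p)$ recorded for a distinguishing pair. Hence the distinguishing condition must be read as asserting that $\mathbf p$ is the \emph{entire} common arc of $\gamma_1$ and $\gamma_2$, and under that reading $c_1\neq c_2$. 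Everything else is routine: confirm the subpath-reducedness facts and apply Lemma \ref{concatenation} to convert ``no shared initial segment'' into ``reduced concatenation'' at each vertex.
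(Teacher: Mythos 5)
Your proof is correct and follows the same route as the paper, which disposes of this lemma in a single parenthetical remark asserting that the germs of $\mathbf{p}$ and of the suitably oriented $\gamma_1,\gamma_2$ are pairwise distinct; you have simply written out that verification in full. You are also right that the only non-routine point is the distinctness of the two incoming germs at $p=\mathbf{p}(0)$: the definition of a distinguishing pair must indeed be read so that $\gamma_1$ and $\gamma_2$ agree exactly on $\mathbf{p}$ (including at the basepoint, interpreted cyclically), since otherwise both the displayed identity $l(\overline{\gamma_2*\gamma_1^{-1}})=l(\gamma_1)+l(\gamma_2)-2l(\mathbf{p})$ and this lemma would fail (e.g.\ for two loops through a valence-two basepoint sharing a terminal arc), and the distinguishing pairs actually produced in Lemma \ref{disting} do satisfy this stronger condition.
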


We do not know whether or not the collection $\mathcal D(X)$ of distinguished geodesics coincides with the
set of all geodesics whose endpoints are branch points. Our next result aims at showing that geodesics which are injective near their endpoints are indeed distinguished.


\begin{Lem} \label{disting}
Let $X$ be a $\pi_1$-convex compact 1-dimensional geodesic metric space, and let ${\bf p}: [0, L] \rightarrow X$ be a geodesic path joining a pair of branch points lying in $X=Conv(X)$.  Assume that, for $\epsilon_0$ small enough, the set $[0, \epsilon_0] \cup [L-\epsilon_0, L]$ lies in the set of injectivity of the map ${\bf p}$, i.e. for any $t$ in this set, $\mathbf{p}^{-1}(\mathbf{p}(t))=\{t\}$. Then there exists a pair of ${\bf p}$-distinguishing geodesic loops, i.e. ${\bf p} \in \mathcal D (X)$.
\end{Lem}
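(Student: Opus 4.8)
The plan is to construct, from the geodesic path $\mathbf{p}$ joining branch points $p = \mathbf{p}(0)$ and $q = \mathbf{p}(L)$, a pair of geodesic loops that coincide exactly along $\mathbf{p}$ and then diverge. Since $p$ and $q$ are branch points, Definition \ref{branching} supplies at each endpoint a germ of a geodesic path that is distinct from $\mathbf{p}$ (i.e. $\mathbf{p}\ast(\text{germ})$ reduces through the endpoint). The injectivity hypothesis near the endpoints is what lets me promote these germs into honest loops without $\mathbf{p}$ accidentally folding back on itself. First I would fix small distinguishing germs $\alpha$ at $p$ and $\beta$ at $q$, with $\alpha$ chosen so that $\alpha^{-1}\ast\mathbf{p}$ is reduced (a genuine branching direction at $p$) and likewise $\beta$ so that $\beta\ast\mathbf{p}^{-1}$ is reduced at $q$; note $\mathbf{p}$ is non-self-terminating and non-self-originating on the relevant end-intervals by the injectivity assumption.

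The core construction uses Lemma \ref{extend}. The path $\mathbf{p}$ is itself non-self-terminating and non-self-originating near its endpoints, but to invoke Lemma \ref{extend} directly I want a geodesic path that is globally non-self-terminating/originating; the cleanest route is to extend $\mathbf{p}$ \emph{twice}, using the two branching germs to produce two different geodesic loops. Concretely, I would build geodesic loops $\gamma_1, \gamma_2$ as follows: attach the branching germ $\alpha$ at the $p$-end and the germ $\beta$ at the $q$-end in two inequivalent ways, then pass to the cyclically reduced representative (Corollary \ref{semunique}). The key point, carried out in the style of the case analysis inside the proof of Lemma \ref{extend}, is that because $\alpha^{-1}\ast\mathbf{p}$ and $\beta\ast\mathbf{p}^{-1}$ are reduced at the endpoints, the reduction of the concatenation \emph{cannot eat into the interior of $\mathbf{p}$}: the injectivity near the endpoints guarantees there is no cancellation that would shorten the surviving copy of $\mathbf{p}$. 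Thus each resulting geodesic loop contains $\mathbf{p}$ as a subsegment, i.e. $\gamma_i|_{[0,L]} \equiv \mathbf{p}$ after suitable reparametrization and reorientation.

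To get genuinely \emph{distinguishing} loops I must arrange both the overlap property and its \emph{maximality}. For the overlap, by reorienting as in the remark following Lemma \ref{p-l-p}, I align both loops to traverse $\mathbf{p}$ first. For maximality, I would choose the two branching germs at $p$ (or at $q$) to be \emph{distinct} from one another: since $p$ is a branch point there are at least three germs, namely the one coming from $\mathbf{p}$ itself plus two others, so I can select $\alpha_1 \neq \alpha_2$ with each $\alpha_i^{-1}\ast\mathbf{p}$ reduced and $\alpha_1^{-1}\ast\alpha_2$ reduced. Then $\gamma_1$ and $\gamma_2$ agree along all of $[0,L]$ but diverge immediately past $L$ (or immediately before $0$), so $[0,L]$ is a maximal common interval, which is exactly the definition of $\mathbf{p}$-distinguishing.

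\textbf{The main obstacle} I anticipate is controlling the reduction at the endpoints with enough precision to guarantee that $[0,L]$ is \emph{maximal}, not merely that $\mathbf{p}$ survives. Guaranteeing survival of $\mathbf{p}$ follows fairly directly from the non-self-terminating clause of Lemma \ref{p-l-p} together with the endpoint injectivity; the delicate part is ruling out that the two loops secretly continue to agree \emph{past} the endpoints of $\mathbf{p}$. This requires a careful choice of the branching germs so that the divergence happens precisely at $\mathbf{p}(L)$ (resp. $\mathbf{p}(0)$) and not later, and it is here that I expect to reuse, essentially verbatim, the cyclic-reduction bookkeeping of Cases IIa and IIb in the proof of Lemma \ref{extend} to track exactly where cancellation stops. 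A secondary subtlety is the degenerate possibility that the two germs available at $p$ force the loops to agree on the far side; handling this may require splitting the distinguishing data between the two endpoints $p$ and $q$, which the hypothesis (injectivity at \emph{both} ends) is evidently designed to permit.
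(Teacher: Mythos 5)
Your plan follows the paper's proof essentially step for step: use the branch-point condition together with Lemma \ref{concatenation} to extend $\mathbf{p}$ past each endpoint in two reduced ways, use the endpoint injectivity to truncate those extensions so that the resulting paths are non-self-originating and non-self-terminating (this is where the paper spends most of its effort, including choosing the truncation parameters with $\delta+\delta' < d(p,q)$ so the two short extensions cannot meet each other), and then close up via Lemma \ref{extend}. The one caution is that maximality requires the two loops to branch apart at $\mathbf{p}(L)$ --- making the germs distinct only at $\mathbf{p}(0)$, one of the alternatives you float, would allow the loops to keep agreeing past $L$ --- but your primary construction, which separates the extensions at both endpoints exactly as the paper does, already takes care of this.
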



\begin{proof}
Since our path ${\bf p}$ joins a pair of branch points, it is easy to see that there are a pair of geodesic paths $\gamma_1,\gamma_2$ in $X$ which intersect precisely in ${\bf p}$.  Indeed, let us start by considering $p:={\bf p}(0)$, and note that since this point is branching, there exist a triple of geodesic paths $\gamma_i:[0,\epsilon]\longrightarrow X$ emanating from the point $p$ with the property that each concatenated path $\gamma_i*\gamma_j^{-1}$ is a geodesic path. Now consider the three possible concatenations ${\bf p}*\gamma_i^{-1}$.  We claim that at least two of them have to be geodesic paths.  Indeed, if not, then two of these concatenations, say ${\bf p}*\gamma_1^{-1}$ and ${\bf p}*\gamma_2^{-1}$ have to be reducible.  Using Lemma \ref{concatenation}, this forces ${\bf p}$, $\gamma_1$, and $\gamma_2$ to coincide in a small enough interval $[0,\epsilon ^\prime]$.  But this contradicts the fact that $\gamma_2*\gamma_1^{-1}$ is reduced.  So we can extend ${\bf p}$ past $p$ in two distinct ways, and still have a reduced path.  Similarly, we can extend ${\bf p}$ past the point $q:={\bf p}(L)$ in two distinct ways, and still have a reduced path.  This gives us a pair of geodesic segments which are distinct, then come together and agree precisely along ${\bf p}$, and then separate again. 

Shrinking the two geodesic segments if need be, we can assume that they are defined on $[-\epsilon, L+\epsilon]$, and that the geodesic ${\bf p}$ corresponds to the image of $[0,L]$ in both geodesics.  Now we claim that, perhaps by further shrinking the geodesic segments $\gamma_i$, we can ensure that the reduced paths we find above are non-self-terminating and non-self-originating. Without loss of generality, assume that for our triple of geodesics $\gamma_i$, it is $\gamma_1$ and $\gamma_2$ that geodesically extend $\mathbf{p}$.  As $\mathbf{p}*\gamma_j^{-1}$ ($j=1,2$) is geodesic, from Lemma \ref{concatenation} we see that there exist arbitrarily small $t$ for which $\gamma_j(t)\neq \mathbf{p}(t)$.  We require something slightly stronger, i.e. a small value of $t$ for which $\gamma_j(t) \notin \mathbf{p}\big([0,L]\big)$.  

By way of contradiction, suppose this were not the case. We choose a $0<\delta$, with the property that $\gamma_j(t) \in \mathbf{p}\big([0,T]\big)$ for all $t\in [0, \delta]$. But recall that, by hypothesis, ${\bf p}$ is {\it injective} on $[0, \epsilon_0]$, so there is a definite positive distance between $p$ and the image set $\mathbf{p}\big([\epsilon_0,T]\big)$. So at the cost of shrinking $\delta$, we can in fact assume that the set $\gamma_j\big([0, \delta] \big)$ has image in the set $\mathbf{p}\big([0,\epsilon_0]\big)$. Then the path $\gamma_j|_{[0, \delta]}$ has image contained entirely in the embedded path $\mathbf{p}$, and satisfies $\gamma_j(0)=\mathbf{p}(0)$. Since both curves $\gamma_j$ and $\mathbf{p}$ are geodesics parametrized by arclength, this forces $\gamma_j|_{[0, \delta]} \equiv\mathbf{p}|_{[0, \delta]}$, contradicting the fact that $\mathbf{p}* \gamma_j^{-1}$ is irreducible. Thus we find arbitrarily small values of $t$ satisfying $\gamma_j(t) \notin \mathbf{p}\big([0,T]\big)$; let $\delta_0$ be such a value. The curve $\gamma_j$ might not be an embedded path, so conceivably we could have points $t\in [0, \delta_0)$ with the property that $\gamma_j(t)=\gamma_j(\delta_0)$. But the set of such values forms a closed subset of $[0, \delta_0)$, which is bounded away from zero. Hence, there exists a smallest $\delta \in (0, \delta_0]$ with the property that $\gamma_j(\delta) = \gamma_j(\delta_0)$. Then by the choice of $\delta$, we have $\gamma_j(\delta)\notin \gamma_j\big({[0, \delta)}\big)$. We conclude that the concatenation $\mathbf{p}*(\gamma_j|_{[0,\delta]})^{-1}$ is non-self-originating.

We can run a symmetric argument at the other endpoint $q=\mathbf{p}(L)$ of the path $\mathbf{p}$. This yields a short geodesic $\gamma_j^\prime|_{[0, \delta^\prime]}$ originating at $q$, with the property that the concatenation $\gamma_j^\prime|_{[0, \delta^\prime]}*\mathbf{p}$ is irreducible, and non-self-terminating. Finally, consider the concatenation $\gamma_j^\prime|_{[0, \delta^\prime]}*\mathbf{p}*(\gamma_j|_{[0,\delta]})^{-1}$. In view of our discussion above, the only way this concatenation could fail to be non-self-originating (respectively, non-self-terminating) is if $\gamma_j(\delta)\in \gamma_j^\prime|_{[0, \delta^\prime]}$ (resp. $\gamma_j^\prime(\delta^\prime)\in \gamma_j|_{[0, \delta]}$). But recall that $\delta, \delta^\prime$ could be chosen arbitrarily close to zero. By the injectivity assumption, the two endpoints $p, q$ are at a positive distance $d(p, q) >0$ apart. Now choose $\delta, \delta^\prime$ small enough to satisfy $\delta + \delta^\prime < d(p,q)$. We verify the condition for non-self-originating: if $\gamma_j(\delta)\in \gamma_j^\prime|_{[0, \delta^\prime]}$, then concatenating $\gamma_j$ with a subpath of $\gamma_j^\prime$ gives us a path of length at most $\delta + \delta^\prime < d(p,q)$ joining the points $p$ and $q$, a contradiction. The condition for non-self-terminating is completely analogous. This confirms that the concatenations $\gamma_j^\prime|_{[0, \delta^\prime]}*\mathbf{p}*(\gamma_j|_{[0,\delta]})^{-1}$ (for both $j=1, 2$) are a pair of geodesic paths which are both non-self-originating and non-self-terminating.

To finish, we want to extend these geodesic paths to closed geodesic loops.  But that is precisely what Lemma \ref{extend} guarantees (the hypotheses of the Lemma are satisfied because $X$ is $\pi_1$-convex).  Hence we obtain a pair of ${\bf p}$-distinguishing geodesic loops, and we are done. 
\end{proof}

Clearly, distance minimizers are geodesic paths which are {\it globally} injective, hence satisfy the conditions of Lemma \ref{disting}. This immediately implies:

\begin{Cor}\label{dist-min-are-disting}
Let $p, q\in \mathcal B(X)$ be a pair of distinct branch points, and let ${\bf p}$ be a distance minimizer joining $p$ to $q$.
Then ${\bf p}$ is distinguished, i.e. ${\bf p} \in \mathcal D(X)$.
\end{Cor}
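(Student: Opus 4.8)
The plan is to derive this directly from Lemma \ref{disting}, so the only real work is to verify that a distance minimizer meets the hypotheses of that lemma. First I would recall the setup: $X$ is $\pi_1$-convex, the two points $p, q$ are assumed to be branch points (so $p, q \in \mathcal B(X)$), and ${\bf p}$ is a distance minimizer joining them. By Corollary \ref{geodesic}, every distance minimizer in a compact $1$-dimensional geodesic space is automatically a geodesic path, so the first requirement of Lemma \ref{disting} — that ${\bf p}$ be a geodesic path joining a pair of branch points — is immediate.

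The heart of the matter is to check the injectivity hypothesis of Lemma \ref{disting}: that for $\epsilon_0$ small enough, ${\bf p}$ is injective on $[0,\epsilon_0]\cup[L-\epsilon_0,L]$. In fact I would observe the stronger statement that a distance minimizer is \emph{globally} injective. The argument is the standard one: if ${\bf p}(s)={\bf p}(t)$ for some $s<t$, then replacing the subpath ${\bf p}|_{[s,t]}$ by the constant path at ${\bf p}(s)$ produces a path joining $p$ to $q$ whose length is $L-(t-s)<L=d(p,q)$, contradicting the fact that ${\bf p}$ realizes the distance between its endpoints (here I use that ${\bf p}$ is parametrized by arclength, so that $t-s>0$ forces a strict drop in length). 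Hence ${\bf p}$ is an embedded path, and in particular is injective on any subinterval, so the injectivity-near-the-endpoints hypothesis of Lemma \ref{disting} holds for every $\epsilon_0$.

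With both hypotheses verified, I would simply invoke Lemma \ref{disting} to produce a pair of ${\bf p}$-distinguishing geodesic loops, which by definition means ${\bf p}\in\mathcal D(X)$, completing the proof. I do not anticipate a genuine obstacle here: this is a clean corollary whose entire content is the elementary observation that distance minimizers are embedded, after which Lemma \ref{disting} does all the work. The only point requiring any care is to state the global-injectivity argument precisely enough that it visibly supplies the local injectivity near \emph{both} endpoints that Lemma \ref{disting} demands.
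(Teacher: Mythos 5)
Your proposal is correct and is exactly the paper's argument: the paper deduces this corollary in one line by noting that distance minimizers are globally injective geodesic paths (via Corollary \ref{geodesic}) and hence satisfy the hypotheses of Lemma \ref{disting}. You have simply spelled out the standard cut-out-the-loop argument for global injectivity, which the paper takes as evident.
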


Note that in Lemma \ref{disting}, the local injectivity condition in particular forces the endpoints of the curve ${\bf p}$ to be distinct. Next we consider geodesic loops based at a branch point, and provide a condition for them to be distinguished.

\begin{Lem}\label{disting-loop}
Let $X$ be a $\pi_1$-convex compact 1-dimensional geodesic metric space, and let ${\bf p}: [0, L] \rightarrow X$ be a geodesic path satisfying ${\bf p}(0)={\bf p}(L)=p \in \mathcal B(X)$. Viewing ${\bf p}$ instead as a map $(S^1, *) \rightarrow (X,p)$, assume that there exists an $\epsilon_0$ so that the $\epsilon_0$-neighborhood of the basepoint $*$ lies in the set of injectivity of the map $\mathbf{p}$ (in the sense of Lemma \ref{disting}). Then the path ${\bf p}$ is distinguished, i.e. ${\bf p}\in \mathcal D(X)$.
\end{Lem}

\begin{proof}
One loop in our distinguishing pair is $\mathbf{p}^2$.  For the second loop, invoke the local injectivity condition as in the proof of Lemma \ref{disting} to find a geodesic $\gamma_1$ leaving $p$ such that $\gamma_1*\mathbf{p}$ is non-self-terminating. By Lemma \ref{extend} we may extend it to a loop $\mathbf{c}*\gamma_1*\mathbf{p}$ which is reduced, although it may not be cyclically reduced (as $\gamma_1*\mathbf{p}$ is self-originating).  If $\mathbf{c}$ does not intersect $\mathbf{p}$ in any interval properly containing $p$, it is cyclically reduced and forms a distinguishing pair for $\mathbf{p}$ with $\mathbf{p}^2$.  If it does intersect $\mathbf{p}$ in such an interval, let $\mathbf{c}'$ be the sub-path of $c$ up to its first intersection with $\mathbf{p}$.  Then $\gamma_1^{-1}*\mathbf{c}'^{-1}*\mathbf{p}*\mathbf{c}'*\gamma_1*\mathbf{p}$ forms a distinguishing pair for $\mathbf{p}$ with $\mathbf{p}^2$.
\end{proof}


A nice consequence of the previous few results is the following:

\begin{Cor}\label{disting-from-branch-point}
Let $X$ be a $\pi_1$-convex compact 1-dimensional geodesic metric space, and let $p\in \mathcal B(X)$ be an arbitrary 
branch point. Then there exists a distinguished geodesic ${\bf p} \in \mathcal D(X)$ originating at $p$.
\end{Cor}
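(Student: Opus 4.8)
The plan is to prove the statement by a dichotomy according to whether or not $p$ is the only branch point of $X$. In both cases the goal is to exhibit a geodesic originating at $p$ whose membership in $\mathcal{D}(X)$ follows from one of the two production lemmas established above, namely Corollary \ref{dist-min-are-disting} (distance minimizers between distinct branch points are distinguished) and Lemma \ref{disting-loop} (geodesic loops based at a branch point and injective near the basepoint are distinguished).

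First suppose $\mathcal B(X)$ contains a second point $q\neq p$. Since $X$ is a geodesic space, there is a distance minimizer ${\bf p}$ joining $p$ to $q$, and this is a geodesic path originating at $p$. By Corollary \ref{dist-min-are-disting}, ${\bf p}\in\mathcal D(X)$, and we are done immediately. The interesting case is therefore $\mathcal B(X)=\{p\}$; as a single point is closed, we also have $\overline{\mathcal B(X)}=\{p\}$. Here I would use the structure theory for $\pi_1$-convex spaces to show that $X$ is a wedge of geodesic circles glued at $p$. Concretely, let $W$ be any connected component of $X\setminus\overline{\mathcal B(X)}=X\setminus\{p\}$ (such a $W$ exists since $X$ is non-contractible, hence not a single point). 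By Lemma \ref{components-comp-branch}, $W$ is isometric to an open interval of finite length, so its metric completion is a closed interval $[0,L]$. The set $\overline W\setminus W$ is nonempty (otherwise $W$ would be clopen in the connected space $X$) and, by the separation statement in Lemma \ref{components-length-space}, is contained in $\overline{\mathcal B(X)}=\{p\}$. Hence both endpoints of $[0,L]$ map to $p$ under the attaching map of Proposition \ref{complement-branch-points}, so $\overline W$ is the image of a loop ${\bf p}:[0,L]\to X$ with ${\bf p}(0)={\bf p}(L)=p$ that is embedded on $(0,L)$.

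It then remains to verify that ${\bf p}$ satisfies the hypotheses of Lemma \ref{disting-loop}. The key point is injectivity near the basepoint $*$: for small $\epsilon_0$ the two short arcs ${\bf p}\big([0,\epsilon_0]\big)$ and ${\bf p}\big([L-\epsilon_0,L]\big)$ lie inside the embedded interval $W$ and approach $p$ along two \emph{distinct} germs. Indeed, if these germs coincided, two disjoint subarcs of the embedded one-dimensional interval $W$ would both limit to $p$ along the same germ, forcing a self-intersection and contradicting injectivity of ${\bf p}$ on $(0,L)$. Consequently every point of the $\epsilon_0$-neighborhood of $*$ has a unique ${\bf p}$-preimage, and ${\bf p}$ is moreover cyclically reduced, hence non-constant and a genuine geodesic path (its reducedness following from Corollary \ref{I.24} together with Theorem \ref{loop_unique}). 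Lemma \ref{disting-loop} then supplies a ${\bf p}$-distinguishing pair, so ${\bf p}\in\mathcal D(X)$ originates at $p$, completing this case.

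The main obstacle is the second case. Whereas the first case is an immediate application of Corollary \ref{dist-min-are-disting}, here one must manufacture a geodesic loop based at $p$ that is genuinely injective in a neighborhood of the basepoint so that Lemma \ref{disting-loop} applies. The delicate part is ruling out the loop doubling back on itself at $p$ — that is, confirming that the two ends of the component $W$ leave $p$ along distinct germs — which is precisely where the embeddedness of $W$ and the one-dimensionality of $X$ are used.
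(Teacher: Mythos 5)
Your proof is correct and follows essentially the same route as the paper: when $|\mathcal B(X)|\geq 2$ you take a distance minimizer to a second branch point and invoke Corollary \ref{dist-min-are-disting}, and when $\mathcal B(X)=\{p\}$ you use the structure theory (Lemma \ref{components-comp-branch}, Proposition \ref{complement-branch-points}) to realize a component of $X\setminus\{p\}$ as an embedded geodesic loop based at $p$ and apply Lemma \ref{disting-loop}. Your extra verification that the loop is injective near the basepoint is a correct elaboration of a step the paper leaves implicit.
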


\begin{proof}
If $|\mathcal B(X)|\geq 2$, then given any branch point $p\in \mathcal B(X)$, we can find a branch point $q\in \mathcal B(X)$ with $q\neq p$. Let ${\bf p}$ be a distance minimizer from $p$ to $q$, and apply Corollary \ref{dist-min-are-disting} to see that this ${\bf p}$ is a distinguished geodesic.

If $\mathcal B(X)$ consists of the single point $p$, then $\overline{\mathcal B(X)}= \mathcal B(X)= \{p\}$, and the structure theory tells us that $X \setminus \{p\}$ consists of a countable collection of open intervals, of diameter shrinking to zero, each of which is attached to $p$ at both endpoints (see Lemma \ref{components-comp-branch} and Proposition \ref{complement-branch-points}). In other words, $X$ is either a bouquet of finitely many circles (with lengths attached to each loop), or a generalized Hawaiian earring space. In either case, we can take a geodesic ${\bf p}$ in $X$ which loops through a single connected component of $X\setminus \{p\}$. Lemma \ref{disting-loop} tells us ${\bf p}$ is a distinguished geodesic.
\end{proof}

Before proving our main proposition, we give one last definition.

\begin{Def}\label{geod-incident}
Let $X$ be a compact geodesic space of topological dimension one, and ${\bf p_1}$, ${\bf p_2}$ a pair of geodesics in the space, parametrized by the intervals $[a_1,b_1]$ and $[a_2,b_2]$ respectively.  We 
say that  ${\bf p_1}$ and ${\bf p_2}$ are {\it incident}, provided that ${\bf p_1}(b_1)={\bf p_2}(a_2)$.  We say that they are {\it geodesically incident} provided that, in addition to being incident, the concatenated path ${\bf p_2}*{\bf p_1}$ is geodesic.
\end{Def}

\begin{Prop} \label{isombranch} 
Let $X_1,X_2$ be a pair of $\pi_1$-convex spaces.  If they have the same marked length spectrum, then there is an isometry from the set $\mathcal B(X_1)$ of branch points of $X_1$ to the set $\mathcal B(X_2)$ of branch points of $X_2$.
\end{Prop}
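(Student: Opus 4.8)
The goal is to build an isometry $\Phi:\mathcal B(X_1) \to \mathcal B(X_2)$ from the marked-length-spectrum-preserving isomorphism $\phi$. The plan is to use the machinery of distinguished geodesics developed above. The key observation recorded before the statement is that if ${\bf p}\in\mathcal D(X_1)$ has ${\bf p}$-distinguishing loops $\gamma_1,\gamma_2$, then the length $l({\bf p})$ is recoverable from the marked length spectrum via the formula $l(\overline{\gamma_2*\gamma_1^{-1}})=l(\gamma_1)+l(\gamma_2)-2l({\bf p})$, where the concatenated loop represents the product of the corresponding elements of $\pi_1$. The plan is to transport a distinguished geodesic and its distinguishing pair of loops across $\phi$, and to argue that the image loops are again distinguishing for some geodesic in $X_2$ of the \emph{same} length. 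I would first use Corollary~\ref{disting-from-branch-point} to guarantee that every branch point $p\in\mathcal B(X_1)$ is the endpoint of \emph{some} distinguished geodesic, so that branch points are ``visible'' to $\phi$ through distinguished data.

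\textbf{Constructing the correspondence on branch points.} Given $p\in\mathcal B(X_1)$, choose a distinguished geodesic ${\bf p}$ originating at $p$ with ${\bf p}$-distinguishing loops $\gamma_1,\gamma_2$ based at $p$. Apply $\phi$ to the homotopy classes $[\gamma_1],[\gamma_2]$; let $\delta_1,\delta_2\subset X_2$ be the corresponding geodesic loops (unique by Corollary~\ref{semunique}). Because $\phi$ preserves the marked length spectrum, $l(\delta_i)=l(\gamma_i)$ and $l(\overline{\delta_2*\delta_1^{-1}})=l(\overline{\gamma_2*\gamma_1^{-1}})$. The critical step is to show that $\delta_1,\delta_2$ share a common initial geodesic segment ${\bf q}$ of length exactly $l({\bf p})$, so that they are ${\bf q}$-distinguishing in $X_2$: the length formula forces $l({\bf q})=\tfrac12\big(l(\delta_1)+l(\delta_2)-l(\overline{\delta_2*\delta_1^{-1}})\big)=l({\bf p})$. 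Here I would invoke Lemma~\ref{concatenation} (applied to the reduced representatives of $\delta_1,\delta_2$ after arranging a common basepoint) to see that the maximal common-agreement interval is forced to have exactly this length. The common endpoint of ${\bf q}$, namely ${\bf q}(l({\bf q}))$, is a branch point by the Lemma preceding the Proposition, and I would \emph{define} $\Phi(p)$ to be the initial point ${\bf q}(0)=\delta_1(0)=\delta_2(0)$.

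\textbf{Well-definedness and isometry.} The central difficulty — and the step I expect to be the main obstacle — is showing $\Phi$ is well-defined, i.e. independent of the choice of distinguished geodesic and distinguishing loops at $p$. If ${\bf p}'$ is a second distinguished geodesic at $p$ with loops $\gamma_1',\gamma_2'$, I would compare the two constructions by considering the geodesic distances: for any two distinguished geodesics emanating from $p$ I would look at concatenations $\gamma_i*(\gamma_j')^{-1}$ and read off, via the same length-formula bookkeeping, that the images meet $\Phi(p)$ consistently. Concretely, the claim is that the marked length spectrum detects not just the length of each distinguished geodesic but the length of the overlap of any two geodesics sharing the initial point $p$; this ``triangle'' data pins down the image point uniquely. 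To upgrade $\Phi$ to an isometry, take two branch points $p,p'$, join them by a distance minimizer ${\bf r}$, which is distinguished by Corollary~\ref{dist-min-are-disting}. The construction assigns to ${\bf r}$ a distinguished geodesic in $X_2$ of length $l({\bf r})=d(p,p')$ joining $\Phi(p)$ to $\Phi(p')$; since that image geodesic has length equal to $d_{X_1}(p,p')$ and joins the two image points, one gets $d_{X_2}(\Phi(p),\Phi(p'))\le d_{X_1}(p,p')$, and symmetry via $\phi^{-1}$ gives the reverse inequality, yielding distance preservation. Surjectivity follows by running the same construction with $\phi^{-1}$. I expect the genuinely delicate points to be (i) arranging a \emph{common basepoint} so that Lemma~\ref{concatenation} applies cleanly when passing $\phi$ through a product of two \emph{distinct} based loops, and (ii) verifying that the overlap of $\delta_1,\delta_2$ is genuinely a maximal common segment (so they are honestly ${\bf q}$-distinguishing) rather than merely sharing a segment of the right length, which is what makes the assignment $\Phi$ compatible across different choices.
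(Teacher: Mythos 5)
Your overall strategy is the same as the paper's: transport distinguished geodesics through $\phi$, recover the overlap length from $l(\overline{\gamma_2*\gamma_1^{-1}})=l(\gamma_1)+l(\gamma_2)-2l(\mathbf{p})$, and use distance minimizers (distinguished by Corollary \ref{dist-min-are-disting}) together with the symmetric construction for $\phi^{-1}$ to upgrade a distance-non-increasing map to an isometry. Your second paragraph is essentially the paper's Claims 1 and 2, and your closing argument is the paper's. But there is a genuine gap at the step you yourself flag as the main obstacle: well-definedness. Two separate things are needed there, and your sketch proves neither. First, for a \emph{fixed} distinguished geodesic $\mathbf{p}$, the image segment must be independent of the choice of $\mathbf{p}$-distinguishing pair; the hard case is two pairs $(\gamma_1,\gamma_2)$ and $(\eta_1,\eta_2)$ with no cross-pair $\mathbf{p}$-distinguishing, where one must show the conjugating tail $\alpha$ in $\overline{\Phi(\eta_1)}=\alpha^{-1}*\hat\eta_1*\alpha$ is trivial by playing the overlap lengths of $\gamma_1\cap\eta_1$ and $\gamma_2\cap\eta_1$ against each other. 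Second --- and this is what your definition of $\Phi(p)$ as ``the initial point of the image of \emph{some} distinguished geodesic at $p$'' actually requires --- two \emph{different} distinguished geodesics originating at $p$ must have images originating at the \emph{same} point of $X_2$. That is a statement about preservation of incidence of paths, and the paper spends the bulk of the proof on it: a three-case analysis (image overlaps disjoint, overlapping, or nested along the common loop), each case ruled out by comparing $l_1(\overline{\gamma_2^{-1}*\gamma*\gamma_1^{-1}})$ with the length of the corresponding configuration in $X_2$. Your phrase ``this triangle data pins down the image point uniquely'' asserts the conclusion of these arguments rather than supplying them; without them $\Phi$ is not known to be single-valued, and the isometry step (which needs the image of a distance minimizer from $p$ to $p'$ to run from $\Phi(p)$ to $\Phi(p')$ however those points were defined) does not go through.

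A smaller but real issue is your ``arranging a common basepoint.'' This is not a free normalization: the reduced representatives of $\phi[\gamma_1]$ and $\phi[\gamma_2]$ have the form $\alpha^{-1}*\eta_1*\alpha$ and $\beta^{-1}*\eta_2*\beta$ with a priori unrelated tails $\alpha,\beta$, and one must \emph{prove} that one tail is the other extended along an $\eta_i$ --- otherwise $\overline{\Phi(\gamma_2)*\Phi(\gamma_1)^{-1}}$ would be strictly longer than the marked length spectrum permits. Only after that can the basepoint be placed on both geodesic loops simultaneously so that Lemma \ref{concatenation} applies in the way you intend.
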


\begin{proof}
We start out by defining a length preserving map from $\mathcal D(X_1)$ to $\mathcal D(X_2)$, where we recall that $\mathcal D(X)$ denotes the set of distinguished geodesics in $X$.
Let ${\bf p}\in \mathcal D(X_1)$ be given. Then by definition, there exists a pair of ${\bf p}$-distinguishing geodesic loops; call them $\gamma_1$ and $\gamma_2$. Without loss of generality, we can assume the base point $p_1$ for $\pi_1(X_1,p_1)$ is the common vertex $\gamma_i(0)$. Corresponding to the homomorphism $\Phi:\pi_1(X_1,p_1)\longrightarrow \pi_1(X_2,p_2)$, we can find a pair of closed geodesic paths $\Phi(\gamma_1)$ and $\Phi(\gamma_2)$ (i.e. reduced paths, but not necessarily cyclically reduced) based at $p_2 \in X_2$ having precisely the same lengths of minimal representatives in their free homotopy class. We will use an overline to denote the geodesic loop (i.e. cyclically reduced loop) in the free homotopy class of a loop.  Observe that by our choice of $\gamma_1$, $\gamma_2$ being ${\bf p}$-distinguishing, we have that: 
$$l_1(\overline{\gamma_2*\gamma_1^{-1}})=l_1(\gamma_1)+l_1(\gamma_2)-2l_1({\bf p}).$$ 
Furthermore, since the isomorphism preserves the marked length spectrum, we have that 
$l_2(\overline{\Phi(\gamma_i)})=l_1(\gamma_i)$ and $l_2(\overline{\Phi(\gamma_2)*\Phi(\gamma_1^{-1})})= l_1( \overline{\gamma_2*\gamma_1^{-1}})$.
Let $\overline{\Phi(\gamma_i)}= \eta_i$, so in particular, by Corollary \ref{redloop}, we have that $\Phi(\gamma_1)=\alpha^{-1}* \eta_1 *\alpha$ and $\Phi(\gamma_2)=\beta^{-1}* \eta_2* \beta$ where $\alpha, \beta$ are geodesic paths in $X_2$.

\begin{figure}
\label{graph2}
\begin{center}
\includegraphics[width=3.5in]{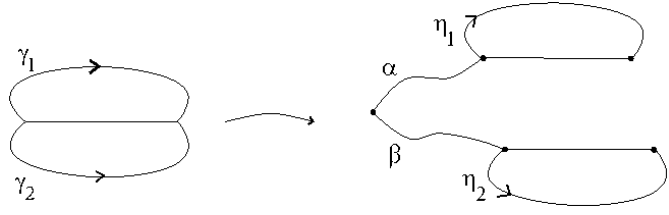}
\caption{Initial segments forced to agree: minimal representative of composite curve in first picture has shorter length than corresponding one in the second picture.}\label{fig1}
\end{center}
\end{figure}

\begin{Claim}
Using the notation from the previous paragraph, we must have: $\alpha = \mathbf{n}_2*\beta$ or $\beta=\mathbf{n}_1*\alpha$, where $\mathbf{n}_i$ is a sub-path of $\eta_i$.
\end{Claim}

Consider the path $\Phi(\gamma_2)*\Phi(\gamma_1^{-1}) = \beta^{-1} *\eta_2*\beta*\alpha^{-1}*\eta_1^{-1}*\alpha$.  Unless the concatenation $\beta*\alpha^{-1}$ completely reduces and eliminates some portion of $\eta_i$, we have the inequalities:

$$l_2(\eta_1)+l_2(\eta_2)< l_2(\overline{\Phi(\gamma_2)*\Phi(\gamma_1^{-1})})=l_1( \overline{ \gamma_2*\gamma_1^{-1}})<l_1(\gamma_1)+l_1(\gamma_2).$$  
But we have by the marked length spectrum being preserved, and the definition of $\eta_i$, that
$l_2(\eta_i)=l_2(\overline{\Phi(\gamma_i)})=l_1(\gamma_i)$ which gives us a contradiction (see Figure \ref{fig1} for an illustration of this phenomena). Denote by $q\in X_2$ the endpoint of whichever of $\alpha$ or $\beta$ contains the other.  We can, without loss of generality, assume that $p_2=q$ (by taking a change of basepoint for $\pi_1(X_2)$ if necessary).  So we have reduced to the image being a pair of geodesic loops $\eta_1$ and $\eta_2$ based at $q$.

\begin{Claim}
The geodesic loops $\eta_1$ and $\eta_2$ intersect in a path of length precisely $l_1({\bf p})$ passing through the point $q$ (i.e. $\eta_1([0,l_1({\bf p})])= \eta_2([0,l_1({\bf p})])$, but no such relationship holds for any larger interval).
\end{Claim}

In order to see this, let us assume that we can write $\eta_i= \mathbf{\sigma_i* \nu}$, where $\bf{\nu}$ is a path corresponding to the largest interval $[0,r]$ satisfying $\eta_1([0,r])=\eta_2([0,r])$, and $\sigma_i$ is the path $\eta_i([r,l_1(\gamma_i)])$ (in other words, ${\bf \nu}$ is the longest path along which the two images curves agree, and $\sigma_i$ is the rest of the respective curves).  We claim that $l_2({\bf \nu})=l_1({\bf p})$.

By our choice of $\gamma_1, \gamma_2$ being ${\bf p}$-distinguishing, we have the relation:

\begin{equation}\label{2.1} 
	2l_1({\bf p})=l_1(\gamma_1)+l_1(\gamma_2)-l_1(\overline{\gamma_2* \gamma_1^{-1}})
\end{equation}
Since we have that $\eta_i$ are the geodesic loops in the free homotopy class of $\Phi(\gamma_i)$, and as our isomorphism preserves lengths, we have that:

\begin{equation}\label{2.2} 
	l_2(\eta_i)=l_1(\gamma_i).
\end{equation}
Furthermore, the composite $\gamma_2* \gamma_1^{-1}$ corresponds to the composite $\eta_2* \eta_1^{-1}$, which forces the equality $l_1(\overline{\gamma_2* \gamma_1^{-1}})=l_2(\overline{\eta_2*
\eta_1^{-1}})$, and the latter is freely homotopic to the geodesic loop $\sigma_2*\sigma_1^{-1}$. This gives us that:

\begin{equation}\label{2.3} 
	l_1(\overline{\gamma_1*\gamma_2^{-1}})=l_2(\overline{\eta_1* \eta_2^{-1}})= l_2(\eta_1)+l_2(\eta_2)-2l_2({\bf \nu}).
\end{equation}
Substituting equations (\ref{2.2}) and (\ref{2.3}) into equation (\ref{2.1}), we obtain $2l_1({\bf p})=2l_2(\nu)$ which immediately gives us the desired equality.

\vskip 5pt

We denote the path in $X_2$ identified in this way by $\Phi^{(\gamma_1, \gamma_2)}\mathbf{p}$, in order to emphasize the dependence on the pair of ${\bf p}$-distinguishing loops $(\gamma_1, \gamma_2)$. Note that we clearly have that $\Phi^{(\gamma_1, \gamma_2)}\mathbf{p}$ lies in $\mathcal D(X_2)$, as the pair of loops $(\eta_1, \eta_2)$ are $\big(\Phi^{(\gamma_1, \gamma_2)}\mathbf{p}\big)$-distinguishing.

\vskip 5pt

\begin{Claim}
The path $\Phi^{(\gamma_1, \gamma_2)}\mathbf{p}$ is independent of the choice of ${\bf p}$-distinguishing loops. 
\end{Claim}

We have two possibilities, one of which is immediate: let $\gamma_1,\gamma_2,\gamma_3$ be geodesics based at $p_1$ an endpoint of ${\bf p}$ which are pairwise ${\bf p}$-distinguishing. Then the three image geodesic loops $\overline{\Phi (\gamma_1)}, \overline{\Phi (\gamma_2)}, \overline{\Phi (\gamma_3)}$ are all based at $q$, and pairwise  have the property that $\overline{\Phi (\gamma_i)}|_{[0,l_1({\bf p})]} =\overline{\Phi (\gamma_j)}|_{[0,l_1({\bf p})]} $. It is now immediate that all three of $\Phi^{(\gamma_i, \gamma_j)}\mathbf{p}$ must coincide.

\begin{figure}
\label{graph2.4}
\begin{center}
\includegraphics[width=3.5in]{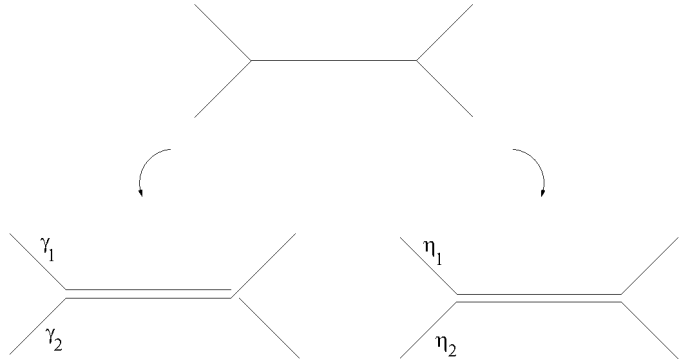}
\caption{Two pairs of ${\bf p}$-distinguishing curves with no cross-pair $\mathbf{p}$-distinguishing.}\label{fig2}
\end{center}
\end{figure}

The other possibility to account for occurs if we have two distinct pairs $(\gamma_1,\gamma_2)$ and $(\eta_1,\eta_2)$ of ${\bf p}$-distinguishing geodesics, but none of the pairs $(\gamma_i,\eta_j)$ are ${\bf p}$-distinguishing.  Since all four geodesics pass through ${\bf p}$, this means that the intersections
$\mathbf{p}_{i,j}:=\gamma_i\cap\eta_j$ are all geodesic segments which extend the original ${\bf p}$ (see Figure \ref{fig2} for an illustration of two such pairs near the geodesic).  In fact, this immediately forces the geodesic loops to have a local picture near ${\bf p}$ as in Figure \ref{fig2}.

Consider first $\overline{\Phi(\eta_1)*\Phi(\gamma_1^{-1})}$, the loop used as in Claim 2 to find $\Phi^{(\gamma_1, \eta_1)}\mathbf{p_{1,1}}$.  By Claim 1 (after fixing the basepoint for $\pi_1(X_2)$ to be the initial point of $\overline{\Phi(\gamma_1)}$) we must have $\overline{\Phi(\eta_1)}=\alpha^{-1}*\hat \eta_1*\alpha$ for a geodesic loop $\hat\eta_1$ and a geodesic $\alpha$ which lies along $\overline{\Phi(\gamma_1)}$.  By Claim 2, $\hat\eta_1$ intersects $\overline{\Phi(\gamma_1)}$ in a segment of length $l(\mathbf{p}_{1,1})$; note that if $\alpha$ is non-trivial, $\hat \eta_1$ must begin by following $\overline{\Phi(\gamma_1)}$ for a distance precisely $l(\mathbf{p}_{1,1})$.

Now undertake the same considerations for $\overline{\Phi(\eta_1)*\Phi(\gamma_2^{-1})}$.  We can maintain the same basepoint, and we know by Claim 1 that $\alpha$ lies along $\overline{\Phi(\gamma_2)}$ as well, hence is a sub-path of $\Phi^{(\gamma_1, \gamma_2)}\mathbf{p}$.  If $\alpha$ is non-trivial, $\hat \eta_1$ must begin by following $\overline{\Phi(\gamma_2)}$ for a distance precisely $l(\mathbf{p}_{2,1})$.  This however, contradicts the corresponding fact noted at the end of the previous paragraph, as $\mathbf{p}_{1,1}$ and $\mathbf{p}_{2,1}$ are both strictly longer than $\Phi^{(\gamma_1, \gamma_2)}\mathbf{p}$ and $\overline{\Phi(\gamma_1)}$ branches from $\overline{\Phi(\gamma_2)}$ after traversing $\Phi^{(\gamma_1, \gamma_2)}\mathbf{p}$. (See Figure \ref{fig5}.)

We conclude then that $\alpha$ is trivial, i.e. that $\overline{\Phi(\eta_1)}$ is a geodesic loop based at $p_2$.  Because of the branching of $\overline{\Phi(\gamma_1)}$ and $\overline{\Phi(\gamma_2)}$ at the end points of $\Phi^{(\gamma_1, \gamma_2)}\mathbf{p}$, $\Phi^{(\gamma_1, \eta_1)}\mathbf{p}_{1,1}$ and $\Phi^{(\gamma_2, \eta_1)}\mathbf{p}_{2,1}$ must be geodesic segments that extend $\Phi^{(\gamma_1, \gamma_2)}\mathbf{p}$ on opposite sides.  Applying the same arguments with $\eta_2$ replacing $\eta_1$ we see that $\overline{\Phi(\eta_1)}$ and $\overline{\Phi(\eta_2)}$ agree at least along $\Phi^{(\gamma_1, \gamma_2)}\mathbf{p}$.  But by Claim 2, this is the largest segment they can agree along, hence $\Phi^{(\gamma_1, \gamma_2)}\mathbf{p}=\Phi^{(\eta_1, \eta_2)}\mathbf{p}$, establishing Claim 3.

\begin{figure}
\begin{center}
\includegraphics[width=3.5in]{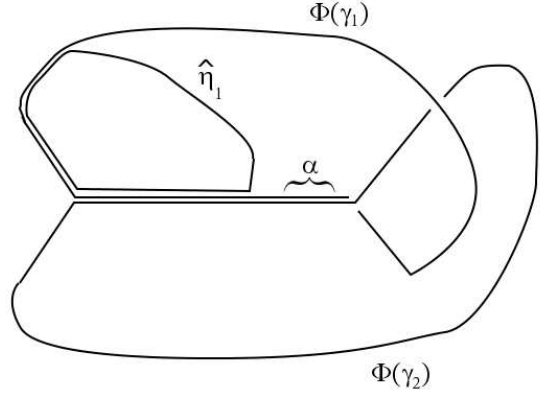}
\caption{An arrangement ruled out by the proof of Claim 3}\label{fig5}
\end{center}
\end{figure}

\vskip 10pt

We have established that the map $\Phi^{(\gamma_1, \gamma_2)}\mathbf{p}$ depends solely on the distinguished path ${\bf p}$, and not on the choice of $\mathbf{p}$-distinguishing loops $(\gamma_1, \gamma_2)$. We can now suppress the superscript, and simply write $\Phi {\bf p}$ for the image path. This gives us a well-defined map $\Phi: \mathcal D(X_1) \rightarrow D(X_2)$.

\vskip 5pt

\begin{Claim}
The map $\Phi: \mathcal D(X_1) \rightarrow D(X_2)$ is bijective.
\end{Claim}

Applying the same procedure to the inverse group homomorphism $\pi_1(X_2) \rightarrow \pi_1(X_1)$ yields a corresponding map $\Psi: \mathcal D(X_2) \rightarrow \mathcal D(X_1)$. We verify that $\Psi \circ \Phi$ is the identity map on $\mathcal D(X_1)$. Let ${\bf p}\in \mathcal D(X_1)$ be a distinguished path, and $(\gamma_1, \gamma_2)$ a pair of ${\bf p}$-distinguishing loops. From our construction, the image path $\Phi{\bf p} \in \mathcal D(X_2)$ naturally comes equipped with a pair of $\Phi{\bf p}$-distinguishing loops $(\eta_1, \eta_2)$. Recall from the argument in Claim 2 how each $\eta_i$ is obtained: we start with the corresponding $\gamma_i$, viewed as an element in $\pi_1(X_1, p)$, and use the isomorphism between fundamental groups to obtain a corresponding element in $\pi_1(X_2,q)$ (where for simplicity $p,q$ are taken to be the initial points of ${\bf p}$ and $\Phi{\bf p}$ respectively). Then $\eta_i$ is the cyclically reduced loop in the free homotopy class represented by the image element in $\pi_1(X_2, q)$. 

Reversing the argument, we see that if we start with ${\bf q}:=\Phi{\bf p} \in \mathcal D(X_2)$ and pick $(\eta_1, \eta_2)$ as the pair of ${\bf q}$-distinguishing loops, then the image $\Psi{\bf q}$ is identified via the pair $(\gamma_1, \gamma_2)$ of geodesic loops, and hence coincides with ${\bf p}$. This verifies that $\Psi \circ \Phi$ is the identity on $\mathcal D(X_1)$, and hence that $\Phi$ is injective. But an identical argument shows that $\Phi \circ \Psi$ is the identity on $\mathcal D(X_2)$, establishing that $\Phi$ is also surjective. 

\begin{Claim}
Let ${\bf p_1}, {\bf p_2} \in \mathcal D(X_1)$ be a pair of geodesically incident (see Definition \ref{geod-incident}) distinguished paths in $X_1$, meeting at a common vertex $q$ which we will take as the basepoint for $\pi_1(X_1)$. Then the corresponding pair of geodesic paths $\Phi{\bf p_1}$ and $\Phi{\bf p_2}$ is also geodesically incident.
\end{Claim}

Without loss of generality (by well-definedness of our map), we can assume that one of the closed loops used to find $\Phi{\bf p_1}, \Phi{\bf p_2} $ passes through both $\bf{p_1}$ and $\bf{p_2}$, hence can be used as one of both pairs of $\bf{p_i}$-distinguishing loops.  We refer to  Figure \ref{fig3} (top left) to illustrate our situation.  In the top left figure, we have a pair of geodesically incident paths, with the big geodesic representing the common loop, and the two smaller ones intersecting the large one in ${\bf p_1}$ and ${\bf p_2}$ respectively. The paths ${\bf p_i}$ are oriented counterclockwise along the common loop $\gamma$, so that ${\bf p_2}$ {\it precedes} ${\bf p_1}$ along $\gamma$.

Now consider the image loops (see Figure \ref{fig3}, remaining three pictures).  If the resulting curves are not incident, we have that the two geodesic loops $\overline{\Phi(\gamma_1)}$ and $\overline{\Phi(\gamma_2)}$, which must
intersect $\overline{\Phi(\gamma)}$ in geodesics segments of length $l_1(\mathbf{p_1}), l_1(\mathbf{p_2})$ respectively, have an intersection which does not represent incident subpaths of the geodesic loop $\overline{\Phi(\gamma)}$.  We have three possible cases, which we label (a), (b), (c).

\begin{figure}\label{graph2.5}
\begin{center}
\includegraphics[width=3.5in]{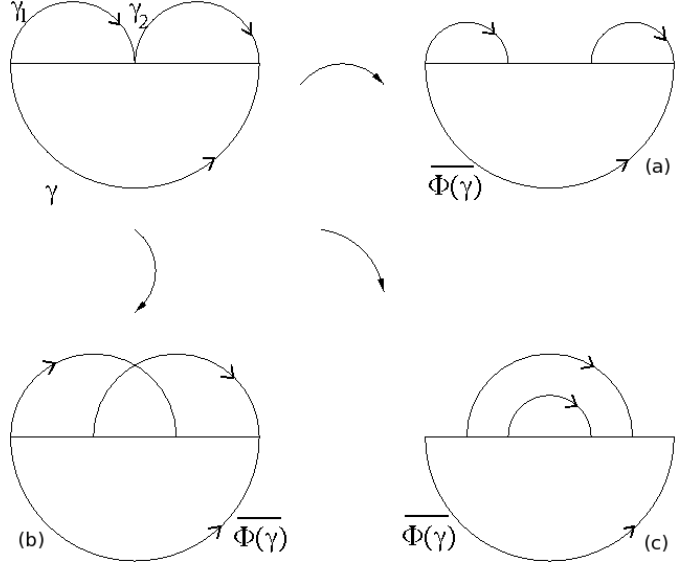}
\caption{Incidence relations are preserved: the three possible cases.}\label{fig3}
\end{center}
\end{figure}

\vskip 10pt

\noindent {\bf Case (a):} First, the intersections with $\gamma$ might be entirely disjoint (as in Figure \ref{fig3}, top right).  Following the construction of Claim 1, we may assume the basepoint $p_2$ for $\pi_1(X_2)$ is on the intersection of $\overline{\Phi(\gamma)}$ and $\overline{\Phi(\gamma_1)}$.  Then $\Phi(\gamma_2)$ has the form $\delta^{-1}*\overline{\Phi(\gamma_2)}*\delta$ for some geodesic starting at $p_2$.  Consider $\overline{\Phi(\gamma_2*\gamma^{-1})}$. Using the fact that the marked length spectrum is preserved, we have the inequalities:
$$l_2\big(\overline{\Phi(\gamma_2*\gamma^{-1})}\big) = l_1(\gamma_2*\gamma^{-1}) <
l_1(\gamma)+l_1(\gamma_2) = l_2(\overline{\Phi(\gamma)})+l_2(\overline{\Phi(\gamma_2)})$$
This forces $\delta$ to lie entirely along $\overline{\Phi(\gamma)}$.  Let $d$ denote the distance along $\delta$ between $\Phi\mathbf{p_1}$ and $\Phi\mathbf{p_2}$ and note that we have:
\[l_2(\overline{ \Phi(\gamma_1)*\Phi(\gamma_2)})=l_2(\overline{\Phi(\gamma_1)})+l_2(\overline{\Phi(\gamma_2)})+2d\]
But observe that the the geodesic loop $\overline{\gamma_1*\gamma_2}$ has length which is bounded above by $l_1(\gamma_1)+l_1(\gamma_2)$. Combined with the fact that the isomorphism preserves the marked length spectrum, this gives us a contradiction if $d>0$.  Thus $\Phi\mathbf{p_1}$ and $\Phi\mathbf{p_2}$ are adjacent on $\overline{\Phi(\gamma)}$.  

Finally, suppose their order is reversed, i.e. $\Phi\mathbf{p_1}$ precedes $\Phi\mathbf{p_2}$ when following $\overline{\Phi(\gamma)}$.  In $X_1$ we have that 
\[l_1(\gamma_2*\gamma_1*\gamma^{-1}) = l_1(\gamma)+l_1(\overline{\gamma_1*\gamma_2})-2l_1(\mathbf{p_1})-2l_1(\mathbf{p_2}).\]
Whereas in $X_2$, the corresponding loop satisfies
\[l_2(\overline{\Phi(\gamma_2)*\Phi(\gamma_1)*\Phi(\gamma^{-1})}) = l_2(\overline{\Phi(\gamma)})+l_2(\overline{\Phi(\gamma_1)})+l_2(\overline{\Phi(\gamma_2)}),\]
a strictly greater length, providing a contradiction.  We conclude that the paths $\Phi\mathbf{p_1}$ and $\Phi\mathbf{p_2}$ are geodesically incident in $X_2$ in the same order that they are in $X_1$.

\vskip 10pt

\noindent {\bf Case (b):}
The second possibility is that $\overline{\Phi(\gamma_1)}$ and $\overline{\Phi(\gamma_2)}$ intersect in a subinterval of the geodesic loop $\overline{\Phi(\gamma)}$ of length $d$ (as in Figure \ref{fig3}, bottom left).  As above, we may place the basepoint for $\pi_1(X_2)$ on $\overline{\Phi(\gamma_1)}$ and write $\Phi(\gamma_2) = \alpha^{-1}*\overline{\Phi(\gamma_2)}*\alpha$ with $\alpha$ a geodesic along $\overline{\Phi(\gamma)}$.  We consider the geodesic loop corresponding to  $\gamma_2^{-1}*\gamma*\gamma_1^{-1}$, and observe that it has length:

\begin{equation}
\label{2.4} l_1(\overline{\gamma_2^{-1}*\gamma*\gamma_1^{-1}}) = l_1(\gamma)+ l_1(\overline{\gamma_2*\gamma_1})-2l_1({\bf p_1})-2l_1({\bf p_2})
\end{equation}
Looking at the corresponding geodesic loop $\overline{\Phi(\gamma_2^{-1})*\Phi(\gamma)*\Phi(\gamma_1^{-1})}$ in the image, we find that it has length:

\[l_2(\Phi(\gamma_1))+ l_2(\Phi(\gamma_2)) +l_2(\Phi(\gamma)) -2l_2(\Phi{\bf p_1})-2l_2(\Phi{\bf p_2})+2d\]
if $\overline{\Phi(\gamma_2)}$ lies to the right of $\overline{\Phi(\gamma_1)}$ or

\[l_2(\Phi(\gamma_1))+ l_2(\Phi(\gamma_2)) +l_2(\Phi(\gamma)) -2d\]
if $\overline{\Phi(\gamma_2)}$ lies to the left of $\overline{\Phi(\gamma_1)}$.  In either case, using that the isomorphism preserves the marked length spectrum, and comparing with equation (\ref{2.4}) we get a contradiction (in the second case, because $d<l_1(\mathbf{p_1})+l_1(\mathbf{p_2})$). 

\vskip 10pt

\noindent {\bf Case (c):}
Finally, the third possibility is that one of $\Phi\mathbf{p_i}$ lies entirely within the other (Figure \ref{fig3}, bottom right).  First, assume that $\overline {\Phi(\gamma_2)}$ is the small inner loop, while $\overline {\Phi(\gamma_1)}$ is the outer loop (so in particular $\Phi{\bf p_2}$ is a subpath of $\Phi{\bf p_1}$).  If we let $0\leq d<l_2(\Phi{\bf p_1})$ be the distance between the left endpoints of $\Phi{\bf p_i}$, a simple calculation will show that:

\[l_2(\overline{\Phi(\gamma_2^{-1})*\Phi(\gamma)*\Phi(\gamma_1^{-1})})= l_2(\overline{\Phi(\gamma_1)})+ l_2(\overline{\Phi(\gamma)})+l_2(\overline{\Phi(\gamma_2)})- 2d-2l_2(\Phi{\bf p_2})\]

\noindent which we can compare with the expression in equation (\ref{2.4}) to again obtain a contradiction.  If $\overline{\Phi(\gamma_1)}$ is the inner loop, let $0\leq d <l_2(\Phi\mathbf{p_2})$ be the distance between the right endpoints of $\Phi(\mathbf{p_i})$.  We calculate:

\[l_2(\overline{\Phi(\gamma_2^{-1})*\Phi(\gamma)*\Phi(\gamma_1^{-1})})= l_2(\overline{\Phi(\gamma_1)})+ l_2(\overline{\Phi(\gamma)})+l_2(\overline{\Phi(\gamma_2)})- 2d-2l_2(\Phi{\bf p_1})\]
providing the same contradiction.

\vskip 10pt

This gives us that the image paths $\Phi {\bf p_1}$ and $\Phi {\bf p_2}$ are subpaths of the geodesic loop $\gamma$ which agree at one endpoint, but not in any larger neighborhood of the endpoint.  Since $\gamma$ is cyclically reduced, this immediately forces the concatenation $\Phi {\bf p_1}*\Phi {\bf p_2}$
to be a geodesic path, hence geodesically incident paths map to geodesically incident paths. It is now clear that if ${\bf p}={\bf q_2}*{\bf q_1}$ is a geodesic path written as a concatenation of subpaths (all in $\mathcal D(X_1)$), then $\Phi{\bf p}=\Phi{\bf q_2}*\Phi{\bf q_1}$ (since the ${\bf q_i}$ are geodesically incident).

\begin{Claim}
Let ${\bf p_1}$ and ${\bf p_2}$ be a pair of incident geodesic paths in $\mathcal D(X_1)$, meeting at a common vertex $q$ which we will take as the basepoint for $\pi_1(X)$. Then the corresponding geodesic paths $\Phi{\bf p_1}$ and $\Phi{\bf p_2}$ are also incident.
\end{Claim}

To see this, note that if the incident paths are {\it geodesically} incident, we are done by the previous claim.  So let us assume not.  Then by Lemma \ref{concatenation}, we have that the reduced path corresponding to the concatenation ${\bf p_2}*{\bf p_1}$ is of the form ${\bf r_2}*{\bf r_1}$, where ${\bf r_i}$ is a subpath of ${\bf p_i}$.  Furthermore, ${\bf p_2}={\bf r_2}*{\bf q}^{-1}$, while ${\bf p_1}={\bf q}*{\bf r_1}$. We now argue that the geodesics ${\bf q}, {\bf r_1}, {\bf r_2}$ all lie in $\mathcal D(X_1)$. Indeed, assume that $(\gamma_1, \gamma_2)$ is a ${\bf p_1}$-distinguishing pair, and $(\eta_1, \eta_2)$ is a ${\bf p_2}$-distinguishing pair. Then we can immediately write out distinguishing pairs as follows:


\begin{itemize}
	\item a distinguishing pair for ${\bf q}$ is given by $(\eta_1^{-1},\overline{\gamma_1 * \eta_2^{-1}})$.
	\item a distinguishing pair for ${\bf r_1}$ is given by either (a) $(\gamma_1, \overline{\gamma_2*\eta_1})$ if $\gamma_2*\eta_1$ is not reducible as a concatenation of paths based at $q$ (it is cyclically reducible), or (b) $(\gamma_1, \overline{\gamma_2*\eta_2})$ if $\gamma_2*\eta_2$ is not reducible as a concatenation of paths based at $q$.  (Note that these are mutually exclusive options.)
	
	\item a distinguishing pair for ${\bf r_2}$ is given by either (a) $(\eta_1, \overline{\gamma_1*\eta_2})$ if $\gamma_1*\eta_2$ is not reducible as a concatenation of paths based at $q$, or (b) $(\eta_1, \overline{\gamma_2*\eta_2})$ if $\gamma_2*\eta_2$ is not reducible as a concatenation of paths based at $q$. (Again, these are mutually exclusive options.)
\end{itemize}

As usual, the overline means one should cyclically reduce. Above we have been somewhat cavalier with the starting points of the various loops.  They should all be reparametrized so that they are based at the appropriate points (initial point of ${\bf q}$, ${\bf r_1}$, ${\bf r_2}$ respectively), by shifting the basepoint along the paths ${\bf q}$, ${\bf r_1}$, and/or ${\bf r_2}$ as needed.

Since ${\bf q}$ is geodesically incident to both ${\bf r_1, r_2}$, the previous claim implies that $\Phi{\bf p_2}=\Phi{\bf r_2}*\Phi{\bf q}^{-1}$ and $\Phi{\bf p_1}=\Phi{\bf q}*\Phi{\bf r_1}$.  However, reversal of paths is preserved under the map $\Phi$ we have constructed (since we can take the same pair of distinguishing loops with reversed orientations).  This immediately yields our last claim.

\vskip 5pt

We can now complete the proof of the Proposition. We define a map $f:\mathcal B(X_1) \rightarrow B(X_2)$
 as follows: given a point $x\in \mathcal B(X_1)$, we consider the subset $\mathcal D(x) \subset \mathcal D(X_1)$ consisting of all distinguished paths which originate at $x$. Corollary \ref{disting-from-branch-point} guarantees that $\mathcal D(x)\neq \emptyset$. We can apply the map $\Phi$ to all the elements in $\mathcal D(x)$, obtaining a subset $\Phi\big( \mathcal D(x) \big)$ of $\mathcal D(X_2)$. In view of Claim 6, all the distinguished paths given by the elements $\Phi\big( \mathcal D(x) \big)$ originate at the same point in $\mathcal B(X_2)$. We define this point to be $f(x)$, i.e. $f(x)\in X_2$ is the unique point with the property $\Phi\big( \mathcal D(x) \big) \subset \mathcal D(f(x))$. 
 
Next we argue that the map $f$ is distance non-increasing. If $x\neq y$ are a pair of distinct points in $\mathcal B(X_1)$, we let ${\bf p}$ denote a distance minimizer from $x$ to $y$. Corollary \ref{dist-min-are-disting} tells us that ${\bf p}, {\bf p}^{-1} \in \mathcal D(X_1)$. This gives us elements ${\bf p}\in \mathcal D(x)$ and ${\bf p}^{-1} \in \mathcal D(y)$, so by definition of the map $f$, the image path $\Phi {\bf p}$ is a geodesic path originating at $f(x)$ and ending at $f(y)$. But from Claim 2, we know that the map $\Phi$ preserves the length of paths. We conclude that 
$$d\big(f(x), f(y)\big) \leq l_2\big(\Phi {\bf p}\big) = l_1({\bf p}) = d(x,y)$$

Applying the same argument to the reverse isomorphism $\pi_1(X_2)\rightarrow \pi_1(X_1)$, we have the inverse map $\Psi:\mathcal D(X_2)\rightarrow \mathcal D(X_1)$. Applying the construction described above, we obtain an induced map $g: \mathcal B(X_2)\rightarrow B(X_1)$. The argument in the previous paragraph tells us that $g$ is also distance non-increasing, and by construction, we have that $f, g$ are inverse maps of each other (compare with the discussion in Claim 4). Composing the two maps, we obtain
$$d(x,y) = d\big(g(f(x)), g(f(y))\big) \leq d(f(x), f(y)) \leq d(x,y)$$
Hence all the inequalities are actually equalities, and the maps $f, g$ are isometries. This completes the proof of Proposition \ref{isombranch}
\end{proof}

We now prove the main theorem of this section:

\begin{proof}[Proof of Theorem \ref{special-case}]
Consider the set $\mathcal B(X_1)$. If this set is empty, then Proposition \ref{isombranch} forces the set $\mathcal B(X_2)$ to likewise be empty. Lemma \ref{circle} tells us that each $X_i$ is isometric to a circle $S^1$ of some radius $r_i$. The function $l_i$, applied to one of the generators of $\pi_1(X_i) \cong \mathbb Z$ evaluates to $2\pi r_i$. Since the marked length spectrum is preserved, we conclude $2\pi r_1= 2\pi r_2$, and the two circles are isometric.

So now we may assume that $\mathcal B(X_1)\neq \emptyset$. By Proposition \ref{isombranch}, we already know that there is an isometry $f: \mathcal B(X_1)\rightarrow \mathcal B(X_2)$ between the sets of branch points.  All we need to show is that we can extend this isometry to a global isometry. Note that, since we are working with compact (hence complete) metric spaces, an isometry between the subsets $\mathcal B(X_i)$ extends to an isometry $f: \overline{\mathcal B(X_1)}\rightarrow \overline{\mathcal B(X_2)}$ between their closures.  We are left with extending our map to points that do not lie in the closure of the branch points.

Let $p\in X_1\setminus \overline{\mathcal B(X_1)}$ lie outside of the closure of the branching locus. We know $p$ lies in a connected component $W$ of $X_1\setminus \overline{\mathcal B(X_1)}$. From Lemma \ref{components-comp-branch}, $W$  equipped with its intrinsic geodesic structure is isometric to an open interval of some finite length $r$, with endpoints lying in the set $\overline{\mathcal B(X_1)}$. Let $x, y \in \overline{\mathcal B(X_1)}$ be the two points to which the interval $W$ gets attached. There are three possibilities, according to whether (i) $x, y \in \mathcal B(X_1)$, (ii) exactly one of the two points $x, y$ lies in $\overline{\mathcal B(X_1)} \setminus \mathcal B(X_1)$, or (iii) both points $x, y$ lie in $\overline{\mathcal B(X_1)} \setminus \mathcal B(X_1)$.  We present a proof that works in all three cases, though in cases (i) and (ii) it can be simplified.

First, extend $W$ to a slightly larger, yet still embedded, geodesic $W'$ whose endpoints $x_1$ and $y_1$ lie in $\mathcal{B}(X_1)$. (In case (i) or (ii) the extension to both or one side, respectively, can be trivial, with corresponding simplifications in the argument to follow.)  By Lemma \ref{extending-W}, there exist branch points $x_i$ and $y_i$ on $W'$ with $x_i \to x$, $y_i \to y$.  Let $W_i$ denote the subpath of $W'$ connecting $x_i$ and $y_i$.  As the $W_i$ are embedded geodesic paths, we may use the construction of Proposition \ref{isombranch} to find corresponding paths $\Phi_{W_i}$ in $X_2$ with lengths equal to $l(W_i)$ connecting $f(x_i)$ and $f(y_i)$.  By continuity of $f$ defined on $\overline{\mathcal{B}(X_1)}$, $f(x_i)\to f(x)$ and $f(y_i)\to f(y)$.  In addition, it is clear from the construction of Proposition \ref{isombranch} that $\Phi_{W_{i+1}} \subseteq \Phi_{W_i}$.  Therefore, the paths $\Phi_{W_i}$ converge to a path $W^*$ in $X_2$ of length $r$ connecting $f(x)$ and $f(y)$.  By Lemma \ref{paths-comp-branch}, $W^*$ coincides with one connected component of $X_2\backslash \overline{\mathcal{B}(X_2)}$ and we can extend the map $f$ to $W$ by isometrically sending $W$ to $W^*$. 

To ensure that this extension of $f$ is well-defined, consider any other geodesic extension of $W$, together with any other choice of the points $x_i, y_i$.  Regardless of these choices, we will still have that $f(x_i)\to f(x)$ and $f(y_i)\to f(y)$.  Then Corollary \ref{unique-comp-attached} ensures that we obtain the same $W^*$ (in the case where both endpoints are in $\mathcal B(X_1)$, use Claim 3 in the proof of Proposition \ref{isombranch} instead).  Therefore, the extension of $f$ is well-defined, and it is straightforward to check that the extension is still an isometry onto its image. 


The map $f$ we have constructed is an isometric embedding from $Y_1$ into $Y_2$.  But note that we can apply the same  construction to $\phi^{-1}$, yielding an isometric embedding from $Y_2$ into $Y_1$.  Furthermore, the composite of the two maps corresponds to the map from $Y_1$ to itself obtained by applying this construction to the identity isomorphism and hence must be the identity map on $Y_1$.  This implies the map is an isometry from $Y_1$ to $Y_2$.  By the naturality of the construction, we see that the map we constructed induces the isomorphism (up to change of basepoint) between the $\pi_1(X_i)$.  This completes the proof of Theorem \ref{special-case}.

\end{proof}

As discussed at the beginning of this section, our {\bf Main Theorem} now follows immediately from the special case established in Theorem \ref{special-case}.

%

\bibliographystyle{alpha}
\bibliography{1dmls}

\end{document}